\newcommand{\ZZ}{\mathbb{Z}}
\newcommand{\TT}{\mathbb{T}} 
\newcommand{\cC}{\mathcal{C}} 
\newcommand{\cE}{\mathcal{E}} 
\newcommand{\cF}{\mathcal{F}} 
\newcommand{\cO}{\mathcal{O}} 
\newcommand{\cP}{\mathcal{P}} 
\newcommand{\cS}{\mathcal{S}} 
\newcommand{\cU}{\mathcal{U}} 
\newcommand{\cV}{\mathcal{V}} 
\newcommand{\schemes}{Sch}
\newcommand{\sch}[1]{{\schemes/{#1}}} 
\newcommand{\schx}{\sch{X}} 
\newcommand{\schy}{\sch{Y}} 
\newcommand{\PP}{\mathbb{P}} 
\newcommand{\Hom}{{\mathrm{Hom}}}
\newcommand{\set}{\mathbf{Set}} 
\newcommand{\vbundle}[1]{{\cP({#1})}} 
\newcommand{\bva}{\mathbf{V}} 
\newcommand{\lbdl}{\mathbf{L}} 
\newcommand{\quadmat}[4]{\left(\begin{matrix}#1&#2\\#3&#4\\\end{matrix}\right)}
\newcommand{\ZAR}[1]{(\sch{#1})_{Zar}} 
\newcommand{\zar}[1]{#1_{zar}} 
\newcommand{\res}[2]{{#1}|_{#2}} 
\newcommand{\bO}{\cO} 
\newcommand{\bOX}[1][]{{\bO_X^{\,#1}}} 
\newcommand{\bOY}[1][]{{\bO_Y^{\,#1}}} 
\newcommand{\bOZ}[1][]{{\bO_Z^{\,#1}}} 
\newcommand{\bOU}[1][]{{\bO_U^{\,#1}}} 
\newcommand{\OX}[1][]{{\cO_X^{\,#1}}} 
\newcommand{\OY}[1][]{{\cO_Y^{\,#1}}} 
\newcommand{\OZ}[1][]{{\cO_Z^{\,#1}}} 
\newcommand{\OU}[1][]{{\cO_U^{\,#1}}} 
\newcommand{\OV}[1][]{{\cO_V^{\,#1}}} 
\newcommand{\Spec}{\mathrm{Spec\,}} 
\newcommand{\Proj}{\mathrm{Proj\,}} 
\newcommand{\noplus}{\:\widetilde\oplus\:} 
\newcommand{\notimes}{\:\widetilde\otimes\:} 
\newcommand{\smap}[2]{{#1}_{[#2]}} 
\title{Standard vector bundles}
\author{Youngsoo Kim}
\email{kimy@mytu.tuskegee.edu}
\address{Department of Mathematics \\ John A. Kenny Hall 70-358\\
Tuskegee University\\Tuskegee \\ AL, 36088 \\ USA}
\keywords{standard vector bundle, strict associativity, strict functoriality, K-theory}
\subjclass{Primary 18F20; Secondary 14J60, 14F05}
\newcommand{\stvecsp}[1]{\mathcal{V}(#1)}
\newcommand{\FresK}{\res F K}
\newcommand{\pair}[3]{(\{{#1}_{#2}\rightarrow {#1}\},\{{#3}_{#2}\})}
\newtheorem{thm}{Theorem}[section]
\newtheorem{lemma}[thm]{Lemma}
\newtheorem{coro}[thm]{Corollary}
\newtheorem{prop}[thm]{Proposition}
\theoremstyle{definition}
\newtheorem{defn}[thm]{Definition}
\newtheorem{rmk}[thm]{Remark}
\newtheorem{example}[thm]{Example}
\begin{document}

\begin{abstract}
We construct the categories of standard vector bundles
over schemes and define direct sum and tensor product.
These categories are equivalent to the usual categories of vector bundles
with additional properties. The tensor product is strictly associative,
strictly commutative with line bundles, and strictly functorial on base change.
\end{abstract}

\maketitle
\tableofcontents

\section{Introduction}
In a category, two objects $V$ and $W$ could be isomorphic without being equal.
We write $V\cong W$ for an isomorphism and $V=W$ for equality.
For example, if $U,V,$ and $W$ are vector spaces over a field,
then $(U\otimes V)\otimes W$ is isomorphic to $U\otimes(V\otimes W)$
but they are not equal if one defines $\otimes$ in traditional ways. 
Also, $V\otimes W$ is isomorphic to $W\otimes V$, 
but they are not equal in general.
The tensor prodcut of vector spaces is commutative and associative in the sense
that $V\otimes W\cong W\otimes V$ and 
$(U\otimes V)\otimes W\cong U\otimes (V\otimes W)$.
But they are not \emph{strictly commutative} nor \emph{strictly associative} 
in the sense that $V\otimes W \neq W\otimes V$ and 
$(U\otimes V)\otimes W\neq U\otimes (V\otimes W)$.

There are instances we want strictness,
and a typical approach is to construct an equivalent category
and define a new tensor product that is equivalent to the old one.
In fact, it is well known that 
every monoidal category is equivalent to a strict monoidal category \cite[XI.5]{Kassel}.
(See also \cite{Schauenburg}.)
For vector spaces and tensor product, the category can be described as follows.
An object is a finite tuple of vector spaces. 
Each such object corresponds to the tensor product of its components in order.
The homomorphisms between two tuples are homomorphisms between tensored vector spaces.
Then one defines the tensor product of tuples by concatenation.
It is straightforward to verify that this approach defines a category that is equivalent 
to the category of vector spaces, and the new tensor product is strictly associative.
But in this category the tensor product is not strictly commutative. It seems that there is no plausible
way to make both associativity and commutativivity strict. 

The least possible is to make tensor product 
strictly commutative with one-dimensional vector spaces,
keeping strict associativity.
This is achieved with the category of \emph{standard} vector spaces.
Suppose $k$ is a field. For each integer $n\ge0$, 
we call $k^n$ the standard vector space of dimension $n$,
and let $e_1,\ldots, e_n$ be its standard basis.
Consider the category of standard vector spaces over $k$.
$$\stvecsp{k} = \{k^n | n \ge 0\} $$
A homomorphism $h:k^m\rightarrow k^n$ is represented by an $n\times m$ matrix
$M_h$ with respect to the standard bases.
For two standard vector spaces, the tensor product is defined by 
$k^{n_1}\otimes k^{n_2}=k^{n_1n_2}$,
and for homomorphisms $h_1$ and $h_2$, their tensor product $h_1\otimes h_2$ 
is defined to be the homomorphism represented by the matrix $M_{h_1}\otimes M_{h_2}$.
This definition involves a choice in ordering basis of $k^{n_1}\otimes k^{n_2}$.
We choose the order as in
$$e_1\otimes e_1, \ldots, e_1\otimes e_{n_2}, e_2\otimes e_1,\ldots,
e_2\otimes e_{n_2}, \ldots\ldots, e_{n_1}\otimes e_1, \ldots, e_{n_1}\otimes e_{n_2}.$$
Now the tensor product of objects in $\stvecsp{k}$ is strictly associative and
strictly commutative by definition.
The tensor product of homomorphisms is strictly associative because
the tensor product of matrices is associative:
$$(M_{h_1}\otimes M_{h_2})\otimes M_{h_3}=M_{h_1}\otimes(M_{h_2}\otimes M_{h_3}).$$
But it is strictly commutative only if one of the associated matrices is  a $1\times 1$ 
(or empty) matrix.
$$M_{h_1}\otimes M_{h_2}=M_{h_2}\otimes M_{h_1} \quad 
\mbox{if $M_{h_1}$ or $M_{h_2}$ is $1\times 1$}.$$
It is not difficult to see that $\stvecsp{k}$ is equivalent to the usual category
of finite dimensional vector spaces over $k$.

In this article, we construct the categories of standard vector bundles over schemes 
and define tensor product of standard vector bundles.
We prove that these categories are equivalent to the usual categories of 
vector bundles over schemes and that the tensor product 
is strictly associative, strictly commutative with line bundles, and strictly functorial on base change. 
(See Theorem \ref{tp} and Theorem \ref{thmvb} for precise statements.)
The construction uses the above idea of standard vector spaces and the notion of big vector bundles \cite[C.4]{FS} originally from Grayson \cite[p.169]{G} for strict functoriality, 
and the concept of presheaves on sieves is used to combine two ideas.

The standard vector bundles are used in the construction of the motivic symmetric ring spectrum representing algebraic $K$-theory in author's thesis. 
They also solve the question posed in \cite[p.846]{FS}, the existence of strictly functorial tensor product for vector bundles, and their discussion on \emph{small} vector bundles are unnecessary now.
The existence of the standard vector bundles could be used in various $K$-theoretic constructions.

I would like to thank Dan Grayson for calling this problem to my attention and having insightful discussions with me. I also thank Oliver R\"ondigs for reviewing the initial manuscript and making useful comments.

\section{Presheaves defined on a sieve}
In this section, we introduce the notion of presheaves defined on a sieve 
and define sheafification and restriction functors. 
Since the article is about strict equalities, 
everything will be defined concretely, 
not using universal properties. 

In order to avoid set-theoretic problems, we restrict our attention to certain small 
categories of schemes. We let $\schemes$ be the small category of schemes 
that is large enough for one's application 
and that contains all open subschemes of all of its objects.
When we mention a scheme, it will be an object of this category.
Suppose $X$ is a scheme. 
We let $\schx$ denote the category of schemes over $X$.

We begin with the review of Grothendieck topology
and sieves from \cite{SGAIII, FAG} to introduce the notations used throught the article. 
Notations and techniques of proofs follow verbatim those in Chapter 2 of \cite{FAG}.

\subsection{Grothendieck topology and sieves}
Suppose $\TT$ is a small category with all fibered products.
A Grothendieck topology on $\TT$ is an assignment
to each object $U$ of a collection of sets of morphisms $\{U_i\rightarrow U\}_{i\in I}$
called coverings of $U$ such that the following conditions are satisfied. 
We will omit the index set $I$ for simpler notations.
\begin{enumerate}
\item If $V\rightarrow U$ is an isomorphism, then $\{V\rightarrow U\}$ is a covering.
\item If $\{U_i\rightarrow U\}$ is a covering and $V\rightarrow U$ is a morphism, then
$\{U_i\times_U V\rightarrow V\}$ is a covering.
\item If $\{U_i\rightarrow U\}$ is a covering and for all $i$, $\{U_{ij}\rightarrow U_i\}$ is
a covering, then the collection of composites $\{U_{ij}\rightarrow U_i\rightarrow U\}$
is a covering.
\end{enumerate}
A category with a Grothendieck topology is called a {\em site}.
We mainly use Zariski sites on a scheme $X$.
\begin{defn}
The \emph{small Zariski site} $X_{zar}$ on a scheme $X$ is the category whose objects are open immersions $U\rightarrow X$
and morphisms are the open immersions $V\rightarrow U$ compatible with the maps to $X$.
A covering on $U$ is a collection of open immersions $\{f_i:U_i\rightarrow U\}$ such that $\bigcup_i f_i(U_i)=U$.
\end{defn}
\begin{defn}
The \emph{big Zariski site} $\ZAR{X}$ on a scheme $X$
is the category $\schx$ of schemes over $X$ where a
covering on an object $Y$ is a collection of open immersions $\{g_i:V_i\rightarrow Y\}$ such that $\bigcup_i g_i(V_i)=Y$.
\end{defn}

A {\em sieve} on an object $U$ of $\TT$ is a subfunctor
of the representable functor $H_U=\Hom_\TT(-,U)$.
Given a sieve $H$ on $U$,
we can associate a full subcategory $\cC_H$
of the comma category $\TT/U$ over $U$ whose objects
are the elements of $H(V)$ where $V$ runs over the objects of $\TT$.
For simpler notations, when we refer to an object $V\xrightarrow f U$ of $\cC_H$,
we will frequently suppress the structure map and simply write $V$.
No confusion should arise unless two structure maps are considered from the same object.
The category $\cC_H$ satisfies the following property.

\begin{prop}\label{svcatprop}
If $V\xrightarrow f U$ is an object of $\cC_H$
and $g:W\rightarrow V$ is any morphism in $\TT$, then the composite $W\xrightarrow{fg} U$
is also an object of $\cC_H$.
\end{prop}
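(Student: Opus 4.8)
The statement is equivalent to the assertion that $fg\in H(W)$ whenever $f\in H(V)$ and $g:W\rightarrow V$ is a morphism in $\TT$, because the objects of $\cC_H$ lying over an object $W$ are by definition precisely the elements of the set $H(W)$. The plan is therefore to deduce this closure property from the sole hypothesis available, namely that $H$ is a subfunctor of the representable presheaf $H_U=\Hom_\TT(-,U)$.

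First I would recall how $H_U$ acts on morphisms: it is contravariant, and for $g:W\rightarrow V$ the induced map $H_U(g):H_U(V)\rightarrow H_U(W)$ is precomposition, sending a morphism $f:V\rightarrow U$ to the composite $fg:W\rightarrow U$. Next I would unwind what it means for $H$ to be a subfunctor. Writing $\iota:H\hookrightarrow H_U$ for the inclusion natural transformation, so that $H(W)\subseteq H_U(W)=\Hom_\TT(W,U)$ for every object $W$, naturality of $\iota$ applied to $g$ gives a commutative square relating $H(g)$ and $H_U(g)$. Chasing $f\in H(V)$ around this square yields $\iota_W\bigl(H(g)(f)\bigr)=H_U(g)\bigl(\iota_V(f)\bigr)=fg$. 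Since $\iota_W$ is an inclusion and $H(g)(f)$ is by construction an element of $H(W)$, this identifies $fg$ with an element of $H(W)$, which is exactly what is required.

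I do not expect any genuine obstacle here: the proposition is a direct transcription of the defining property of a sieve, that it be stable under precomposition by arbitrary morphisms of $\TT$. The only point demanding care is the bookkeeping of variance, that is, making sure the contravariant action of the representable functor is read as precomposition rather than postcomposition. Once this is settled, the conclusion is forced by the functoriality of $H$ alone and uses neither the existence of fibered products in $\TT$ nor any Grothendieck topology.
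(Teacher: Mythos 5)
Your proof is correct and is exactly the argument the paper has in mind: the paper states this proposition without proof, treating it as immediate from the definition of a sieve as a subfunctor of $H_U=\Hom_\TT(-,U)$, and your naturality-square chase is precisely the spelled-out version of that observation. Your closing remark that neither fibered products nor the topology is needed is also accurate.
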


Conversely, given a full subcategory of $\TT/U$
satisfying the property, we can recover the subfunctor $H$ by defining $H(V)$ to be the
collection of morphisms $V\rightarrow U$ in the category.
Thus we identify a sieve with such a subcategory.
Note that the above property implies that the intersection of two sieves is also a sieve.

Given a collection of morphisms $\cU=\{U_i\rightarrow U\}$,
we associate a sieve $H_\cU$ on $U$ by taking
$$H_\cU(V) = \{f:V\rightarrow U \,|\, \mbox{$f$ factors through $U_i\rightarrow U$ for some $i$}\}$$
If $\TT$ is a site, then a sieve $H$ on $U$ is said to {\em belong to $\TT$} if
$H$ contains a sieve $H_\cU$ associated to some covering $\cU$ of $U$ in $\TT$.
It is equivalent to say that $\cC_H$ contains $\cU$.

A covering $\cV=\{V_j\rightarrow U\}$ is said to be a {\em refinement} of $\cU=\{U_i\rightarrow U\}$ if every map $V_j\rightarrow U$ factors through $U_i\rightarrow U$ for some $i$. The condition is equivalent to $H_\cV \subseteq H_\cU$. If $\cU_1=\{U_{1i}\rightarrow U\}$ and $\cU_2=\{U_{2j}\rightarrow U \}$ are coverings of $U$, then let
$\cU_1\times \cU_2=\{U_{1i}\times_U U_{2j} \rightarrow U\}$.
It is a covering of $U$ and is a common refinement of $\cU_1$ and $\cU_2$.

\begin{prop}[2.44 \cite{FAG}]\label{sievecap}
If $H_1$ and $H_2$ are sieves on $U$ belonging to $\TT$, then the intersection
$H_1\cap H_2$ also belongs to $\TT$.
\end{prop}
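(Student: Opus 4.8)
The plan is to manufacture a single covering of $U$, belonging to $\TT$, whose associated sieve is contained in $H_1\cap H_2$; by the definition of belonging, that is precisely what must be shown. (Note that $H_1\cap H_2$ is a sieve by the remark following Proposition \ref{svcatprop}, so the statement makes sense.) First I would unwind the two hypotheses: since $H_1$ belongs to $\TT$, there is a covering $\cU_1=\{U_{1i}\rightarrow U\}$ of $U$ in $\TT$ with $H_{\cU_1}\subseteq H_1$, and similarly a covering $\cU_2=\{U_{2j}\rightarrow U\}$ in $\TT$ with $H_{\cU_2}\subseteq H_2$.

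Next I would pass to the common refinement $\cV=\cU_1\times\cU_2=\{U_{1i}\times_U U_{2j}\rightarrow U\}$ introduced just before the statement. As recorded there, $\cV$ is again a covering of $U$ in $\TT$ and refines both $\cU_1$ and $\cU_2$; concretely this uses axiom (2) to see that pulling back $\cU_1$ along each $U_{2j}\rightarrow U$ is a covering of $U_{2j}$, and then axiom (3) to assemble the composites into a covering of $U$. By the characterization of refinement in terms of sieves, refinement of $\cU_1$ and of $\cU_2$ translates into the inclusions $H_\cV\subseteq H_{\cU_1}$ and $H_\cV\subseteq H_{\cU_2}$.

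Finally, chaining inclusions yields $H_\cV\subseteq H_{\cU_1}\subseteq H_1$ and $H_\cV\subseteq H_{\cU_2}\subseteq H_2$, whence $H_\cV\subseteq H_1\cap H_2$. Thus $H_1\cap H_2$ contains the sieve associated to a covering $\cV$ of $U$ in $\TT$, so $H_1\cap H_2$ belongs to $\TT$. I expect no substantial obstacle, since every ingredient has already been assembled: the only step that warrants a moment of care is confirming that $\cV$ is a bona fide covering in the topology rather than an arbitrary family of morphisms, and this is exactly what the remark preceding the statement supplies.
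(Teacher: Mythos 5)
Your proof is correct and follows exactly the paper's argument: take coverings $\cU_1$, $\cU_2$ with $H_{\cU_1}\subseteq H_1$, $H_{\cU_2}\subseteq H_2$, and observe that the common refinement $\cU_1\times\cU_2$ gives a covering whose associated sieve lies in $H_1\cap H_2$. You merely spell out the inclusions $H_{\cU_1\times\cU_2}\subseteq H_{\cU_1}\cap H_{\cU_2}$ and the covering axioms that the paper leaves implicit.
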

\begin{proof}
Let $\cU_1=\{U_{1i}\rightarrow U\}$ and $\cU_2=\{U_{2j}\rightarrow U \}$
be coverings such that
$H_{\cU_1}\subseteq H_1$ and $H_{\cU_2} \subseteq H_2$.
Then $H_1\cap H_2$ contains $H_{\cU_1\times \cU_2}$
\end{proof}

Suppose $f:Y\rightarrow X$ is a map of schemes, and consider big Zariski sites $\ZAR{X}$ and $\ZAR{Y}$. If $V\in \ZAR{Y}$, $U\in\ZAR{X}$ and $g:V\rightarrow U$ is a map of  schemes such that the following diagram commutes,
$$\xymatrix{
V\ar[r]^g \ar[d]_b & U \ar[d]^a\\
Y \ar[r]_f & X
}$$
then for any sieve $H$ on $U$ the pullback $g^*H$ is defined as a sieve on $V$.
For each $W\in \ZAR{Y}$, which is also an object of $\ZAR{X}$ via $f$, the set $(g^*H)(W)$ is defined to be the set of all maps $W\rightarrow V$ such that its composition with $g$ is an element of $H(W)$.

\begin{prop}
Suppose $f:Y\rightarrow X$ is a map of schemes. If $U\xrightarrow a X$ is in $\sch{X}$, $V\xrightarrow b Y$ is in $\schy$, $g:V\rightarrow U$ is a map of schemes such that $ag=fb$, and $H$ is a sieve on $U$ belonging to $\ZAR{X}$, then $g^*H$ is a sieve on $V$ belonging to $\ZAR{Y}$.
\end{prop}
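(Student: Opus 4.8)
The plan is to prove two things separately: that $g^*H$ is a sieve on $V$, and that it belongs to $\ZAR{Y}$. The first part is essentially formal. Directly from its definition we have $(g^*H)(W)\subseteq\Hom(W,V)$, so $g^*H$ is a subfunctor of $H_V$; by Proposition \ref{svcatprop} it then suffices to check closure under precomposition. So I would take $h\in(g^*H)(W)$, meaning $gh\in H(W)$, together with an arbitrary morphism $k:W'\rightarrow W$ in $\schy$, and observe that $g(hk)=(gh)k$. Since $gh\in H(W)$ and $H$ is itself a sieve, closure of $H$ gives $(gh)k\in H(W')$, whence $hk\in(g^*H)(W')$. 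Along the way one verifies the structure-map compatibilities over $X$ — these use the hypothesis $ag=fb$ to guarantee that each composite really is a morphism in the relevant comma category — but these checks are routine.

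For the substantive part I would begin from the hypothesis that $H$ belongs to $\ZAR{X}$: there is a covering $\cU=\{U_i\rightarrow U\}$ of $U$ with $H_\cU\subseteq H$. The idea is to pull this covering back along $g$. Set $V_i=V\times_U U_i$ and $\cV=\{V_i\rightarrow V\}$. Because each $U_i\rightarrow U$ is an open immersion and open immersions are stable under base change, each $V_i\rightarrow V$ is an open immersion realizing $V_i$ as the open subscheme $g^{-1}(U_i)$ of $V$. Since the $U_i$ cover $U$, taking preimages commutes with unions and yields $\bigcup_i g^{-1}(U_i)=g^{-1}(U)=V$, so $\cV$ is a Zariski covering of $V$; and as each composite $V_i\rightarrow V\rightarrow Y$ makes $V_i$ an object over $Y$, this is a covering in $\ZAR{Y}$.

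It then remains to show $H_\cV\subseteq g^*H$, which exhibits $g^*H$ as belonging to $\ZAR{Y}$. I would take any $h\in H_\cV(W)$, so that $h:W\rightarrow V$ factors as $W\xrightarrow{\tilde h}V_i\rightarrow V$ for some $i$. By the universal property of the fibered product $V_i=V\times_U U_i$, the composite $V_i\rightarrow V\xrightarrow{g}U$ coincides with $V_i\rightarrow U_i\rightarrow U$ and hence factors through $U_i\rightarrow U$; therefore $gh$ factors through $U_i\rightarrow U$ as well, giving $gh\in H_\cU(W)\subseteq H(W)$. This says precisely that $h\in(g^*H)(W)$, establishing the inclusion.

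The only real obstacle is the middle step — producing the covering $\cV$ and confirming it lies in $\ZAR{Y}$ — since this is where the geometry actually enters, through stability of open immersions under base change and the compatibility of preimages with unions. The surrounding arguments, including the verification that $g^*H$ is a sieve and all the structure-map bookkeeping, are formal manipulations with the definitions of a sieve and of the pullback $g^*H$, and I expect them to present no difficulty.
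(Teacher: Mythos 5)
Your proof is correct and takes essentially the same route as the paper: both pull back the given Zariski covering $\cU=\{U_i\rightarrow U\}$ to $\{U_i\times_U V\rightarrow V\}$, observe this is a Zariski covering of $V$, and conclude that $g^*H$ contains the associated sieve. The paper states this in one line (treating the sieve property of $g^*H$ as part of its definition), while you spell out the verifications — closure under precomposition, stability of open immersions under base change, and the inclusion $H_\cV\subseteq g^*H$ — but the underlying argument is identical.
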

\begin{proof}
Suppose $H$ contains $H_\cU$ where $\cU$ is a Zariski covering of $U$, then $g^*H$ contains $H_{g^*\cU}$ where $g^*\cU=\{U_i\times_U V\rightarrow V\}$, which is a Zariski covering of $V$.
\end{proof}

\subsection{Presheaves and sheaves}
We define presheaves on sieves and construct a sheafifcation functor.
Then various lemmas and formulas needed in section \ref{svb} are developed.
\begin{defn}
Let $X$ be an object of a site $\TT$, and suppose $H$ is a sieve on $X$ belonging to the site.
\begin{enumerate}
\item An {\em $H$-presheaf} is a functor $\cC_H^{op}\rightarrow \set$.
\item An {\em $H$-sheaf} is an $H$-presheaf $F$ such that for each object $U$ of $\cC_H$
and a covering $\{U_i\rightarrow U\}$, the diagram
$$\xymatrix{F(U)\ar[r] & \prod F(U_i)
\ar@<.5ex>[r]^-{p_1^*} \ar@<-.5ex>[r]_-{p_2^*}& \prod F(U_i\times_U U_j)}$$
is exact where $p_1$ and $p_2$ are projections to the first and the second factors of $U_i\times_U U_j$.
\item An $H$-presheaf $F$ is said to be {\em separated} if for each object $U$ of $\cC_H$
and a covering $\{U_i\rightarrow U\}$, the map $F(U)\rightarrow \prod F(U_i)$
is injective.
\end{enumerate}
\end{defn}
Here we use the convention that the value of $F$ on an object $U\xrightarrow f X$ of $\cC_H$ is written as $F(U)$ assuming that the structure map $f$ is understood. When we need to consider two different structure maps $f$ and $g$, we will distinguish them by writing $F(\smap Uf)$ and $F(\smap Ug)$.
By replacing the category of sets with
the category of abelian groups, rings, etc., we get the definitions of $H$-presheaves of abelian groups, rings, etc. A map of $H$-presheaves, ($H$-sheaves, separated $H$-presheaves)
is a natural transformation of functors. 
We denote the categories of $H$-presheaves,
$H$-sheaves, and separated $H$-presheaves by $Pre_H(\TT)$, $Shv_H(\TT)$, and $Pre^s_H(\TT)$, respectively. 
Then $Shv_H(\TT)\subseteq Pre^s_H(\TT)\subseteq Pre_H(\TT)$.
Suppose $H$ and $K$ are sieves belonging to $\TT$ and $K\subseteq H$. Then $\cC_K$ is a
full subcategory of $\cC_H$, and the composition with the inclusion functor induces functors
$Pre_H(\TT)\rightarrow Pre_K(\TT)$, $Shv_H(\TT)\rightarrow Shv_K(\TT)$, and
$Pre_H^s(\TT)\rightarrow Pre_K^s(\TT)$ called restrictions. These functors will be denoted by $-|_K$ universally.
Intuitively, we may consider an $H$-presheaf as a presheaf defined only on {\em small } open sets.
If the site $\TT$ has a terminal object $X$ and $H=\Hom_\TT(-,X)$, the biggest sieve on $X$, then $\cC_H$ is naturally identified with $\TT$.
In this case, the categories are written as $Pre(\TT)$, $Pre^s(\TT)$, and $Shv(\TT)$.
They are identified with the usual categories  of presheaves, separated presheaves, and sheaves.

We can sheafify an $H$-presheaf to obtain a sheaf if $H$ is a sieve on a final object
belonging to a site.
Only local information is needed to define a sheaf after all.
The construction of a sheafification functor $\xi_H:Pre_H(\TT)\rightarrow Shv(\TT)$
presented below follows the construction 
in the proof of Theorem 2.64 in \cite{FAG} of the usual sheafification functor.
First, {\em locally equal} sections are identified to get a separated presheaf, then {\em locally defined} sections are patched together to
obtain a sheaf. The construction works for sheaves of abelian groups, rings, etc., too.

Let $\TT$ be a site and $X$ an object of $\TT$. 
Suppose $H$ is a seive on $X$, and $F$ is an $H$-presheaf.
Then we define an $H$-presheaf $F^s$ by taking $F^s(U)=F(U)/\sim$ 
where we say $s\sim t$ for $s,t\in F(U)$ 
if there is a covering $\{U_i\rightarrow U\}$ 
such that the pullbacks of $s$ and $t$ to each $U_i$ coincide.
We denote the equivalence class of $s\in F(U)$ by $\bar s\in F^s(U)$.
If $f:V\rightarrow U$ is a map in $\cC_H$, 
the pullback $f^*:F(U) \rightarrow F(V)$ is compatible with the equivalence relation, 
so we have a pullback $f^*:F^s(U)\rightarrow F^s(V)$ defined by $\bar s \mapsto \overline{f^*s}$.

\begin{lemma}\label{adjshvsep}
The $H$-presheaf $F^s$ defined above is separated.
Each map $\gamma:F_1\rightarrow F_2$ of $H$-presheaves 
induces a map $\gamma^s:F_1^s\rightarrow F_2^s$.
Thus, we get a functor $Pre_H(\TT)\rightarrow Pre^s_H(\TT)$.
\end{lemma}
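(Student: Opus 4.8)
The plan is to verify the three assertions in turn, treating the separatedness of $F^s$ as the only part that genuinely uses the site structure; the remaining claims are naturality computations that I expect to be routine.

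First, to show $F^s$ is separated, I would fix an object $U$ of $\cC_H$, a covering $\{U_i\rightarrow U\}$, and two classes $\bar s,\bar t\in F^s(U)$ having the same image in $\prod_i F^s(U_i)$, with the goal of concluding $\bar s=\bar t$. By hypothesis $\overline{\res{s}{U_i}}=\overline{\res{t}{U_i}}$ in $F^s(U_i)$ for every $i$, so by the definition of $\sim$ there is, for each $i$, a covering $\{U_{ij}\rightarrow U_i\}$ on which $\res{s}{U_i}$ and $\res{t}{U_i}$ have equal pullbacks. By functoriality of the pullback in $F$, these pullbacks are $\res{s}{U_{ij}}$ and $\res{t}{U_{ij}}$, so $\res{s}{U_{ij}}=\res{t}{U_{ij}}$ for all $i,j$. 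The key step is now to assemble the $U_{ij}$ into a single covering of $U$: by axiom (3) of a Grothendieck topology the family of composites $\{U_{ij}\rightarrow U_i\rightarrow U\}$ is a covering of $U$, and each $U_{ij}$ lies in $\cC_H$ by Proposition \ref{svcatprop}, so $F$ is defined on it. Since $s$ and $t$ restrict to equal sections on every member of this covering, $s\sim t$, i.e.\ $\bar s=\bar t$. This is the heart of the argument: the transitivity axiom is exactly what collapses ``locally locally equal'' to ``locally equal,'' and it is the one place that demands care, chiefly the bookkeeping that the doubly-indexed family $\{U_{ij}\}$ is a legitimate covering sitting inside $\cC_H$.

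Next, for a map $\gamma:F_1\rightarrow F_2$ of $H$-presheaves I would define $\gamma^s$ objectwise by $\gamma^s_U(\bar s)=\overline{\gamma_U(s)}$. Well-definedness follows from naturality of $\gamma$: if $s\sim t$ via a covering $\{U_i\rightarrow U\}$, then the naturality square for each map $U_i\rightarrow U$ gives $\res{(\gamma_U s)}{U_i}=\gamma_{U_i}(\res{s}{U_i})=\gamma_{U_i}(\res{t}{U_i})=\res{(\gamma_U t)}{U_i}$, so $\gamma_U(s)\sim\gamma_U(t)$. The same naturality square applied to an arbitrary morphism $f:V\rightarrow U$ of $\cC_H$ shows $f^*\circ\gamma^s_U=\gamma^s_V\circ f^*$, so that $\gamma^s$ is a morphism of $H$-presheaves; since both $F_1^s$ and $F_2^s$ are separated by the first part, it is in fact a morphism in $Pre^s_H(\TT)$.

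Finally, functoriality is immediate from the formula $\gamma^s_U(\bar s)=\overline{\gamma_U(s)}$: it sends $\mathrm{id}_F$ to $\mathrm{id}_{F^s}$, and it respects composition because $\overline{(\delta\gamma)_U(s)}=\overline{\delta_U(\gamma_U(s))}=\delta^s_U(\gamma^s_U(\bar s))$ for composable $\gamma$ and $\delta$. I expect no obstacle in this last paragraph; the entire weight of the lemma rests on the transitivity step in the separatedness proof, with everything else being a direct consequence of the defining formulas and the naturality of $\gamma$.
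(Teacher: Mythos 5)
Your proof is correct, and its overall structure (quotient by local equality, objectwise definition of $\gamma^s$, compatibility with restriction, functoriality) matches the paper's. The genuinely interesting difference is the separatedness step, where you are more careful than the paper itself. The paper disposes of separatedness in one line: for $s,t\in F(U)$, ``if their pullbacks to each $U_i$ coincide, then $s\sim t$ by definition.'' Read literally, that only treats the case where $s|_{U_i}=t|_{U_i}$ on the nose in $F(U_i)$; but injectivity of $F^s(U)\rightarrow\prod F^s(U_i)$ requires the a priori weaker hypothesis that the classes $\overline{s|_{U_i}}$ and $\overline{t|_{U_i}}$ agree in $F^s(U_i)$, i.e.\ that $s$ and $t$ agree only \emph{locally} on each $U_i$. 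Your argument supplies exactly the missing bridge: extract coverings $\{U_{ij}\rightarrow U_i\}$ witnessing $s|_{U_i}\sim t|_{U_i}$, invoke axiom (3) of a Grothendieck topology to conclude that the composites $\{U_{ij}\rightarrow U\}$ form a covering of $U$, and use Proposition \ref{svcatprop} to see that each $U_{ij}$ lies in $\cC_H$ so that all restrictions are defined. This ``locally locally equal implies locally equal'' reduction is the real content of separatedness, and your identification of it as the heart of the lemma is accurate; the paper's ``by definition'' silently absorbs it. The remaining parts of your argument (well-definedness of $\gamma^s$ from naturality of $\gamma$, the commuting square with $f^*$, and $(\delta\gamma)^s=\delta^s\gamma^s$) coincide with the paper's proof.
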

\begin{proof}
Suppose $\{U_i\rightarrow U\}$ is a covering of an object $U$ of $\cC_H$.
For $s,t\in F(U)$, if their pullbacks to each $U_i$ coincide, then $s\sim t$ by definition.
Therefore, $F^s(U)\rightarrow \prod F^s(U_i)$ is injective. This proves separatedness.

For each $U$ in $\cC_H$, the map $\gamma(U):F_1(U)\rightarrow F_2(U)$ 
is compatible with the equivalence relation, 
so we have a map $\gamma^s(U):F_1^s(U)\rightarrow F_2^s(U)$ 
defined by $\bar s \mapsto \overline{\gamma s}$.
Since these maps are defined in terms of equivalence classes,
it is straightforward to verify compatibility of various maps.
In particular, the following diagram commutes for any $f:V\rightarrow U$, 
so we get a map $\gamma^s:F_1^s\rightarrow F_2^s$.
$$\xymatrix{
F_1^s(U)\ar[d]_{f^*}\ar[r]^{\gamma^s(U)}&F_2^s(U)\ar[d]^{f^*}\\
F_1^s(V)\ar[r]_{\gamma^s(V)}&F_2(V)\\
}$$

If $\delta:F_2\rightarrow F_3$ is another map of $H$-presheaves, 
then $(\delta\gamma)^s=\delta^s\gamma^s$. Thus, we get a functor $Pre_H(\TT)\rightarrow Pre_H^s(\TT)$.
\end{proof}

Next, we define the sheafification functor $\xi_H:Pre_H(\TT)\rightarrow Shv(\TT)$.
We assume that $X$ is a final object of $\TT$, 
and that $H$ is a sieve on $X$ belonging to $\TT$.
Suppose $U$ is an object of $\TT$.
Consider the set of pairs $(\{U_i\rightarrow U\},\{s_i\})$ 
where $\{U_i\rightarrow U\}$ is a covering of $U$
such that each $U_i$ is in $\cC_H$,  $s_i\in F^s(U_i)$, 
and the pullbacks of $s_i$ and $s_j$ to $U_i\times_U U_j$ coincide.
Note that the set is nonempty since $H$ belongs to $\TT$ and $X$ is a final object,
also that we are free to use pullbacks by Proposition \ref{svcatprop}.
We declare $(\{U_i\rightarrow U\},\{s_i\})$ 
and $(\{V_j\rightarrow U\},\{t_j\})$ are equivalent 
if the pullbacks of $s_i$ and $t_j$ to $U_i\times_U V_j$ coincide. 
The relation is an equivalence relation as will be proved in Lemma \ref{adjshvshff}. 
We define $\xi_H F(U)$ to be the set of equivalence classes $[\{U_i\rightarrow U\},\{s_i\}]$.
Given a map $f:V\rightarrow U$ of $\TT$,
we define $\xi_HF(f):\xi_H F(U)\rightarrow \xi_H F(V)$ 
by sending the class $[\{U_i\rightarrow U\},\{s_i\}]$ 
to the class $[\{U_i\times_U V\rightarrow V\},\{p_i^*s_i\}]$ 
where $p_i^*s_i$ is the pullback of $s_i$ 
along the projection $p_i:U_i\times_U V \rightarrow U_i$.
We also define $\xi_H$ on morphisms. Suppose $\gamma:F_1\rightarrow F_2$ is a map of $H$-presheaves.
For each object $U$ of $\TT$, define $\xi_H\gamma(U):\xi_H F_1(U)\rightarrow \xi_H F_2(U)$ by sending
the class of $(\{U_i\rightarrow U\},\{s_i\})$ to $(\{U_i\rightarrow U\},\{\gamma^s s_i\})$.

\begin{lemma}\label{adjshvshff}
The description in the previous paragraph defines a functor 
$\xi_H:Pre_H(\TT)\rightarrow Shv(\TT)$.
\end{lemma}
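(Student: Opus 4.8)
The plan is to check four things in order: (i) the relation declared on pairs $(\{U_i\rightarrow U\},\{s_i\})$ is genuinely an equivalence relation; (ii) the assignment $U\mapsto \xi_H F(U)$, together with the pullback maps $\xi_H F(f)$, is a well-defined presheaf on $\TT$; (iii) this presheaf is a sheaf; and (iv) $\gamma\mapsto \xi_H\gamma$ sends each map of $H$-presheaves to a morphism of sheaves and respects identities and composition. Throughout, the two workhorses are the separatedness of $F^s$ (Lemma \ref{adjshvsep}) and the stability of coverings under base change (axiom (2)) together with the composition axiom (3); Proposition \ref{svcatprop} guarantees all the pullbacks used stay inside $\cC_H$.

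I would begin with the equivalence relation. Reflexivity is immediate and symmetry is built into the symmetric phrasing of the condition. Transitivity is the first point with content: given $(\{U_i\rightarrow U\},\{s_i\})\sim(\{V_j\rightarrow U\},\{t_j\})$ and $(\{V_j\rightarrow U\},\{t_j\})\sim(\{W_k\rightarrow U\},\{r_k\})$, I would pull all sections back to the triple fiber products $U_i\times_U V_j\times_U W_k$, where the first relation gives $s_i=t_j$ and the second gives $t_j=r_k$, hence $s_i=r_k$ there. Since $\{U_i\times_U V_j\times_U W_k\rightarrow U_i\times_U W_k\}_j$ is the base change of the covering $\{V_j\rightarrow U\}$, separatedness of $F^s$ forces $s_i$ and $r_k$ to agree already on $U_i\times_U W_k$, which is exactly the required relation. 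For the presheaf structure, independence of $\xi_H F(f)$ from the chosen representative follows from the canonical isomorphism $(U_i\times_U V)\times_V(U_i'\times_U V)\cong(U_i\times_U U_i')\times_U V$, and the laws $\xi_H F(\mathrm{id})=\mathrm{id}$ and $\xi_H F(gf)=\xi_H F(f)\,\xi_H F(g)$ reduce likewise to the standard isomorphisms among iterated fiber products.

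The sheaf axiom is the technical heart. For separatedness of $\xi_H F$, I would show that if two classes restrict to the same element over every member of a covering $\{V_a\rightarrow U\}$, then refining both representatives over the $V_a$ and invoking separatedness of $F^s$ forces the two classes to coincide. For gluing, given compatible classes $\sigma_a\in\xi_H F(V_a)$ represented by $(\{V_{ab}\rightarrow V_a\},\{s_{ab}\})$, the composites $\{V_{ab}\rightarrow V_a\rightarrow U\}$ form a covering of $U$ by axiom (3), with all members still in $\cC_H$; I would then verify that the sections $s_{ab}$ agree on the overlaps $V_{ab}\times_U V_{a'b'}$, using the compatibility of the $\sigma_a$ and, once more, separatedness of $F^s$, so that the pair $(\{V_{ab}\rightarrow U\},\{s_{ab}\})$ defines a class in $\xi_H F(U)$ restricting to each $\sigma_a$. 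I expect this gluing step to be the main obstacle: the hypothesis only gives agreement of the $\sigma_a$ \emph{as classes}, so each equality on a double overlap must be extracted through a separatedness argument rather than read off directly, and the bookkeeping over the doubly-indexed refinement is where care is needed.

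Finally, for functoriality on morphisms I would use that $\gamma^s$ commutes with pullbacks (Lemma \ref{adjshvsep}), so $\xi_H\gamma$ carries equivalent pairs to equivalent pairs and is a well-defined natural transformation; its source and target being sheaves by (iii), it is a morphism in $Shv(\TT)$. The relations $\xi_H(\delta\gamma)=\xi_H\delta\,\xi_H\gamma$ and $\xi_H(\mathrm{id})=\mathrm{id}$ then hold objectwise because the corresponding identities already hold for the induced maps $\gamma^s$ on $F^s$, completing the verification that $\xi_H:Pre_H(\TT)\rightarrow Shv(\TT)$ is a functor.
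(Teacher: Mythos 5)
Your proposal is correct and follows essentially the same route as the paper's proof: transitivity via triple fiber products and separatedness of $F^s$, well-definedness of pullbacks via the canonical fiber-product isomorphisms, the sheaf condition via the doubly-indexed refinement covering $\{V_{ab}\rightarrow U\}$, and functoriality on morphisms via $\gamma^s$. One small point: in the gluing step the agreement of $s_{ab}$ and $s_{a'b'}$ on $V_{ab}\times_U V_{a'b'}$ can be read off directly from the definition of equivalence of pairs, using $V_{ab}\times_U V_{a'b'}\cong(V_{ab}\times_U V_{a'})\times_{(V_a\times_U V_{a'})}(V_a\times_U V_{a'b'})$, so no separatedness argument is needed there (the paper does exactly this).
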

\begin{proof}
The relation is reflexive and symmetric by definition. 
To prove that it is transitive, 
suppose $(\{U_i\rightarrow U\},\{s_i\}) \sim (\{V_j\rightarrow U\},\{t_j\})$
and $(\{V_j\rightarrow U\},\{t_j\})\sim(\{W_k\rightarrow U\},\{u_k\})$.
The pullbacks of $s_i$ and $t_j$ to $U_i\times_U V_j$ coincide,
and the pullbacks of $t_j$ and $u_k$ to $V_j\times_U W_k$ coincide.
Then the pullbacks of $s_i$, $t_j$, and $u_k$ to $U_i\times_U\times V_j \times_U W_k$ coincide. 
Since $F^s$ is separated, the pullbacks of $s_i$ and $u_k$ to $U_i\times_U W_k$ coincide.

For the remainder of the proof, we will frequently use the fact that $(\{U_i\rightarrow U\},\{s_i\})$ is equivalent to $(\{V_i\rightarrow U\},\{t_i\})$ if there is an isomorphism $f_i:V_i\rightarrow U_i$ over $U$ for each $i$ and $t_i=f_i^*s_i$.

We prove that the map $\xi_HF(f)$ is well-defined.
First, $U_i\times_U V$ is in $\cC_H$ by Proposition \ref{svcatprop}.
Second, the definition of $\xi_HF(f)$ does not depend on representatives because 
if $(\{U_i\rightarrow U\},\{s_i\})$ and $(\{V_j\rightarrow U\},\{t_j\})$ are equivalent,
then the pullbacks of $s_i$ and $t_j$ coincide in $F^s(U_i\times_U V_j)$ so that 
their pullbacks coincide in $F^s\big((U_i\times_U V)\times_V (V_j\times_U V)\big)
\cong F^s\big((U_i\times_U V_j)\times_U V\big)$.

If $f:V\rightarrow U$ and $g:W\rightarrow V$ are maps in $\TT$,
then $\xi_H F(fg)=\xi_H F(g) \xi_H F(f)$
because if we let $q_i$ be the projection $(U_i\times_U V)\times_V W\rightarrow U_i\times_U V$,
and $r_i$ the projection $U_i\times_U W \rightarrow U_i$,
then the pairs $(\{(U_i\times_U V)\times_V W\rightarrow W\},\{q_i^*p_i^*s_i\})$
and $(\{U_i\times_U W\rightarrow W\},\{r_i^*s_i\})$ are equivalent.
This proves that $\xi_H F$ is a presheaf on $\TT$.

Now we show that $\xi_H F$ satisfies the sheaf conditions.
Let $\{U_i\rightarrow U\}_{i\in I}$ be a covering.
Consider the following sections:
$$([\sigma_i])_{i\in I}=([\{U_{ik} \rightarrow U_i\}_{k\in K_i},\{s_{ik}\}_{k\in K_i}])_{i\in I}\in\prod_{i\in I}\xi_H F(U_i).$$
Assume that the pullbacks of $[\sigma_i]$ and $[\sigma_j]$ coincide in $\xi_H F(U_i\times_U U_j)$,
which means that for each $i,j\in I$,
$(\{U_{ik}\times_U U_j\rightarrow U_i\times_U U_j\}, \{p_{ik}^*s_{ik}\})$
is equivalent to $(\{U_i\times_U U_{jl}\rightarrow U_i\times_U U_j\}, \{q_{jl}^*s_{jl}\})$
where $p_{ik}$ and $q_{jl}$ are projections
$U_{ik}\times_U U_j\rightarrow U_{ik}$ and $U_i\times_U U_{jl}\rightarrow U_{jl}$, respectively.
Then the pullbacks of $s_{ik}$ and $s_{jl}$ along the projections coincide in $F^s(U_{ik}\times_U U_{jl})$ for all $i,j\in I,k\in K_i$, and $l\in K_j$
since $U_{ik}\times_U U_{jl} \cong (U_{ik}\times_U U_j) \times_{(U_i\times_U U_j)} (U_i\times_U U_{jl})$.
Therefore, the pair $\sigma=(\{U_{ik}\rightarrow U\}_{i\in I, k\in K_i},\{s_{ik}\}_{i\in I, k\in K_i})$
defines a section in $\xi_H F(U)$.
We will show that the pullback of $[\sigma]$ to each $U_j$ is $[\sigma_j]$.
The pullback of $[\sigma]$ in $\xi_H F(U_j)$ is the class of the pair
$(\{U_{ik}\times_U U_j\rightarrow U_j\},\{p_{ik}^*s_{ik}\})$.
This pair is equivalent to the pair $\sigma_j=(\{U_{jl}\rightarrow U_j\},\{s_{jl}\})$
because the pullbacks of $p_{ik}^*s_{ik}$ and $s_{jl}$ coincide in
$F^s(U_{ik}\times_U U_{jl})\cong F^s((U_{ik}\times_U U_j) \times_{U_j} U_{jl})$.
This shows the existence of a section.
For uniqueness, suppose $\tau=(\{V_j\rightarrow U\},\{t_j\})$ is another section of $\xi_H F(U)$
whose pullback in $\xi_H F(U_i)$ is equivalent to $\sigma_i$ for all $i$, then
the pullbacks of $t_j$ and $s_{ik}$ coincide in $F^s(V_j\times_U U_{ik})$ for all $i,j$, and $k$.
This implies that $\tau$ is equivalent to $\sigma$.
This completes the proof that $\xi_H F$ is a sheaf.

Next, we verify that $\xi_H\gamma$ is a map of sheaves 
if $\gamma:F_1\rightarrow F_2$ is a map of $H$-presheaves.
If $f:V\rightarrow U$ is a map, the diagram
$$\xymatrix{
\xi_H F_1(U)\ar[r]^{\xi_H\gamma(U)}\ar[d]_{f^*} & \xi_H F_2(U)\ar[d]^{f^*} \\
\xi_H F_1(V)\ar[r]_{\xi_H\gamma(V)} & \xi_H F_2(V) \\
}$$
commutes:
\begin{align*}
f^*\xi_H\gamma(U)[\{U_i\rightarrow U\},\{s_i\}]
&=f^*[\{U_i\rightarrow U\},\{\gamma^s s_i\}]\\
&=[\{U_i\times_U V\rightarrow V\},\{p_i^*\gamma^s s_i\}]\\
&=[\{U_i\times_U V\rightarrow V\},\{\gamma^s p_i^* s_i\}]\\
&=\xi_H\gamma(V)[\{U_i\times_U V\rightarrow V\},\{p_i^* s_i\}]\\
&=\xi_H\gamma(V) f^*[\{U_i\rightarrow U\},\{s_i\}].
\end{align*}
Hence $\xi_H\gamma$ is a map of sheaves. 

If $\delta:F_2\rightarrow F_3$, is another map of $H$-presheaves, then
$\xi_H(\delta\gamma)=(\xi_H\delta)(\xi_H\gamma)$ since $\delta^s\gamma^s=(\delta\gamma)^s$.
Therefore $\xi$ is a functor $Pre_H(\TT)\rightarrow Shv(\TT)$.
\end{proof}

\begin{thm}\label{adjshv}
Let $\TT$ be a site, $X$ a final object of $\TT$, 
$H$ a sieve on $X$ belonging to $\TT$ and
$\eta=-|_H:Shv(\TT)\rightarrow Pre_H(\TT)$ the restriction functor.
Then we can define as above a functor $\xi_H:Pre_H(\TT)\rightarrow Shv(\TT)$
called sheafification,
and there is a natural bijection
$$\Hom_{Shv(\TT)}(\xi_H F, G) \cong \Hom_{Pre_H(\TT)}(F, \eta G)$$
\end{thm}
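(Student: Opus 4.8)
The plan is to realize $\xi_H$ as left adjoint to $\eta$ by constructing the bijection explicitly from a unit transformation together with the sheaf property of the target $G$. First I would build the unit $\theta_F\colon F\to \eta\xi_H F$. For an object $U$ of $\cC_H$ and a section $s\in F(U)$, the singleton $\{\mathrm{id}_U\colon U\to U\}$ is a covering of $U$ with $U$ in $\cC_H$, so $[\{U\to U\},\{\bar s\}]$ is a legitimate element of $\xi_H F(U)$; I set $\theta_F(U)(s)=[\{U\to U\},\{\bar s\}]$. Compatibility with pullback along maps inside $\cC_H$ is immediate from the description of $\xi_H F$ on morphisms and the remark that a pair is unchanged up to the equivalence by isomorphisms over $U$, so $\theta_F$ is a map of $H$-presheaves and is natural in $F$. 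The candidate bijection then sends $\phi\in\Hom_{Shv(\TT)}(\xi_H F,G)$ to $(\eta\phi)\circ\theta_F\in\Hom_{Pre_H(\TT)}(F,\eta G)$.

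For the inverse I would use that $\eta G$ is separated: for $U$ in $\cC_H$ any covering $\{U_i\to U\}$ lies in $\cC_H$ by Proposition \ref{svcatprop}, and the sheaf $G$ is in particular separated, so $G(U)\to\prod G(U_i)$ is injective. Hence the map $s\mapsto\bar s$ identifies $\eta G$ with $(\eta G)^s$, and by Lemma \ref{adjshvsep} a map $\psi\colon F\to\eta G$ induces $\psi^s\colon F^s\to\eta G$. Given $\psi$, I define $\Psi(\psi)(U)$ on a class $[\{U_i\to U\},\{s_i\}]\in\xi_H F(U)$ as follows: put $g_i=\psi^s(U_i)(s_i)\in(\eta G)(U_i)=G(U_i)$. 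Since the $s_i$ agree in $F^s(U_i\times_U U_j)$ and $\psi^s$ is natural, the $g_i$ agree in $G(U_i\times_U U_j)$; because $G$ is a sheaf on all of $\TT$ there is a unique $g\in G(U)$ with $g|_{U_i}=g_i$, and I set $\Psi(\psi)(U)[\{U_i\to U\},\{s_i\}]=g$.

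It remains to check that $\Psi(\psi)$ is well defined on equivalence classes, is a map of sheaves, and that $\Psi$ is inverse to $\phi\mapsto(\eta\phi)\circ\theta_F$. Independence of the representative and commutativity with pullback both reduce to the uniqueness clause of the sheaf axiom for $G$: if two pairs are equivalent their associated families agree after restriction to the $U_i\times_U V_j$, hence glue to the same section, and the same argument handles a map $f\colon V\to U$. For the composite $\Psi((\eta\phi)\circ\theta_F)=\phi$, I would observe that an arbitrary class $[\{U_i\to U\},\{s_i\}]$ is the unique gluing of the sections $\theta_F(U_i)$ applied to the $s_i$, so applying the sheaf map $\phi$ and invoking uniqueness recovers $\phi$ on that class; conversely, evaluating $\Psi(\psi)$ on the identity-covering classes $\theta_F(U)(s)$ returns $\psi(U)(s)$, giving $(\eta\Psi(\psi))\circ\theta_F=\psi$. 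Naturality of the bijection in $F$ and in $G$ is then a direct diagram check.

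The step I expect to be the main obstacle is the inverse construction, and specifically the role of the gluing: the covering $\{U_i\to U\}$ representing a section of $\xi_H F(U)$ has all $U_i$ in $\cC_H$, but the object $U$ itself need not lie in $\cC_H$, so the existence and uniqueness of $g\in G(U)$ must be drawn from the sheaf property of $G$ over the whole site $\TT$ rather than over $\cC_H$. Verifying that this gluing is independent of the chosen representative—which rests on separatedness of $F^s$ from Lemma \ref{adjshvsep} and on the uniqueness half of the sheaf condition—is where the bookkeeping is most delicate.
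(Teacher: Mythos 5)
Your proposal is correct and follows essentially the same route as the paper: the forward map is the paper's $\beta_\alpha(U)(s)=\alpha(U)[\{U\xrightarrow{1}U\},\bar s]$ (your unit $\theta_F$ composed with $\eta\phi$), and the inverse is the paper's $\alpha_\beta$, obtained by gluing the sections $\beta(U_i)s_i$ via the sheaf property of $G$ over all of $\TT$, with the same well-definedness, mutual-inverse, and naturality checks. Your explicit passage through $\psi^s\colon F^s\to\eta G$ (using separatedness of $\eta G$) is a slightly more careful treatment of a step the paper handles with a mild abuse of notation, but it is the same argument.
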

\begin{proof}
We have defined $\xi_H$ in paragraphs above.
So we will prove that $(\xi_H,\eta)$ is an adjoint pair. 
Suppose $F$ is a $H$-presheaf and $G$ is a sheaf.
Given a map $\alpha:\xi_H F\rightarrow G$ of sheaves,
define a map $\beta_\alpha: F\rightarrow \eta G$ of $H$-presheaves as follows.
If $U$ is an object of $\cC_H$,  $(\{U\xrightarrow1 U\},\bar s)$ defines
a section in $\xi_H F(U)$ for each $s\in F(U)$. 
Define $\beta_\alpha(U):F(U)\rightarrow \eta G(U)$ by sending
$s$ to $\alpha(U)[U\rightarrow U, \bar s]$.
If $f:V\rightarrow U$ is a map of $\cC_H$, then for all $s\in F(U)$,
\begin{align*}
f^*\beta_\alpha(U)(s) 
&= f^*\alpha(U)[\{U\rightarrow U\},\bar s] \\
&= \alpha(U)f^*[U\rightarrow U,\bar s]\\
&= \alpha(V)[U\times_U V \rightarrow V, p^*\bar s]\\
&= \alpha(V)[V\rightarrow V,\overline{f^*s}]\\
&= \beta_\alpha(V)f^*s
\end{align*}
Therefore $\beta_\alpha$ is a map of sheaves.
Conversely, given a map $\beta:F\rightarrow \eta G$ of $H$-presheaves,
define a map $\alpha_\beta:\xi_H F\rightarrow G$ of sheaves as follows.
A section in $\xi_H F(U)$ is a class of a pair $\sigma=(\{U_i\rightarrow U\},\{s_i\})$ such that
$U_i$ is an object of $\cC_H$
and the pullbacks of $s_i$ and $s_j$ to $U_i\times_U U_j$ coincide.
So we have sections $\{\beta(U_i)s_i\}\in\prod \eta G(U_i)$
such that the pullbacks to $U_i\times_U U_j$ of $\beta(U_i)s_i$ and $\beta(U_j)s_j$ coincide.
Since $G$ is a sheaf, there is a unique section in $G(U)$
whose pullback in each $G(U_i)=\eta G(U_i)$ is $\beta(U_i)s_i$.
We call it $\alpha_\beta(U)[\sigma]$.
It does not depend on the representative.
If $\tau=(\{V_j\rightarrow U\}, \{t_j\})$ is equivalent to $\sigma$,
then the pullbacks of $s_i$ and $t_j$ in $F(U_i\times_U V_j)$ coincide so that
the pullbacks of $\alpha_\beta(U)[\sigma]$ and $\alpha_\beta(U)[\tau]$ in $G(U_i\times_U V_j)$ coincide.
Therefore $\alpha_\beta(U)$ is well-defined.
If $f:V\rightarrow U$ is a map in $\TT$,
then for each $\sigma=(\{U_i\rightarrow U\},\{s_i\})$,
the pullbacks of $f^*\alpha_\beta(U)[\sigma]$ and $\alpha_\beta(V)f^*[\sigma]$
in $G(U_i\times_U V)$ for each $i$
are $p_i^*\beta(U_i)s_i=\beta(U_i\times_U V)p_i^*s_i$ where $p_i$ is the projection
$U_i\times_U V \rightarrow U_i$. Therefore $f^*\alpha_\beta = \alpha_\beta f^*$ and
it proves that $\alpha_\beta$ is map of sheaves.
For every object $U\rightarrow X$ in $\cC_H$ and $s\in F(U)$,
$\beta_{\alpha_\beta}(U)s =\alpha_\beta(U)[U\rightarrow U,s] = \beta(U)s$. Hence $\beta_{\alpha_\beta}=\beta$.
For every object $U\rightarrow X$ in $\TT$ and $[\sigma]\in \xi_H F(U)$, $\sigma=(\{U_i\rightarrow U\},\{s_i\})$,
The pullbacks of $\alpha_{\beta_\alpha}(U)[\sigma]$ and $\alpha(U)[\sigma]$ in $G(U_j)$ for every $j$ are
$\beta_\alpha(U_j)(s_j)$ and  $\alpha(U_j)[U_i\times_U U_j\rightarrow U_j, p_i^*s_i]$.
Both are equal to  $\alpha(U_j)[U_j\rightarrow U_j,s_j]$. Therefore $\alpha_{\beta_\alpha} = \alpha$.
This proves the bijection $\Hom_H(\xi_H F,G)\cong \Hom_K(F,\eta G)$.

Finally, to prove that the bijective correspondence is natural, we show that the following diagrams commute
for any $\gamma:F_1\rightarrow F_2$ and $\delta:G_1\rightarrow G_2$.
$$\xymatrix{
\Hom_H(\xi_H F_2,G)\ar[r]^{\beta_\cdot}\ar[d]_{(\xi_H\gamma)^*}&\Hom_K(F_2,\eta G)\ar[d]^{\gamma^*}\\
\Hom_H(\xi_H F_1,G)\ar[r]_{\beta_\cdot} & \Hom_K(F_1,\eta G) \\
 \Hom_H(\xi_H F, G_1) \ar[r]^{\beta_\cdot}\ar[d]_{\delta_*} & \Hom_K(F,\eta G_1)\ar[d]^{(\eta\delta)_*}\\
 \Hom_H(\xi_H F, G_2)\ar[r]_{\beta_\cdot} & \Hom_K(F, \eta G_2)
}$$
If $U$ is an object of $\cC_H$, $s\in F_1(U)$, and $\alpha:\xi_H F_2\rightarrow G$, then
\begin{align*}
\gamma^*\beta_{\alpha}(U)s
&=\beta_{\alpha}(U)(\gamma(U)s)\\
&=\alpha(U)[U\rightarrow U, \overline{\gamma(U)s}],\\
\beta_{(\xi_H \gamma)^*\alpha}(U)s
&=\beta_{\alpha\xi_H\gamma}(U)s\\
&=(\alpha\xi_H\gamma)(U)[U\rightarrow U, \bar s]\\
&=\alpha(U)(\xi_H\gamma)(U)[U\rightarrow U,\bar s]\\
&=\alpha(U)[U \rightarrow U,\overline{\gamma(U)s}].
\end{align*}
So the first diagram commutes.
For the second diagram, let $U$ be an object of  $\cC_H$, $s\in F(U)$,
and $\alpha:\xi_H F\rightarrow G_1$. Then
\begin{align*}
(\eta\delta)_*\beta_\alpha(U)s 
&= (\eta\delta)(U)\beta_\alpha(U)s\\
&=(\eta\delta)(U)\alpha(U)[U\rightarrow U,\bar s]\\
&=\delta(U)\alpha(U)[U\rightarrow U,\bar s]\\
&=(\delta\alpha)(U)[U\rightarrow U,\bar s]\\
&=\beta_{\delta\alpha}(U)s\\
&=\beta_{\delta_*\alpha}(U)s.
\end{align*}

\end{proof}
\begin{lemma}\label{unitmap}
Under the hypothesis of Theorem \ref{adjshv}, the unit map $\epsilon:E\rightarrow (\xi_HE)|_H$ of the adjunction is an isomorphism if $E$ is an $H$-sheaf.
\end{lemma}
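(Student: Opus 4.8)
The plan is first to make the unit map explicit. Under the adjunction of Theorem \ref{adjshv}, the component $\epsilon:E\rightarrow (\xi_H E)|_H$ is the image of $\mathrm{id}_{\xi_H E}$ under the bijection $\Hom_{Shv(\TT)}(\xi_H E,\xi_H E)\cong\Hom_{Pre_H(\TT)}(E,\eta\xi_H E)$, so by the formula $\beta_\alpha(U)(s)=\alpha(U)[U\rightarrow U,\bar s]$ established there, for each object $U$ of $\cC_H$ it is given by
$$\epsilon(U):E(U)\rightarrow \xi_H E(U),\qquad s\mapsto [\{U\xrightarrow{1}U\},\bar s].$$
I would then record the observation that, since an $H$-sheaf is in particular separated, the quotient map $E(U)\rightarrow E^s(U)$ of Lemma \ref{adjshvsep} is a bijection; thus we may identify $E^s=E$ and read $\bar s$ as $s$ throughout.

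For injectivity, suppose $\epsilon(U)(s)=\epsilon(U)(t)$. By the definition of the equivalence relation in $\xi_H E(U)$ this means the pullbacks of $\bar s$ and $\bar t$ to $U\times_U U\cong U$ agree in $E^s(U)$, i.e. $\bar s=\bar t$; separatedness then gives $s=t$.

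For surjectivity, take a class $[\{U_i\rightarrow U\},\{s_i\}]\in\xi_H E(U)$. Because $U$ lies in $\cC_H$, Proposition \ref{svcatprop} puts each $U_i$ and each $U_i\times_U U_j$ in $\cC_H$ as well, so the $H$-sheaf axiom for $E$ applies verbatim to the covering $\{U_i\rightarrow U\}$. The compatibility of the $s_i\in E^s(U_i)=E(U_i)$ on the overlaps $U_i\times_U U_j$ therefore yields a unique $s\in E(U)$ with $s|_{U_i}=s_i$. To see that $\epsilon(U)(s)$ equals the given class, I would compare $[\{U\xrightarrow{1}U\},\bar s]$ and $[\{U_i\rightarrow U\},\{s_i\}]$ by pulling back to $U\times_U U_i\cong U_i$: the first restricts to $\overline{s|_{U_i}}=\bar s_i$ and the second to $\bar s_i$, so the two pairs are equivalent. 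Hence $\epsilon(U)$ is onto, and being a natural bijection on every object of $\cC_H$ it is an isomorphism of $H$-presheaves.

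The step I expect to carry the real content is surjectivity, and within it the single point that matters is that the hypothesis $U\in\cC_H$ is exactly what licenses the use of the $H$-sheaf axiom for the covering appearing in an arbitrary section of $\xi_H E(U)$; once the objects $U_i$ and $U_i\times_U U_j$ are known to live in $\cC_H$ the gluing is immediate and the remaining verifications are the routine pullback bookkeeping of the kind already carried out in Lemma \ref{adjshvshff}.
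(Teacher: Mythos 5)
Your proof is correct and follows essentially the same route as the paper's: identify $E^s$ with $E$ via separatedness, make the unit $\epsilon(U):s\mapsto[\{U\xrightarrow{1}U\},\bar s]$ explicit, deduce injectivity from separatedness, and deduce surjectivity from the $H$-sheaf gluing axiom applied to the covering in a representative pair. Your extra care in checking that the glued section's pair is equivalent to the given one (by pulling back to $U\times_U U_i\cong U_i$) is a detail the paper states more briefly, but the argument is identical in substance.
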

\begin{proof}
Since $E$ is separated, $E$ is identified with $E^s$. Using the notation of the proof of Theorem \ref{adjshv},
for each $U\in\cC_H$, $\epsilon(U)$ is defined by $s\mapsto [\{U\xrightarrow 1 U\},s]$.
If $s,t\in E(U)$, and $[\{U\xrightarrow 1 U\}, s] = [\{U\xrightarrow 1 U\}, t]$, then there is a covering $\{U_i\rightarrow U\}$ with each $U_i\in \cC_H$ such that $s|_{U_i} = t|_{U_i}$. Then $s=t$ since $E$ is separated. Hence $\epsilon(U)$ is injective.
For surjectivity of $\epsilon(U)$,
suppose $\sigma=(\{U_i\rightarrow U\},\{s_i\})$ represents an element of $(\xi_H E)(U)$. 
Since $s_i|_{U_{ij}}=s_j|_{U_{ij}}$ for all $i,j$, 
there is an element $s\in E(U)$ such that $s|_{U_i}=s_i$ for all $i$ 
since $E$ is a sheaf. 
Then $(\{U\xrightarrow 1 U\},s)$ and $\sigma$ represent the same element. 
Hence $\epsilon(U)$ is surjective.
\end{proof}

\begin{prop}\label{shffres}
Let $X$ be a final object of a site $\TT$. Suppose $K\subseteq H$ are sieves on $X$ belonging to $\TT$, $F$ is an $H$-presheaf, and $\FresK$ is the restriction of $F$ to $\cC_K$.
Then there is a natural isomorphism $\xi_K (\FresK) \rightarrow \xi_H F$.
\end{prop}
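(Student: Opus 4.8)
The plan is to construct an explicit natural transformation $\theta\colon \xi_K(\FresK)\to \xi_H F$ and to verify that each $\theta(U)$ is a bijection. The starting point is that $\cC_K$ is a full subcategory of $\cC_H$, since $K\subseteq H$ forces $K(V)\subseteq H(V)$ for every $V$. Moreover the separation step commutes with restriction: if $U$ is an object of $\cC_K$, every covering $\{U_i\to U\}$ of $U$ automatically has all its members in $\cC_K$ by Proposition \ref{svcatprop}, so the equivalence relation defining $(\FresK)^s(U)$ is literally the same as the one defining $F^s(U)$. Hence $(\FresK)^s = (F^s)|_K$, compatibly with pullbacks. Consequently a pair $(\{U_i\to U\},\{s_i\})$ representing a section of $\xi_K(\FresK)(U)$, with each $U_i$ in $\cC_K$ and $s_i\in (\FresK)^s(U_i)=F^s(U_i)$, is verbatim a pair of the kind used to represent a section of $\xi_H F(U)$.

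This makes the definition of $\theta$ transparent: $\theta(U)$ sends the class of $(\{U_i\to U\},\{s_i\})$ to the same pair, now regarded as a section of $\xi_H F(U)$. Naturality in $U$ (compatibility with the maps $f^*$) is immediate because both pullbacks are computed by the same fibre-product formula, and naturality in $F$ follows from $(\gamma|_K)^s=\gamma^s|_K$ for a map $\gamma\colon F_1\to F_2$ of $H$-presheaves. To see that $\theta(U)$ is well defined and injective, observe that two $\cC_K$-pairs $(\{U_i\to U\},\{s_i\})$ and $(\{V_j\to U\},\{t_j\})$ are $K$-equivalent exactly when the pullbacks of $s_i$ and $t_j$ agree in $F^s(U_i\times_U V_j)$; since $U_i\times_U V_j$ lies in $\cC_K$ (again by Proposition \ref{svcatprop}), this is word for word the condition for $H$-equivalence. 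Thus $K$-equivalence and $H$-equivalence coincide on pairs built from $\cC_K$, giving both well-definedness and injectivity at once.

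The crux is surjectivity of $\theta(U)$, where the hypothesis that $K$ belongs to $\TT$ enters. Since $K$ belongs to the site and $X$ is final, $\cC_K$ contains some covering $\cW=\{W_a\to X\}$ of $X$. Given an arbitrary $H$-pair $(\{U_i\to U\},\{s_i\})$ with each $U_i$ in $\cC_H$, I pull back $\cW$ along the structure map $U_i\to X$ to obtain the covering $\{W_a\times_X U_i\to U_i\}$; pullbacks of coverings are coverings, and each $W_a\times_X U_i$ lies in $\cC_K$ because it maps to $W_a\in\cC_K$ and Proposition \ref{svcatprop} applies to the sieve $K$. Composing with $U_i\to U$ yields a refinement $\{W_a\times_X U_i\to U\}$ by objects of $\cC_K$; equipping each piece with the pullback of $s_i$ produces a genuine $\cC_K$-pair whose $\theta(U)$-image is $H$-equivalent to the original, the overlaps agreeing because the new sections are restrictions of the old ones. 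Hence every class in $\xi_H F(U)$ is hit.

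The main thing to watch is the verification in this last step that the refined pair really is $H$-equivalent to $(\{U_i\to U\},\{s_i\})$: one checks that the pullbacks of $s_i$ (now indexed over $a$) and of $s_j$ agree on $(W_a\times_X U_i)\times_U U_j$ by pulling back the original overlap agreement on $U_i\times_U U_j$ along the canonical map. With injectivity and surjectivity established for every $U$, together with naturality in $U$ and in $F$, the map $\theta$ is a natural isomorphism $\xi_K(\FresK)\xrightarrow{\ \cong\ }\xi_H F$. (Alternatively, one could argue through the adjunctions of Theorem \ref{adjshv}: restriction along $\cC_K\subseteq\cC_H$ induces a bijection $\Hom_{Pre_H(\TT)}(F,G|_H)\cong\Hom_{Pre_K(\TT)}(\FresK,G|_K)$ for every sheaf $G$, the inverse being gluing across $\cW$ pulled back to each object, after which Yoneda yields the isomorphism; the two routes rest on the same geometric input.)
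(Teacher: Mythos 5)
Your proposal is correct and follows essentially the same route as the paper's own proof: the identity-on-representatives map after identifying $(\FresK)^s$ with $F^s|_K$, injectivity via the fact that $U_i\times_U V_j$ lies in $\cC_K$, and surjectivity by refining an $H$-pair with the pullback of a $\cC_K$-covering of $X$ (the paper uses exactly your $W_a\times_X U_i$ construction, written as $V_j\times U_i$). Your observation that the two separation equivalence relations coincide outright by Proposition \ref{svcatprop} is a slight streamlining of the paper's refinement argument, but the substance is the same.
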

\begin{proof}
We first prove that $(\FresK)^s=F^s|_K$.
For each $U$ in $\cC_K$, 
\begin{align*}
(\FresK)^s(U)&=F|_K(U)/\sim \,\,\,=F(U)/\sim\\
(F^s|_K)(U)&=F^s(U)=F(U)/\sim
\end{align*}
So it is enough to prove that two equivalence relations are the same if $U\in \cC_K$.
If the pullbacks of $s$ and $t$ to each $U_i$ coincide where $\cU=\{U_i\rightarrow U\}$ is a covering
that belongs to $\cC_H$ with $U\in \cC_K$, then there is a refinement $\{U_{ij}\rightarrow U\}$ of $\cU$ that belongs to $\cC_K$, so
that the pullbacks of $s$ and $t$ to each $U_{ij}$ coincide. 

Next, we construct a map $\xi_K(\FresK)\rightarrow \xi_H F$ as follows. Suppose $U$ is an object of $\TT$.
An element of $\xi_K(\FresK)(U)$ is represented by a pair $\sigma=\pair U i s$ such that each $U_i$ is in $\cC_K$ and $s_i\in (\FresK)^s(U_i)=F^s(U_i)$.
The pair also represents an element of $\xi_H F(U)$ since $\cC_K\subseteq \cC_H$.
Also, equivalent representatives of an element of $\xi_K(\FresK)(U)$ represent the same element of $\xi_H F(U)$.
Therefore, we can define $\xi_K(\FresK)(U)\rightarrow \xi_H F(U)$ by sending $[\sigma]$ to $[\sigma]$ (same notation but classes in different equivalence relations).
The definition is compatible with pullbacks, 
so this defines a map of sheaves $\xi_K(\FresK)\rightarrow \xi_H F$. 
From the way it is defined, we see that it is a natural map of sheaves.

Now we prove that it is an isomorphism.
Suppose $\sigma=\pair U i s$ and $\tau=\pair V j t$ represent elements of $\xi_K(\FresK)(U)$ such that $[\sigma]=[\tau]$ in $\xi_H F(U)$.
It implies that the pullbacks of $s_i$ and $t_j$ coincide in $U_i\times_U V_j$.
But $U_i$, $V_j$, and $U_i\times_U V_j$ belong to $\cC_K$. Therefore $\sigma$ and $\tau$ represent the same element of $\xi_K(\FresK)(U)$.
Hence $\xi_K\FresK(U)\rightarrow \xi_H F(U)$ is injective. To prove that it is surjective,
suppose $\sigma=\pair U i s$ represent an element of $\xi_H F(U)$.
Then each $U_i$ is in $\cC_H$. For each $U_i$, there is a covering $\{U_{ij}\rightarrow U_i\}$ such that $U_{ij}\in \cC_K$. 
(For example, $U_{ij}=V_j\times U_i$ 
where $\{V_j\rightarrow X\}$ is a covering of $X$ 
with $V_j\in\cC_K$.)
Then $\{U_{ij}\rightarrow U\}$ is a refinement of $\{U_i\rightarrow U\}$
and the pair $\sigma'=\pair U {ij} s$ where $s_{ij}$ is the pullback of $s_i$ to $U_{ij}$ represent the same element as $\sigma$ does.
But $\sigma'$ also represents an element of $\xi_K (\FresK)(U)$, hence $\xi_K(\FresK)(U)\rightarrow \xi_H F(U)$ is surjective.
\end{proof}

In the big Zariski site $\ZAR{X}$, the big structure sheaf $\OX[b]$ is the sheaf on $\ZAR{X}$ 
that assigns the global sections of $Y$ to each object $Y\xrightarrow f X$.
$$\OX[b](Y)=\OY(Y)$$
For a morphism $g:Z\rightarrow Y$ over $X$, $\OX[b](g):\OX[b](Y)\rightarrow \OX[b](Z)$  is the map of global sections $\OY(Y)\rightarrow \OZ(Z)$ induced by $g$. We simply write $\bOX$ for $\OX[b]$.
If $H$ is a sieve on $X$ belonging to the site, the restriction $\res\bOX H$ is an $H$-sheaf of rings.

From now on, our discussion will be specialized in Zariski topology and presheaves of modules, so an $H$-(pre)sheaf will mean an $H$-(pre)sheaf of $\res\bOX H$-modules unless stated otherwise.
And the notations for categories of $H$-presheaves such as $Pre_H(\ZAR{X})$ will also denote
the categories of presheaves of $\bOX|_H$-modules.

Consider the big Zariski site $\ZAR{X}$ on a scheme $X$. Suppose $H$ is a sieve on $X$ belonging to $\ZAR{X}$ and $F$ is an $H$-presheaf.
For each object $Y\xrightarrow f X$ of $\cC_H$, we define $F|_Y$ to be the restriction of $F$ to the small Zariski site $\zar{Y}$, that is a presheaf on $Y$ in the usual sense.
In other words, $F|_Y(\smap Ug)=F(\smap U{fg})$ for each open immersion $g:U\rightarrow Y$,
and $F|_Y(h)=F(h)$ for each map $h:V\rightarrow U$ of $\zar{Y}$, which may be considered as a map of $\cC_H$.
We will call $F|_Y$ the restriction of $F$ to $Y$ along $f$.
If $G$ is another $H$-presheaf and there is a map of $H$-presheaves $F\rightarrow G$, we get a natural map
$F|_Y\rightarrow G|_Y$.
So the restriction is a functor $Pre_H(\ZAR{X})\rightarrow Pre(\zar{Y})$.
\begin{prop}\label{shffresii}
Suppose $X$ is a scheme, $H$ is a sieve on $X$ belonging to $\ZAR{X}$, and $F$ is an $H$-presheaf.
If $Y$ is an object of $\cC_H$, then $(\xi_H F)|_Y = \xi (F|_Y)$ (equality, not isomorphism) where $\xi_H$ and $\xi$ are sheafification functors of Theorem \ref{adjshv}.
\begin{align*}
\xi_H&:Pre_H(\ZAR{X})\longrightarrow Shv(\ZAR{X})\\
\xi&:Pre(\zar{Y})\longrightarrow Shv(\zar{Y})
\end{align*}
\end{prop}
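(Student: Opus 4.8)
The plan is to show that, when evaluated on any open of $Y$, the two sheaves are built from literally the same data, so that the conclusion is a strict equality rather than a canonical isomorphism. I would organize the argument in three steps, closely following the proof of Proposition \ref{shffres}.

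First I would establish the presheaf equality $F^s|_Y = (F|_Y)^s$ on $Y_{zar}$. For an open immersion $W\to Y$, the composite $W\to Y\xrightarrow{f}X$ lies in $\cC_H$ by Proposition \ref{svcatprop}, since $Y\in\cC_H$; hence every open subscheme of $Y$ automatically belongs to $\cC_H$. Both $F^s(W)$ and $(F|_Y)^s(W)$ are quotients of $F(W)$, so it suffices to compare the two equivalence relations. A covering $\{W_k\to W\}$ in $\ZAR{X}$ by open immersions with each $W_k\in\cC_H$ is precisely a covering of $W$ in $Y_{zar}$: the membership $W_k\in\cC_H$ is automatic, and the structure maps to $X$ factor through $Y$. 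Because $F|_Y$ is defined using the same values and transition maps of $F$, the pullback maps agree, so the relation defining $F^s(W)$ and the one defining $(F|_Y)^s(W)$ coincide, and the induced restriction maps agree. This gives the equality of presheaves.

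Second I would identify the covering data used by the two sheafification functors on an object $U\to Y$ of $Y_{zar}$. Since $U$ is open in $Y$, any covering $\{U_i\to U\}$ in $\ZAR{X}$ by open immersions has each $U_i$ open in $Y$, hence in $\cC_H$, so these are exactly the coverings of $U$ in $Y_{zar}$. The fibered products $U_i\times_U U_j$ and $U_i\times_U V_j$ are computed as intersections of opens and therefore coincide in the two sites, as do the associated projections. Combined with the first step, the sections $s_i$ range over the same module $F^s(U_i)=(F|_Y)^s(U_i)$, the compatibility condition on overlaps is the same condition, and the equivalence relation on pairs is defined by the same pullbacks along the same fibered products. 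Consequently the set of pairs defining $\xi_H F(U)$ and the set defining $\xi(F|_Y)(U)$ are identical, with identical equivalence relation, so $\xi_H F(U)=\xi(F|_Y)(U)$ for every $U\in Y_{zar}$.

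Finally I would check that the restriction maps agree, so that the equality holds as presheaves (hence sheaves) on $Y_{zar}$ and not merely objectwise. For a morphism $g:V\to U$ of $Y_{zar}$, both sheaves send $[\{U_i\to U\},\{s_i\}]$ to $[\{U_i\times_U V\to V\},\{p_i^*s_i\}]$, and since the fibered products and pullbacks coincide, the two presheaf structures are the same; the $\OY$-module structures likewise agree, being induced from $F$ and $\bOX$. I expect the main obstacle to be the bookkeeping needed to justify \emph{strict} equality rather than isomorphism: one must verify that a covering of $U$ by open immersions in the big site is the very same family as a covering in $Y_{zar}$, with identical fibered products and identical structure maps to $X$ via $Y$, so that no implicit canonical isomorphism is silently introduced at any stage of either construction.
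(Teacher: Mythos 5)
Your proposal is correct and follows essentially the same route as the paper's own proof: first identify $(F|_Y)^s$ with $F^s$ on opens of $Y$ (using that coverings agree in $\ZAR{X}$ and $\zar{Y}$), then observe that the pairs, equivalence relations, and restriction maps defining the two sheafifications are literally identical. Your version merely spells out more of the bookkeeping (membership of opens in $\cC_H$ via Proposition \ref{svcatprop}, agreement of fibered products) that the paper treats as immediate.
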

\begin{proof}
Note that $(F|_Y)^s(U)=F|_Y(U)/\sim = F(U)/\sim = F^s(U)$ 
for any open immersion $U\rightarrow Y$
since a covering is a Zariski covering for both sites $\ZAR{X}$ and $\zar{Y}$.
For each open immersion $U\rightarrow Y$,
$(\xi_H F)|_Y(U)=(\xi_H F)(U)$ is the set of pairs $(\{U_i\rightarrow U\},\{s_i\})$ modulo an equivalence relation
where $\{U_i\rightarrow U\}$ is a Zariski cover, and $s_i\in F^s(U_i)$ for each $i$.
Similarly, $\xi (F|_Y)$ is the set of such pairs with 
$s_i\in (F|_Y)^s(U_i)=F^s(U_i)$ modulo an equivalence relation.
Both of them have the same collection of Zariski covers, and the same equivalence relations. 
Therefore, $(\xi_H F)|_Y(U)=\xi(F|_Y)(U)$.
The following diagram is commutative for any $V\rightarrow U$ by definition.
$$\xymatrix{(\xi_H F)|_Y(U)\ar[r]\ar@{=}[d] & (\xi_H F)|_Y(V)\ar@{=}[d] \\ \xi (F|_Y)(U)\ar[r] &\xi (F|_Y)(V)}$$
This completes the proof.
\end{proof}

Let $H$ be a sieve on $X$ belonging to $\ZAR{X}$, and $Y\in\cC_H$. 
If $F$ is an $H$-sheaf, then $F|_Y$ is a sheaf of $\OY$-modules. 
If $g:Z\rightarrow Y$ is a map in $\cC_H$, then for every open immersion $U\rightarrow Y$,
there is a map
$$F|_Y(U) = F(U)\xrightarrow{F(\pi_U)} F(U\times_Y Z) = F|_Z(U\times_Y Z)=g_*F|_Z(U),$$
and the diagram below induced by a map $V\rightarrow U$ commutes.
$$\xymatrix{
\res FY(U) \ar[r]\ar[d] & g_*\res FZ(U) \ar[d] \\
\res FY(V) \ar[r] & g_*\res FZ(V) \\
}$$
Hence there is a map $\rho_{F,g}:F|_Y \rightarrow g_*F|_Z$.
By adjointness, we get a natural map $\lambda_{F,g}:g^*(F|_Y)\rightarrow F|_Z$ of sheaves of $\OZ$-modules.

We can define an extension of a sheaf from the small to the big Zariski site.
Given a sheaf $\cF$ of $\OX$-modules, define $B\cF$, a sheaf on $\ZAR{X}$ by setting $B\cF(Y)=f^*\cF(Y)$ for each object $Y\xrightarrow f X$ of $\ZAR{X}$. If $g:Z\rightarrow Y$ is a map over $X$, $B\cF(g)$ is defined to be the composite
$$f^*\cF(Y)\rightarrow g^*f^*\cF(Z) \xrightarrow\cong (fg)^*\cF(Z)$$
 induced by the map of global sections. The commutativity of the following diagram shows $B\cF(gh)=B\cF(h)B\cF(g)$ for $g:Z \rightarrow  Y$ and $h:W\rightarrow Z$
 $$\xymatrix{
 f^*\cF(Y)\ar[r]\ar[d] & (gh)^*f^*\cF(W) \ar[d] \ar[ddr] \\
 g^*f^*\cF(Z) \ar[r] \ar[d] & h^*g^*f^*\cF(W) \ar[d] \\
 (fg)^*\cF(Z) \ar[r] & h^*(fg)^*\cF(W)\ar[r]  & (fgh)^*\cF(W)
 }$$
\begin{lemma}\label{extres}
Suppose $\cF$ is a sheaf on $\zar{X}$, and $B\cF$ the extension of $\cF$ to $\ZAR{X}$. Then
\begin{enumerate}
\item for each object $Y\xrightarrow f X$ of $\ZAR{X}$, there is a natural isomorphism $\res{B\cF}Y\rightarrow f^*\cF$, thus $B\cF$ is a sheaf,
\item for each map $g:Z\rightarrow Y$ over $X$, the induced map $\lambda:g^*(\res{B\cF}Y) \rightarrow \res{B\cF}Z$ is an isomorphism.
\end{enumerate}
\end{lemma}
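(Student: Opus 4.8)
The plan is to reduce both assertions to the coherence (cocycle) isomorphisms of the pullback pseudofunctor $(-)^*$ on schemes, which are the very isomorphisms already used above to define $B\cF$ on composites.

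For (1), fix $Y\xrightarrow f X$ and an open immersion $j:U\rightarrow Y$. Unwinding the definitions, $\res{B\cF}Y(U)=B\cF(\smap U{fj})=(fj)^*\cF(U)$. I would apply the canonical comparison isomorphism $(fj)^*\cF\cong j^*(f^*\cF)$ and note that, since $j$ is an open immersion, $j^*$ is restriction, so $j^*(f^*\cF)(U)=(f^*\cF)(U)$. This produces a bijection $\theta_Y(U):\res{B\cF}Y(U)\rightarrow (f^*\cF)(U)$. The only nontrivial point is naturality in $U$: for $V\subseteq U$ the comparison isomorphisms must commute with the restriction maps of $\res{B\cF}Y$, and this is exactly the cocycle condition for $(-)^*$ exhibited by the hexagonal diagram used above to prove $B\cF(gh)=B\cF(h)B\cF(g)$. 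Hence $\theta_Y$ is an isomorphism of presheaves onto the sheaf $f^*\cF$.

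In particular $\res{B\cF}Y$ is a sheaf on $\zar Y$ because $f^*\cF$ is. To conclude that $B\cF$ is a sheaf on $\ZAR X$, I would observe that a covering of an object $Y$ in $\ZAR X$ is by definition a family of open immersions, i.e. a Zariski covering of $Y$; therefore the sheaf axiom for $B\cF$ at $Y$ is literally the sheaf axiom for $\res{B\cF}Y$ at the final object of $\zar Y$. Since this holds for every $Y$, $B\cF$ satisfies the sheaf condition.

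For (2), apply part (1) to $Y$ (along $f$) and to $Z$ (along $fg$) to obtain isomorphisms $\theta_Y:\res{B\cF}Y\xrightarrow{\cong} f^*\cF$ and $\theta_Z:\res{B\cF}Z\xrightarrow{\cong}(fg)^*\cF$. Writing $c:g^*f^*\cF\xrightarrow{\cong}(fg)^*\cF$ for the canonical comparison isomorphism, the goal is to show that the square
$$\xymatrix{
g^*(\res{B\cF}Y)\ar[r]^-{\lambda}\ar[d]_-{g^*\theta_Y} & \res{B\cF}Z\ar[d]^-{\theta_Z}\\
g^*f^*\cF\ar[r]_-{c} & (fg)^*\cF
}$$
commutes; since $g^*\theta_Y$, $\theta_Z$, and $c$ are isomorphisms, this forces $\lambda$ to be an isomorphism. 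I would verify commutativity by passing to adjoints. Since $\lambda$ is by construction adjoint to $\rho_{B\cF,g}:\res{B\cF}Y\rightarrow g_*\res{B\cF}Z$ and the $(g^*,g_*)$-adjunction is natural, commutativity of the square is equivalent to the equality $g_*(\theta_Z)\circ\rho_{B\cF,g}=g_*(c)\circ u_{f^*\cF}\circ\theta_Y$ of presheaf maps, where $u$ is the unit of the $(g^*,g_*)$-adjunction (naturality of $u$ is used to move $g^*\theta_Y$ past the unit). This can be checked on sections over each open $U\subseteq Y$: there $\rho_{B\cF,g}(U)$ is the structure map $B\cF(\pi_U)$ along the projection $\pi_U:U\times_Y Z\rightarrow U$, which by the definition of $B\cF$ on morphisms is a pullback-of-sections map followed by a comparison isomorphism, while the right-hand side is the unit (again a pullback-of-sections map) followed by $c$. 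Under the identifications $\theta_Y,\theta_Z$ these two composites agree by naturality of the comparison isomorphisms together with the same cocycle condition invoked in (1). This coherence bookkeeping is the main obstacle; once settled, $\lambda$ is the canonical isomorphism $g^*f^*\cF\cong(fg)^*\cF$ transported through $\theta_Y,\theta_Z$, hence an isomorphism. If one prefers to avoid the diagram chase, the same conclusion follows by checking $\lambda$ on stalks, where it becomes the standard associativity isomorphism $(\cF_x\otimes_{\cO_{X,x}}\cO_{Y,y})\otimes_{\cO_{Y,y}}\cO_{Z,z}\cong \cF_x\otimes_{\cO_{X,x}}\cO_{Z,z}$ at a point $z\in Z$ with images $y=g(z)$ and $x=f(y)$.
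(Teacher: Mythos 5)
Your proposal is correct and follows essentially the same route as the paper: in (1) you identify $\res{B\cF}Y$ with $f^*\cF$ through the comparison isomorphisms $(fj)^*\cF\cong j^*f^*\cF$ and check naturality via the cocycle coherence, and in (2) you place $\lambda$ in the same square with $g^*\theta_Y$, $\theta_Z$, and $c$, so that three isomorphisms force the fourth. The only difference is one of detail: the paper merely asserts that this square commutes, whereas you actually verify it (by passing through the $(g^*,g_*)$-adjunction, or alternatively on stalks), which fills in a step the paper leaves implicit.
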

\begin{proof}
For the first statement, suppose $g:U\rightarrow Y$ is an open immersion. Then $\res{B\cF}Y(U)=B\cF(U)=(fg)^*\cF(U)$.
Define $\res{B\cF}Y(U)\rightarrow f^*\cF(U)$ to be the composite
$(fg)^*\cF(U)\xrightarrow\cong g^*f^*\cF(U)\xrightarrow\cong f^*\cF(U)$.
If $h:V\rightarrow U$ is an open immersion, the following diagram commutes.
$$\xymatrix{
(fg)^*\cF(U)\ar[r] \ar[d] & g^*f^*\cF(U) \ar[r]\ar[d] & f^*\cF(U) \ar[dd] \\
h^*(fg)^*\cF(V) \ar[r] \ar[d] &  h^*g^*f^*\cF(V)\ar[d]  \\
(fgh)^*\cF(V)\ar[r]  & (gh)^*f^*\cF(V) \ar[r] & f^*\cF(V)
}$$
All of the maps involved in the diagram are natural in $\cF$.
This proves the first statement. For the second, note that the following diagram commutes.
$$\xymatrix{
g^*(\res{B\cF}Y) \ar[r] \ar[d]_-\cong & \res{B\cF}Z \ar[d]^-\cong \\
g^*f^*\cF \ar[r]_-\cong & (fg)^*\cF
}$$
Three isomorphisms in the diagram implies that the top arrow is an isomorphism.
\end{proof}
Since the definition of $B$ is functorial in $\cF$, we have defined a functor
$$B:Shv(\zar{X})\rightarrow Shv(\ZAR{X}).$$
\begin{lemma}\label{resext}
Suppose $F$ is a sheaf on $\ZAR{X}$ such that the induced map $\lambda_{F,f}:f^*(F|_X)\rightarrow F|_Y$ is an isomorphism for every object $Y\xrightarrow f X$ of $\ZAR{X}$.
Then there is an isomorphism $\eta:B(F|_X) \rightarrow F$ that is natural in the sense that if $G$ is another such sheaf, and there is a map $\alpha:F\rightarrow G$, then
the following diagram commutes.
$$\xymatrix{
B(F|_X) \ar[r]^-\eta \ar[d]_{B\alpha|_X} &
F \ar[d]^\alpha\\
B(G|_X) \ar[r]_-\eta &
G
}$$
\end{lemma}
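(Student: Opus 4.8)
The plan is to assemble $\eta$ out of the hypothesis isomorphisms $\lambda_{F,f}$ together with the natural identification supplied by Lemma \ref{extres}. Fix an object $Y\xrightarrow{f}X$ of $\ZAR{X}$. By Lemma \ref{extres}(1) there is a natural isomorphism $B(F|_X)|_Y\xrightarrow{\cong}f^*(F|_X)$ of sheaves on $\zar{Y}$, and by hypothesis $\lambda_{F,f}\colon f^*(F|_X)\to F|_Y$ is an isomorphism. Composing them yields an isomorphism of sheaves on $\zar{Y}$, and evaluating at the final object $Y\in\zar{Y}$ produces $\eta(Y)\colon B(F|_X)(Y)\to F|_Y(Y)=F(Y)$. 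Because $B(F|_X)(Y)=f^*(F|_X)(Y)$ and the Lemma \ref{extres}(1) map is the identity on global sections over $Y$, the map $\eta(Y)$ is simply the $Y$-component of $\lambda_{F,f}$; in particular it is a bijection, since an isomorphism of sheaves is an isomorphism on global sections.

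Next I would verify that the collection $\{\eta(Y)\}$ is a morphism of sheaves on $\ZAR{X}$, i.e.\ that for every map $g\colon Z\to Y$ over $X$ one has $F(g)\,\eta(Y)=\eta(Z)\,B(F|_X)(g)$. The useful observation is that both vertical maps are components of the $\rho$-maps: unwinding the definition of $\rho_{F,g}$, its component at the final object $Y$ is exactly $F(g)$ (via $Y\times_Y Z\cong Z$, under which the projection $\pi_Y$ corresponds to $g$), and likewise $B(F|_X)(g)$ is the top component of $\rho_{B(F|_X),g}$. Thus the square reduces to the compatibility of $\eta(Y)$ and $\eta(Z)$, built from the structure maps $f$ and $fg$, with the transition maps along $g$. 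This in turn rests on a cocycle identity for the $\lambda$'s: that $\lambda_{F,fg}$ equals $\lambda_{F,g}\circ g^*(\lambda_{F,f})$ up to the canonical isomorphism $(fg)^*(F|_X)\cong g^*f^*(F|_X)$. I would prove this by passing to adjoints, where it suffices to check the corresponding statement for the $\rho$-maps, and there $\rho_{F,fg}(U)=F(\pi)$ for the projection $\pi\colon U\times_X Z\to U$ factors as $U\times_X Z\xrightarrow{1\times g}U\times_X Y\xrightarrow{\pi'}U$, giving $F(\pi)=F(1\times g)\,F(\pi')$ and hence the required factorization. This cocycle verification is the heart of the argument and the step I expect to be the main obstacle; everything else is formal.

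Finally, $\eta$ is an isomorphism of sheaves because each $\eta(Y)$ is bijective. For naturality in $F$, suppose $\alpha\colon F\to G$ is a map between two sheaves satisfying the hypothesis. The asserted square commutes once one knows that $\lambda_{\cdot,f}$ is natural in the sheaf variable, namely that the square with horizontal maps $\lambda_{F,f},\lambda_{G,f}$ and vertical maps $f^*(\alpha|_X),\alpha|_Y$ commutes; this holds because $\lambda$ is adjoint to $\rho$ and $\rho_{F,f}(U)=F(\pi_U)$ is visibly natural in $F$. Evaluating at $Y$, using $\alpha|_Y(Y)=\alpha(Y)$ together with the naturality in $\cF$ of the Lemma \ref{extres}(1) isomorphism, yields precisely the commuting square in the statement. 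These last verifications are routine.
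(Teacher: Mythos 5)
Your proposal is correct, and its skeleton coincides with the paper's own proof: both define $\eta(Y)=\lambda_{F,f}(Y)$ (your detour through Lemma \ref{extres}(1) collapses to this, since $B(F|_X)(Y)=f^*(F|_X)(Y)$ holds by definition of $B$); both reduce the sheaf-morphism square for $g:Z\rightarrow Y$ to (i) naturality of the unit $1\rightarrow g_*g^*$, (ii) a cocycle identity relating $\lambda_{F,fg}$ to $\lambda_{F,g}\circ g^*\lambda_{F,f}$ through the canonical isomorphism $(fg)^*\cong g^*f^*$, and (iii) the adjunction relation expressing $F(g)$ via $\rho_{F,g}$ and $\lambda_{F,g}$; and both deduce naturality in $F$ from naturality of $\lambda_{F,f}$. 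The one point of genuine divergence is step (ii), which you rightly identify as the heart of the matter. The paper proves that square by a concrete local computation: the corresponding diagram of sheaves commutes because on stalks it is the evident diagram of modules built from $A\rightarrow B\rightarrow C$, $M\rightarrow N\rightarrow L$. You instead transport the identity through the adjunction: on the $\rho$-side it is pure functoriality of $F$, namely $F(\pi)=F(1\times g)F(\pi')$ for the factorization $U\times_X Z\rightarrow U\times_X Y\rightarrow U$, giving $\rho_{F,fg}=f_*(\rho_{F,g})\circ\rho_{F,f}$, whose mate is the desired $\lambda$-identity. Your route avoids stalks entirely, but it silently relies on the standard compatibility of mates with composition of adjunctions --- in particular, that the canonical isomorphism $(fg)^*\cong g^*f^*$ used in defining $B$ agrees with the one induced by $(fg)_*=f_*g_*$ under the adjunctions. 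That diagram chase is precisely what the paper's stalk computation replaces, so the two proofs spend their effort in the same place: yours is the more formal/categorical version, the paper's the more concrete one, and both are complete once that verification is carried out.
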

\begin{proof}
For each object $Y\xrightarrow f X$, define $\eta(Y)=\lambda_{F,f}(Y)$.
$$B(F|_X)(Y) = f^*(F|_X)(Y) \xrightarrow[\cong]{\lambda_{F,f}(Y)} F|_Y(Y) = F(Y)$$
To show that $\eta$ is an isomorphism of functors, we need to show the commutativity of the following diagram for
every map $g:Z\rightarrow Y$ over $X$.
$$\xymatrix{
B(F|_X)(Y) \ar[r] \ar[d] &
F(Y) \ar[d] \\
B(F|_X)(Z) \ar[r] &
F(Z)
}$$
It is enough to show the commutativity of the following diagram.
$$\xymatrix{
f^*(F|_X)(Y) \ar[rr]^{\lambda_f(Y)} \ar[d]&&
F|_Y(Y) \ar[d] \ar@/^2cm/[dd]^{F(g)}\\
g^*f^*(F|_X)(Z) \ar[rr]^{g^*\lambda_f(Z)} \ar[d] &&
g^*(F|_Y)(Z) \ar[d]_{\lambda_g(Z)} \\
(fg)^*(F|_X)(Z) \ar[rr]_{\lambda_{fg}(Z)} &&
F|_Z(Z)
}$$
The top square is commutative since it is induced by the natural transformation $1\rightarrow g_*g^*$.
The bottom square is commutative since the corresponding diagram of sheaves before taking the global sections commutes.
On the level of stalks, it corresponds to the diagram of modules
$$\xymatrix {
C\otimes_B(B\otimes_A M) \ar[r] \ar[d] & C\otimes_A N \ar[d]\\
C\otimes_A M \ar[r] &L
}$$
induced by rings $A,B,C$, an $A$-module $M$, a $B$-module $N$, and a $C$-module $L$, together with
morphisms of rings $A\rightarrow B\rightarrow C$, a $B$-linear map $M\rightarrow N$, and a $C$-linear map $N\rightarrow L$.
Finally, the part on the right is obtained by taking the global sections of the following diagram of sheaves,
$$\xymatrix{
F|_Y \ar[d] \ar[dr]^{\rho_g}\\
g_*g^*F|_Y \ar[r]_{g_*\lambda_g} &
g_*F|_Z
}$$
which is commutative since $\rho$ and $\lambda$ corresponds to each other in the adjoint relationship of $g_*$ and $g^*$.
The naturality of $\eta$ follows from the naturality of $\lambda_{F,f}$.
\end{proof}

Let $f:Y\rightarrow X$ be a map of schemes and $H$ a sieve on $X$ belonging to $\ZAR{X}$. 
We will define a pullback functor 
$$f^*:Pre_H(\ZAR{X}) \rightarrow Pre_{f^*H}(\ZAR{Y}).$$
Recall that $f^*H$ is the sieve on $Y$ belonging to $\ZAR{Y}$ such that $Z\xrightarrow g Y$ is an object of $\cC_{f^*H}$ if and only if the composition $Z\xrightarrow g Y \xrightarrow f X$ is in $\cC_H$.
Therefore, there is a functor $f_*:\cC_{f^*H} \rightarrow \cC_H$ defined by composition with $f$.
Then for an $H$-presheaf $F$, $f^*F$ is defined to be $Ff_*^{op}$, that is, $f^*F(\smap Zg)= F(\smap Z{fg})$ for each object $Z\xrightarrow g Y$ of $\cC_{f^*H}$ and $f^*F(h) = F(h)$ for each morphism $h$ of $\cC_{f^*H}$, which may be considered as a morphism of $\cC_H$ as well. In addition, for a map $\alpha:E\rightarrow F$ of $H$-presheaves, $f^*\alpha:f^*E\rightarrow f^*F$ is defined by $(f^*\alpha)(\smap Zg) = \alpha(\smap Z{fg})$ for each object $Z\xrightarrow g Y$ of $\cC_{f^*H}$. The following diagram commutes for any morphism $h$ of $\cC_H$, thus $f^*\alpha$ is indeed a map of $f^*H$-presheaves.
$$\xymatrix{
f^*E(Z) \ar@{=}[r] \ar[d]_{f^*E(h)} &
E(Z)\ar[r]^{\alpha(Z)} \ar[d]_{E(h)} &
F(Z) \ar@{=}[r] \ar[d]^{F(h)} &
f^*F(Z) \ar[d]^{f^*F(h)} \\
f^*E(W) \ar@{=}[r] &
E(W) \ar[r]_{\alpha(W)} &
F(W) \ar@{=}[r] &
f^*F(W)
}$$
If $\beta:F\rightarrow G$ is another map of $H$-presheaves, then $f^*(\alpha\beta)=f^*\alpha f^*\beta$ by definition. 
Therefore, $f^*$ is a functor. 
Note that if $F$ is an $H$-sheaf, then $f^*F$ is an $f^*H$-sheaf. 
The restriction of $f^*$ to $H$-sheaves will also be written as $f^*$.

We complete the section with a series of lemmas concerning the properties of the functor $f^*$.
\begin{lemma}\label{pbres}
Let $H$ be a sieve on $X$ belonging to $\ZAR{X}$, $E$ an $H$-presheaf, and $Y\xrightarrow fX$ a map of schemes.
Then $(f^*E)|_Z = E|_Z$ for any object $Z\xrightarrow gY$ of $\cC_{f^*H}$.
\end{lemma}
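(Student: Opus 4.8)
The plan is to prove the equality directly, by unwinding the definitions of the pullback functor $f^*$ and of the restriction of an $H$-presheaf to a scheme, and then checking that the two presheaves $(f^*E)|_Z$ and $E|_Z$ on the small Zariski site $\zar Z$ agree on objects and on morphisms. Both are functors $(\zar Z)^{op}\rightarrow\set$ with the same domain, so it suffices to verify that they assign the same value to every open immersion $k:U\rightarrow Z$ and the same map to every morphism of $\zar Z$.

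First I would record that $Z\xrightarrow g Y$ being an object of $\cC_{f^*H}$ means exactly that the composite $Z\xrightarrow{fg}X$ is an object of $\cC_H$; thus $(f^*E)|_Z$ is formed by restricting the $f^*H$-presheaf $f^*E$ along $g$, while $E|_Z$ is formed by restricting $E$ along $fg$. Fixing an open immersion $k:U\rightarrow Z$, Proposition \ref{svcatprop} guarantees that $U\xrightarrow{gk}Y$ lies in $\cC_{f^*H}$ and $U\xrightarrow{fgk}X$ lies in $\cC_H$, so all the values below are defined. The definition of restriction followed by the definition of $f^*E$ gives
$$(f^*E)|_Z(\smap U k)=f^*E(\smap U{gk})=E(\smap U{fgk}),$$
while restricting $E$ along $fg$ gives $E|_Z(\smap U k)=E(\smap U{(fg)k})=E(\smap U{fgk})$. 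The two right-hand sides are the same set, so the object values agree.

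Next I would check morphisms. Any morphism $h$ of $\zar Z$ may also be regarded as a morphism of $\cC_{f^*H}$ (and of $\cC_H$), and both restriction functors act on it as the identity, so $(f^*E)|_Z(h)=f^*E(h)=E(h)$ and $E|_Z(h)=E(h)$ by the definitions of $f^*$ and of restriction on morphisms. Hence the two presheaves agree on morphisms as well, and the equality of functors $(f^*E)|_Z=E|_Z$ follows.

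I do not expect a genuine obstacle here, as the argument is pure bookkeeping; the only point that requires care is keeping the structure maps straight, since each open immersion $U\rightarrow Z$ plays a double role — as an object over $Y$ via $gk$ in the definition of $f^*E$, and as an object over $X$ via $fgk$ in the definition of $E|_Z$. It is precisely the associativity $f(gk)=(fg)k$ of composition that makes the two constructions coincide on the nose, yielding an honest equality rather than a mere isomorphism.
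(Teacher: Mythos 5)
Your proof is correct and follows the same route as the paper, which simply states that the equality follows directly from the definitions of the pullback and restriction functors; you have written out exactly those definitional computations, with the associativity $f(gk)=(fg)k$ doing the work. Nothing is missing.
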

\begin{proof}
This follows directly from the definitions of the pullback and restriction functors.
\end{proof}

\begin{lemma}\label{strpb}
Let $f:Y\rightarrow X$ and $g:Z\rightarrow Y$ be maps of schemes and $H$ a sieve on $X$ belonging to $\ZAR{X}$. Then $(fg)^*H$ and $g^*f^*H$ are the same sieves, and $(fg)^*=g^*f^*$ (equality, not natural isomorphism) as functors from $Pre_H(\ZAR{X})$ to $ Pre_{(fg)^*H}(\ZAR{Z})$.
\end{lemma}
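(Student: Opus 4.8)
The plan is to unwind the definitions of the two pullback functors and observe that, because composition of morphisms of schemes is strictly associative, the two descriptions coincide on the nose rather than merely up to natural isomorphism. Since the content is entirely definitional, I expect no genuine obstacle; the only points requiring care are the bookkeeping that identifies the common source category correctly and the verification that everything really is equality.

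First I would establish equality of the sieves. Both $(fg)^*H$ and $g^*f^*H$ are sieves on $Z$, i.e. subfunctors of $\Hom_{\ZAR{Z}}(-,Z)$, so it suffices to evaluate them on an arbitrary object $W$ of $\ZAR{Z}$. By the definition of sieve pullback, a map $h\colon W\rightarrow Z$ lies in $((fg)^*H)(W)$ iff the composite $(fg)h$ lies in $H(W)$, whereas $h\in(g^*f^*H)(W)$ iff $gh\in(f^*H)(W)$, i.e. iff $f(gh)\in H(W)$. Since $(fg)h=f(gh)$, the two subsets of $\Hom(W,Z)$ agree, so the sieves are literally equal; consequently the categories $\cC_{(fg)^*H}$ and $\cC_{g^*f^*H}$ coincide, together with the manner in which each of their objects is regarded as living over $X$.

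Next I would check equality of the functors on this common source category. For an object $W\xrightarrow{h}Z$ of $\cC_{(fg)^*H}$, the definition $f^*F=Ff_*^{op}$ gives $(fg)^*F(\smap Wh)=F(\smap W{(fg)h})$ on one side and $(g^*f^*F)(\smap Wh)=f^*F(\smap W{gh})=F(\smap W{f(gh)})$ on the other, and associativity makes these equal. On a morphism $m$ of $\cC_{(fg)^*H}$, viewed as a morphism of $\cC_H$, both functors return $F(m)$, so they agree on morphisms as well. The identical computation applied to a map $\alpha\colon E\rightarrow F$ of $H$-presheaves yields $((fg)^*\alpha)(\smap Wh)=\alpha(\smap W{(fg)h})=(g^*f^*\alpha)(\smap Wh)$, so the two functors agree on morphisms of presheaves, whence $(fg)^*=g^*f^*$.

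Finally I would confirm compatibility of the $\bOX|_H$-module structures transported by the two functors. Because the big structure sheaf assigns to each object the ring of its global sections, independent of the structure map to the base, one has $f^*(\bOX|_H)=\bOY|_{f^*H}$ and likewise $(fg)^*(\bOX|_H)=\bOZ|_{(fg)^*H}=g^*f^*(\bOX|_H)$; hence the module actions obtained by the two routes coincide, completing the identification of $(fg)^*$ and $g^*f^*$ as functors $Pre_H(\ZAR{X})\rightarrow Pre_{(fg)^*H}(\ZAR{Z})$.
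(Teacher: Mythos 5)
Your proposal is correct and follows essentially the same route as the paper's proof: both arguments unwind the definitions of sieve pullback and presheaf pullback and reduce everything to the strict associativity of composition of scheme morphisms, checking equality on objects, on morphisms of the sieve category, and on maps of presheaves. Your additional verification that the $\bOX|_H$-module structures agree under both functors is a harmless extra bit of bookkeeping the paper leaves implicit.
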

\begin{proof}
A map $h:W\rightarrow Z$ is in the sieve $(fg)^*H$ if and only if $fgh$ is in $H$, and this condition is equivalent for $h$ to be in the sieve $g^*f^*H$. Hence $(fg)^*H=g^*f^*H$. The functors $(fg)_*$ and $f_*g_*$ from $\cC_{(fg)^*H}$ to $\cC_H$ are eqaul because both are defined by composition with $fg$. Therefore, for every $H$-presheaf $E$, $$(fg)^*E = E(fg)_*^{op}=Ef_*^{op}g_*^{op} = g^*f^*E,$$
and for every morphism $\alpha:E\rightarrow F$ and every object $W\xrightarrow h Z$ of $\cC_{(fg)^*H}$,
$$((fg)^*\alpha)(\smap Wh) = \alpha(\smap W{fgh})= (f^*\alpha)(\smap W{gh})=g^*f^*\alpha(\smap Wh).$$
\end{proof}

\begin{lemma}\label{Bf}
Suppose $f:Y\rightarrow X$ is a map of schemes and $\cF$ an $\OX$-module. Then there is a natural isomorphism $Bf^*\cF\xrightarrow \cong f^*B\cF $ where the first $B$ is the extension functor $Pre(\zar{Y})\rightarrow Pre(\ZAR{Y})$ and the second $B$ is $Pre(\zar{X})\rightarrow Pre(\ZAR{X})$.
\end{lemma}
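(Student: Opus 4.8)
The plan is to build the isomorphism objectwise from the canonical comparison isomorphism for a composite of pullbacks, and then to reduce the two compatibility checks (with structure maps, and with $\cF$) to the coherence of the pullback pseudofunctor that already appears in the proof of Lemma \ref{resext}. First I would unwind both sides on an object $W\xrightarrow h Y$ of $\ZAR Y$. By the definitions of the pullback functor $f^*$ (for the full sieve) and of $B$ over the base $X$,
$$ f^*B\cF(\smap Wh)=B\cF(\smap W{fh})=(fh)^*\cF(W), $$
whereas $B$ over the base $Y$ gives
$$ Bf^*\cF(\smap Wh)=h^*(f^*\cF)(W)=h^*f^*\cF(W). $$
Thus the two presheaves return the global sections of the sheaves $h^*f^*\cF$ and $(fh)^*\cF$ on $W$, and these are canonically identified by the usual comparison isomorphism $c_{h,f}\colon h^*f^*\cF\xrightarrow\cong(fh)^*\cF$. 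I would define $\theta(\smap Wh)$ to be the induced map on global sections; it is an isomorphism of $\OV$-modules for each object because $c_{h,f}$ is.

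Next I would verify that $\theta$ is a morphism of presheaves, i.e. that for every $\psi\colon W'\to W$ over $Y$ the square
$$\xymatrix{
Bf^*\cF(\smap Wh)\ar[r]^{\theta}\ar[d]&f^*B\cF(\smap Wh)\ar[d]\\
Bf^*\cF(\smap{W'}{h\psi})\ar[r]_{\theta}&f^*B\cF(\smap{W'}{h\psi})
}$$
commutes. By the definition of $B$, each vertical arrow is the pullback-of-sections map along $\psi$ followed by a comparison isomorphism. Writing $c_{v,u}\colon v^*u^*\xrightarrow\cong(uv)^*$ for the canonical isomorphism attached to a composite, and using that the pullback-of-sections map is natural in the sheaf on $W$ (applied to $c_{h,f}$), the commutativity reduces to the equality of two composites of comparison isomorphisms for the tower $W'\xrightarrow\psi W\xrightarrow h Y\xrightarrow f X$, namely
$$ c_{\psi,fh}\circ\psi^*(c_{h,f})=c_{h\psi,f}\circ(c_{\psi,h})_{f^*\cF}, $$
both maps $\psi^*h^*f^*\cF\to(fh\psi)^*\cF$. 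This is exactly the associativity coherence of the pullback pseudofunctor; on stalks it is the associativity $C\otimes_B(B\otimes_A M)\cong C\otimes_A M$ already invoked in the proof of Lemma \ref{resext}. Hence $\theta$ is a map of presheaves, and being an isomorphism on each object, an isomorphism in $Pre(\ZAR Y)$.

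Finally, for naturality in $\cF$, given $\alpha\colon\cF_1\to\cF_2$ of $\OX$-modules the required square commutes objectwise because on $W\xrightarrow h Y$ it is induced by the naturality of $c_{h,f}$ with respect to $\alpha$ (the comparison isomorphism $h^*f^*(-)\cong(fh)^*(-)$ is a natural transformation). The main obstacle is entirely bookkeeping: one must keep the pullback-of-sections maps cleanly separated from the comparison isomorphisms, and correctly match the nested pair $\psi^*(c_{h,f})$ and $(c_{\psi,h})_{f^*\cF}$ against the single associativity coherence; once that coherence is isolated, every remaining step is a routine diagram chase.
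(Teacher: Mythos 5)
Your proposal follows essentially the same route as the paper's proof: both identify the two presheaves objectwise as $h^*f^*\cF(W)$ and $(fh)^*\cF(W)$, define the map via the canonical comparison isomorphism $h^*f^*\rightarrow (fh)^*$, and reduce compatibility with restriction maps to the associativity coherence of these comparison isomorphisms (the paper records this as one commutative diagram, which you have correctly unpacked into the identity $c_{\psi,fh}\circ\psi^*(c_{h,f})=c_{h\psi,f}\circ(c_{\psi,h})_{f^*\cF}$). Your explicit check of naturality in $\cF$ is a point the paper leaves implicit, but the argument is the same.
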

\begin{proof}
For each scheme $Z\xrightarrow g Y$ over $Y$,
$(Bf^*\cF)(\smap Zg)=(g^*f^*\cF)(Z)$, and $(f^*B\cF)(\smap Zg) = (B\cF)(\smap Z{fg}) = ((fg)^*\cF)(Z)$.
Define $\alpha_g:(Bf^*\cF)(\smap Zg)\rightarrow (f^*B\cF)(\smap Zg)$ to be the map $(g^*f^*\cF)(Z) \rightarrow((fg)^*\cF)(Z)$ induced by the natural isomorphism $g^*f^*\rightarrow (fg)^*$. If $h:W\rightarrow Z$ is any map over $Y$, then the following diagram commutes.
$$\xymatrix{
(g^*f^*\cF)(Z) \ar[r]^{\alpha_g}_\cong \ar[d] \ar@/_2cm/[dd]_{(Bf^*\cF)(h)}&
((fg)^*\cF)(Z) \ar[d] \ar@/^2cm/[dd]^{(f^*B\cF)(h)}\\
(h^*g^*f^*\cF)(W) \ar[r]^\cong \ar[d]^\cong &
(h^*(fg)^*\cF)(W) \ar[d]_\cong \\
((gh)^*f^*\cF)(W) \ar[r]^{\alpha_{gh}}_\cong &
((fgh)^*\cF)(W)
}$$
\end{proof}

\begin{lemma}\label{xipb}
Let $f:Y\rightarrow X$ be a map of schemes, $H$ a sieve on $X$ belonging to $\ZAR{X}$.
Then $\xi_{f^*H}f^* = f^*\xi_H$ as functors.
This is a strict equality, not isomorphism.
\begin{gather*}
Pre_H(\ZAR{X})  \xrightarrow{f^*} Pre_{f^*H}(\ZAR{Y})
\xrightarrow{\xi_{f^*H}} Shv(\ZAR{Y}) \\
Pre_H(\ZAR{X})  \xrightarrow{\xi_H} Shv(\ZAR{X}) 
\xrightarrow{f^*} Shv(\ZAR{Y})
\end{gather*}
\end{lemma}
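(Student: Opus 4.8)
The plan is to prove the equality directly, by unwinding both constructions on objects and on morphisms and observing that every ingredient entering the definition of the sheafification functor — the admissible coverings, the sections of the separated presheaf, the fibre products, and the equivalence relation — is left literally unchanged upon passing through $f^*$. The only external inputs I expect to need are the definition of the sieve $f^*H$, Proposition \ref{svcatprop}, and Lemma \ref{pbres}.

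First I would fix an $H$-presheaf $F$ and an object $W \xrightarrow{h} Y$ of $\ZAR{Y}$, and write out the two sets $f^*(\xi_H F)(\smap Wh) = (\xi_H F)(\smap W{fh})$ and $\xi_{f^*H}(f^*F)(\smap Wh)$. Both are sets of equivalence classes of pairs $\pair W i s$ with $\{W_i \to W\}$ a Zariski covering. For the right-hand side the requirement on the cover is that $W_i \in \cC_H$ via the composite $W_i \to W \xrightarrow{fh} X$, while for the left-hand side it is that $W_i \in \cC_{f^*H}$ via $W_i \to W \xrightarrow{h} Y$; by the very definition of $f^*H$ these two conditions coincide, so the two constructions admit exactly the same coverings.

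Next I would check that the section data agree. Since each admissible $W_i$ lies in $\cC_{f^*H}$, Lemma \ref{pbres} gives $(f^*F)|_{W_i} = F|_{W_i}$, and because the separation construction at $W_i$ uses only Zariski coverings of $W_i$, which are the same in $\ZAR{Y}$ and $\ZAR{X}$ (cf. the opening observation in the proof of Proposition \ref{shffresii}), this yields $(f^*F)^s(W_i) = F^s(W_i)$ on the nose. The fibre products $W_i \times_W W_j$ are the same schemes whether formed in $\ZAR{Y}$ or $\ZAR{X}$, since they depend only on the maps to $W$; they again lie in $\cC_{f^*H} = \cC_H$ by Proposition \ref{svcatprop}, and the same identification of separated sections holds there. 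Hence the compatibility condition on overlaps and the equivalence relation between pairs are identical on both sides, and $\xi_{f^*H}(f^*F)(\smap Wh)$ equals $f^*(\xi_H F)(\smap Wh)$ as sets.

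Finally I would verify that the structure maps and the action on morphisms match. For a map $k : V \to W$ over $Y$, both functors send the class of $\pair W i s$ to the class of the pair obtained by pulling back along the projections $W_i \times_W V \to W_i$, formed with the same fibre products and the same pullback maps, so the transition maps agree; thus the two sheaves coincide as presheaves, hence as sheaves. For a morphism $\gamma : F_1 \to F_2$ of $H$-presheaves, the equalities $(f^*F_j)^s(W_i) = F_j^s(W_i)$ identify $(f^*\gamma)^s$ with $\gamma^s$ on each $W_i$, whence $\xi_{f^*H}(f^*\gamma)$ and $f^*(\xi_H \gamma)$ are given by the same formula. I do not anticipate a genuine obstacle; the one point demanding real care is the claim that the separation functor $(-)^s$ and the relevant fibre products are insensitive to whether the chosen base is $X$ or $Y$, and this is exactly what Lemma \ref{pbres} together with the covering-invariance of the separation construction is there to guarantee.
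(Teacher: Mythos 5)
Your proposal is correct and follows essentially the same route as the paper's own proof: unwind both functors on an arbitrary object of $\ZAR{Y}$, note that the admissible coverings, separated sections, fibre products, and equivalence relations literally coincide (using that $W_i\rightarrow W\rightarrow Y$ lies in $f^*H$ iff the composite to $X$ lies in $H$), and then check the structure maps and the action on morphisms agree. The only difference is cosmetic: where the paper simply asserts $(f^*E)^s(U_i)=E^s(U_i)$, you justify it via Lemma \ref{pbres} and the invariance of Zariski coverings, which is a welcome bit of extra care.
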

\begin{proof}
Suppose $E$ is an $H$-presheaf and $U\xrightarrow g Y$ an object of $\ZAR{Y}$.
By definition, $(\xi_{f^*H}f^*E)(U)$ is the set of equivalence classes $[\{U_i\rightarrow U\},\{s_i\}]$
such that for each $i$, $U_i\rightarrow U \xrightarrow g Y$ is an object of $f^*H$, 
$s_i\in (f^*E)^s(U_i)$, and the pullbacks of $s_i$ and $s_j$ to $U_i\times_U U_j$ coincide.
Two pairs $(\{U_i\rightarrow U\},\{s_i\})$ and $(\{V_j\rightarrow U\},\{t_j\})$ represent the same class 
if and only if the pullbacks of $s_i$ and $t_j$ to $U_i\times_U V_j$ coincide. 
Note that $U_i\rightarrow U\xrightarrow gY$ is an object of $f^*H$ 
if and only if $U_i\rightarrow U\xrightarrow {fg}X$ is an object of $H$, 
and $(f^*E)^s(U_i) = E^s(U_i)$.
So a pair $(\{U_i\rightarrow U\},\{s_i\})$ represents an element of $(\xi_{f^*H}f^*E)(U)$ 
if and only if it represents an element of $(\xi_HE)(U)=(f^*\xi_HE)(U)$. 
Also, the equivalence relations defining 
$(\xi_{f^*H}f^*E)(U)$ and $(f^*\xi_HE)(U)$ are the same. 
Therefore, $(\xi_{f^*H}f^*E)(U) = (f^*\xi_HE)(U)$. 
Next, if $h:V\rightarrow U$ is a morphisms of $\ZAR{Y}$, 
both $(\xi_{f^*H}f^*E)(h)$ and $(f^*\xi_HE)(h)$ send the class $[\{U_i\rightarrow U\},\{s_i\}]$
to the class $[\{U_i\times_UV\},\{s_i|_{U_i\times_UV}\}]$. 
Therefore, $(\xi_{f^*H}f^*E)(h)=(f^*\xi_HE)(h)$. 
This shows that $\xi_{f^*H}f^*$ and $f^*\xi_H$ agree on objects.
To show that they also agree on morphisms, 
suppose $\alpha:E\rightarrow F$ is a map of $H$-presheaves. 
For each object $U\xrightarrow gY$ of $\ZAR{Y}$, 
both $(\xi_{f^*H}f^*\alpha)(U)$ and $(f^*\xi_H\alpha)(U)$ send the class 
$[\{U_i\rightarrow U\},\{s_i\}]$ to $[\{U_i\rightarrow U\},\{\alpha^ss_i\}]$. 
Therefore, $\xi_{f^*H}f^*\alpha = f^*\xi_H\alpha$. 
This completes the proof.
\end{proof}

\section{Standard vector bundles}\label{svb}
In this section, we give the definition of the category of standard vector bundles and prove its properties.
This category is equivalent to the category of usual vector bundles and satisfies various strict functoriality.
Among other things, it has strictly functorial pullback functor and 
strictly associative tensor product, which is also strictly commutative 
with line bundles.
All sheaves are assumed to be sheaves of modules.
\subsection{The definition of standard vector bundles}\label{dsvb}
If $A$ is a commutative ring, we call an $A$-module of the form 
$$A^n=\{(a_1,\ldots,a_n)|a_i\in A, i=1,\ldots,n\}$$
a {\em standard  free} $A$-module. 
A finitely generated $A$-module is free if and only if it is isomorphic to a standard free module.
For a scheme $Y$ and a presheaf $E$ on the big Zariski site $\ZAR{Y}$,
a map $\OY[n]\rightarrow E$ is completely determined by $n$ elements of $E(Y)$, 
the images of the standard basis of $\OY(Y)^n$.
Suppose $H$ is a sieve on a scheme $X$ belonging to $\ZAR{X}$, and suppose $E$ is an $H$-presheaf.
If $Y\xrightarrow f X$ is an object of $\cC_H$, 
then $f^*H = H_Y$ where $H_Y$ is the sieve $\Hom_{\ZAR{Y}}(-,Y)$, 
and $f^*E$ is a presheaf on the big Zariski site $\ZAR{Y}$.
If, furthermore, $E(Y)$ is a standard free module, then the standard basis of $E(Y)=f^*E(Y)$ induces a map $\OY[n]\rightarrow f^*E$ of presheaves on $\ZAR{Y}$ such that the map on $Y$ is the identity.

\begin{defn}\label{defnvb}
Suppose $X$ is a scheme. A {\em standard vector bundle on $X$} is a pair $(H,E)$ where
$H$ is a sieve on $X$ that belongs to the site $\ZAR{X}$, and $E$ is an $H$-presheaf on $\ZAR{X}$ satisfying the following property: for each object $Y\xrightarrow f X$ of $\cC_H$, there exists an integer $n$ such that
$E(Y)=\OY(Y)^n$, i.e., $E(Y)$ is a standard free module, and the map $\epsilon_f:\OY[n]\rightarrow f^*E$ induced by the standard basis of $E(Y)$ is an isomorphism.
If the integer $n$ is the same for all objects of $\cC_H$, then it is called the {\em rank} of $E$. A standard vector bundle of rank 1 is called a {\em standard line bundle}.
The category of standard vector bundles on $X$ is denoted by $\bva(X)$. The set of  morphisms from $(H,E)$ to $(K,F)$ is defined to be the set of morphisms between the associated sheaves,
$$\Hom_{\bva(X)}((H,E),(K,F)) = \Hom_{Shv(\ZAR{X})}(\xi_H E, \xi_K F).$$
\end{defn}
In this definition, we required the value of $E$ at every object to be a standard free module.
This is the key requirement for the properties we wish to prove in Theorem \ref{tp}.
For simpler notation, we will sometimes write $E$ for $(H,E)$. When we do so, we will call $E$ a standard vector bundle and $H$ the associated sieve, or we will simply call $E$ an {\em $H$-vector bundle} (or {\em $H$-line bundle} if the rank is 1).
Note that $E$ is actually an $H$-sheaf, not just an $H$-presheaf since every pullback of $E$ is a sheaf. If $g:Z\rightarrow Y$ is a morphism of $\cC_H$, then $E(Y)$ and $E(Z)$ have the same rank.

\begin{example}\label{exvb}
The simplest example of standard vector bundles is the {\em trivial} standard vector bundle $\OX[n]$ of rank $n\ge0$. It is defined as an $H_X$-vector bundle.
For each object $Y\rightarrow X$ of $\ZAR{X}$, $\bOX[n](Y)=\OY(Y)^n$,
and for each map $g:Z\rightarrow Y$ over $X$, the restriction map $\bOX[n](g):\bOX[n](Y)\rightarrow \bOX[n](Z)$
is induced by the map $\OY(Y)\rightarrow \OZ(Z)$ of global sections of the structure sheaves.
The trivial standard vector bundle of rank 0 will be denoted by 0 and called the zero bundle.
\end{example}

There is a way to produce a standard vector bundle from a locally free sheaf on a scheme. The next lemma is useful for various constructions in this section. The idea is that a {\em locally free} $H$-presheaf can be standardized by choosing trivialization data.
\begin{lemma}\label{stdize}
Let $X$ be a scheme, $H$ a sieve on $X$ belonging to $\ZAR{X}$, and $E$ an $H$-presheaf.
Suppose there is an integer $n_f$ and an isomorphism $\varphi_f:\bOY[n_f]\rightarrow f^*E$ for each object $Y\xrightarrow f X$ of $\cC_H$. Then there exists an $H$-vector bundle $S_H^\varphi E$, and an isomorphism $\varphi_E:S_H^\varphi E \cong E$ induced by $\varphi$.
\end{lemma}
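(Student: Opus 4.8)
The plan is to construct $S_H^\varphi E$ directly as an $H$-presheaf by transporting the standard free module structure from the trivializations $\varphi_f$, and then to verify it satisfies Definition \ref{defnvb}. For each object $Y\xrightarrow f X$ of $\cC_H$, I would set $(S_H^\varphi E)(Y) = \OY(Y)^{n_f}$, the standard free module of the prescribed rank, and use $\varphi_f$ to identify this with $(f^*E)(Y) = E(Y)$ via the map on global sections. Concretely, $\varphi_f(Y):\OY(Y)^{n_f}\rightarrow E(Y)$ is an isomorphism of $\OY(Y)$-modules, and I would use it to \emph{define} the restriction maps: for a morphism $g:Z\rightarrow Y$ in $\cC_H$, the map $(S_H^\varphi E)(g)$ is the unique map making the square with $E(g)$ and the isomorphisms $\varphi_f(Y)$, $\varphi_g{}(Z)$ (or rather $\varphi_{fg}(Z)$) commute. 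In other words, $S_H^\varphi E$ is the presheaf obtained by conjugating $E$'s transition data through the chosen trivializations so that every value becomes literally a standard free module.

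The key steps, in order, are as follows. First I would check that $S_H^\varphi E$ is a well-defined functor $\cC_H^{op}\rightarrow \set$ (in fact an $\res\bOX H$-presheaf of modules): functoriality $(S_H^\varphi E)(gh) = (S_H^\varphi E)(h)(S_H^\varphi E)(g)$ follows from the functoriality of $E$ together with the fact that the conjugating isomorphisms cancel in the composite, so this reduces to a diagram chase using that each $\varphi_f$ is an isomorphism. Second, I would package the component isomorphisms into a map $\varphi_E:S_H^\varphi E\rightarrow E$ of $H$-presheaves, defined on each $Y$ by $\varphi_f(Y)$; naturality of $\varphi_E$ is exactly the commuting square used to define the restriction maps, so it holds by construction, and each component being an isomorphism makes $\varphi_E$ an isomorphism of $H$-presheaves. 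Third, I would verify the standard-vector-bundle condition: for each $Y\xrightarrow f X$ in $\cC_H$, the value $(S_H^\varphi E)(Y)=\OY(Y)^{n_f}$ is a standard free module by fiat, and I must show the induced map $\epsilon_f:\OY[n_f]\rightarrow f^*(S_H^\varphi E)$ from the standard basis is an isomorphism. Here I would invoke Lemma \ref{pbres} to identify $f^*(S_H^\varphi E)|_Z$ with restrictions, and use that the given $\varphi_f$ provides an isomorphism $\OY[n_f]\cong f^*E$ compatible with the standard basis on $Y$, so that $\epsilon_f$ matches $\varphi_f$ under the identification $f^*(S_H^\varphi E)\cong f^*E$ furnished by $\varphi_E$.

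The main obstacle I anticipate is keeping the two roles of the trivialization data straight at different objects of $\cC_H$. The isomorphism $\varphi_f$ is indexed by objects $f$, but the restriction maps of $S_H^\varphi E$ require comparing $\varphi_f$ at $Y$ with $\varphi_{fg}$ at $Z$, and there is no assumed compatibility (cocycle condition) between the $\varphi$'s for different objects. The resolution is that $S_H^\varphi E$ is \emph{defined} precisely to absorb this incompatibility: the transition map $(S_H^\varphi E)(g) = \varphi_{fg}(Z)^{-1}\circ E(g)\circ \varphi_f(Y)$ is forced, and functoriality then follows automatically because adjacent $\varphi$'s telescope. So the genuine content is verifying $\epsilon_f$ is an isomorphism, which is where the hypothesis that each $\varphi_f$ is an isomorphism is essential rather than cosmetic. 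Once $\varphi_E$ is established as an isomorphism of $H$-presheaves, applying $\xi_H$ (which is functorial by Lemma \ref{adjshvshff}) yields the desired isomorphism of associated sheaves, completing the construction; I would close by remarking that $\varphi_E$ being induced by $\varphi$ is exactly the statement recorded in the lemma.
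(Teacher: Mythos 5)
Your construction is exactly the paper's: define $(S_H^\varphi E)(Y)=\OY(Y)^{n_f}$ with restriction maps obtained by conjugating $E$'s restriction maps through the trivializations, read off $\varphi_E$ from the components $\varphi_f(Y)$, and conclude that $\epsilon_f$ is an isomorphism from the commuting triangle $f^*\varphi_E\circ\epsilon_f=\varphi_f$ (which holds because a map out of $\bOY[n_f]$ is determined by its effect on the standard basis in global sections over $Y$). The only deviations are cosmetic: the appeal to Lemma \ref{pbres} is unnecessary, and the closing application of $\xi_H$ is extra, since the lemma only asks for an isomorphism of $H$-presheaves.
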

\begin{proof}
For an object $Y\xrightarrow f X$ of $\cC_H$, define $S_H^\varphi E(Y)=\OY(Y)^{n_f}$.
For a morphism $g:Z\rightarrow Y$ of $\cC_H$, define $S_H^\varphi E(g)$ to be the composite map
\begin{multline*}
\OY(Y)^{n_f} \xrightarrow[\cong]{\varphi_f(Y)}(f^*E)(Y)=E(Y)\\
\xrightarrow {\mathmakebox[3em]{E(g)}} E(Z)=((fg)^*E)(Z) \xrightarrow[\cong] {\varphi_{fg}^{-1}(Z)} \OZ(Z)^{n_{fg}}.
\end{multline*}
If $h$ is another morphism of $\cC_H$, then $S_H^\varphi E(gh) = S_H^\varphi E(h)S_H^\varphi E(g)$.
Hence $S_H^\varphi E$ is an $H$-presheaf.
From the way $S_H^\varphi E$ is defined on morphisms, we see that
the map $\varphi_E:S_H^\varphi E\rightarrow E$ defined by $\varphi_E(Y):S_H^\varphi E(Y)\xrightarrow{\varphi_f(Y)} E(Y)$ on each object $Y\xrightarrow f X$ of $\cC_H$ is an isomorphism of $H$-presheaves.
Let $\epsilon:\bOY[n_f] \rightarrow f^*S_H^\varphi E$ be the map induced by the standard basis of $S_H^\varphi E(Y)$. Then the following diagram commutes
$$\xymatrix{ \bOY[n_f] \ar[r]^-\epsilon \ar[dr]_{\varphi_f} & f^*S_H^\varphi E \ar[d]^{f^*\varphi_E} \\
& f^*E}$$
because the diagram of global sections commute.
$$\xymatrix{ \OY(Y)^{n_f} \ar[r]^1 \ar[dr]_{\varphi_f(Y)} &
\OY(Y)^{n_f} \ar[d]^{\varphi_f(Y)} \\
& E(Y)}$$
Since $\varphi_f$ and $f^*\varphi_E$ are isomorphisms, so is $\epsilon$.
\end{proof}

Let $X$ be a scheme and $\cE$ a locally free sheaf of finite rank on $\zar{X}$. 
We can construct a standard vector bundle from $\cE$ once we make certain choices.
Suppose that $\cU=\{U_i\rightarrow U\}$ is a covering such that 
$\cE|_{U_i}$ is a free $\cO_{U_i}$-module for each $i$. 
Let $H$ be the sieve associated to $\cU$.
If $Y\xrightarrow f X$ is an object of $\cC_H$, 
then $f$ factors as $Y\rightarrow U_i\rightarrow X$ for some $i$. 
Hence $f^*\cE$ is a free $\OY$-module of finite rank, say $n_f$.
We choose an isomorphism $\alpha_f:\OY[n_f] \rightarrow f^*\cE$ 
for every object $Y\xrightarrow f X$ of $\cC_H$. 
Since $f^*H = H_Y = f^*H_X$, $f^*(B\cE|_H) = f^*B\cE$, and by Lemma \ref{Bf}, $f^*B\cE\cong Bf^*\cE$. Then define $\varphi_f$ to be the composite map
$$\bOY[n] \cong B\OY[n] \xrightarrow[\cong]{B\alpha_f} Bf^*\cE \xrightarrow[\cong]{} f^*(B\cE|_H).$$

\begin{coro}\label{stdvb}
Let $X$ be a scheme and $\cE$ a locally free sheaf of finite rank on $\zar{X}$.
If we choose $H$ and $\varphi$ as described in the previous paragraph,
then $S_H^\varphi B\cE|_H$ is a standard vector bundle.
Moreover, there is an isomorphism $\gamma_\cE:\xi_H S_H^\varphi B\cE|_H \rightarrow B\cE$ of sheaves on $\ZAR{X}$.
\end{coro}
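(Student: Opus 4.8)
The plan is to obtain the first assertion as a direct application of Lemma~\ref{stdize}, and then to produce $\gamma_\cE$ by sheafifying the comparison isomorphism that the same lemma supplies, composing it with two identifications of sheafifications coming from Proposition~\ref{shffres} and Lemma~\ref{unitmap}. The heart of the argument is purely formal; the only place that needs genuine care is the bookkeeping for the pullback functor.

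First I would observe that $\res{B\cE}H$ is an $H$-presheaf, being the restriction to $\cC_H$ of the sheaf $B\cE$ produced by Lemma~\ref{extres}. Thus Lemma~\ref{stdize} applies as soon as we exhibit, for every object $Y\xrightarrow f X$ of $\cC_H$, an isomorphism $\varphi_f:\bOY[n_f]\rightarrow f^*(\res{B\cE}H)$, and these are precisely the maps built in the paragraph preceding the corollary. The point requiring care is that the target is really $f^*(\res{B\cE}H)$: since $f$ is an object of $\cC_H$ we have $f^*H=H_Y$, and unwinding the definition of the pullback gives $f^*(\res{B\cE}H)(\smap Zg)=B\cE(\smap Z{fg})=(f^*B\cE)(\smap Zg)$, so that $f^*(\res{B\cE}H)=f^*B\cE$. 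Hence each $\varphi_f$ is the composite
$$\bOY[n_f]\cong B\OY[n_f]\xrightarrow[\cong]{B\alpha_f}Bf^*\cE\xrightarrow[\cong]{}f^*B\cE=f^*(\res{B\cE}H),$$
where the middle arrow is $B$ applied to the chosen trivialization $\alpha_f$ and the last isomorphism is that of Lemma~\ref{Bf}. Being a composite of isomorphisms, every $\varphi_f$ is an isomorphism, so Lemma~\ref{stdize} yields an $H$-vector bundle $S_H^\varphi\res{B\cE}H$; by Definition~\ref{defnvb} this is a standard vector bundle, which settles the first assertion.

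For the second assertion I would use the isomorphism $\varphi_E:S_H^\varphi\res{B\cE}H\cong\res{B\cE}H$ of $H$-presheaves also provided by Lemma~\ref{stdize}. Applying the sheafification functor $\xi_H$ gives an isomorphism $\xi_H\varphi_E:\xi_H S_H^\varphi\res{B\cE}H\xrightarrow{\cong}\xi_H(\res{B\cE}H)$ of sheaves on $\ZAR{X}$, so it remains to identify $\xi_H(\res{B\cE}H)$ with $B\cE$. Writing $H_X=\Hom_{\ZAR{X}}(-,X)$ for the full sieve on the final object $X$, Proposition~\ref{shffres} applied to the inclusion $H\subseteq H_X$ (with $H$ as the smaller sieve and $B\cE$ as the presheaf) gives a natural isomorphism $\xi_H(\res{B\cE}H)\cong\xi_{H_X}(B\cE)$; and since $B\cE$ is a sheaf, hence an $H_X$-sheaf, the unit map of Lemma~\ref{unitmap} taken for the sieve $H_X$ gives $B\cE\xrightarrow{\cong}\xi_{H_X}(B\cE)$. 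Composing these,
$$\gamma_\cE:\xi_H S_H^\varphi\res{B\cE}H\xrightarrow{\xi_H\varphi_E}\xi_H(\res{B\cE}H)\xrightarrow{\cong}\xi_{H_X}(B\cE)\xrightarrow{\cong}B\cE$$
is the required isomorphism of sheaves on $\ZAR{X}$.

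The main obstacle, such as it is, lies entirely in the pullback bookkeeping of the second paragraph: checking the equality $f^*(\res{B\cE}H)=f^*B\cE$ and verifying that the chain defining $\varphi_f$ lands in the correct functor. Once that is in place, the rest is a formal concatenation of previously established isomorphisms. Since the corollary asserts only the existence of $\gamma_\cE$ and claims no naturality, merely assembling the three isomorphisms suffices.
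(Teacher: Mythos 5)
Your proposal is correct and follows essentially the same route as the paper: apply Lemma~\ref{stdize} to $E=\res{B\cE}H$ with the $\varphi_f$ from the preceding paragraph, then define $\gamma_\cE$ as the composite of $\xi_H\varphi_E$, the isomorphism $\xi_H(\res{B\cE}H)\cong\xi_{H_X}B\cE$ from Proposition~\ref{shffres}, and the identification $\xi_{H_X}B\cE\cong B\cE$ coming from $B\cE$ being a sheaf. Your appeal to Lemma~\ref{unitmap} for the last identification (where the paper cites Lemma~\ref{extres}(1)) and your explicit check that $f^*(\res{B\cE}H)=f^*B\cE$ are just slightly more detailed justifications of the same steps.
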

\begin{proof}
Applying Lemma \ref{stdize} to $E=B\cE|_H$, we get a standard vector bundle $S_H^\varphi B\cE|_H$,
and an isomorphism $\varphi_{B\cE|_H}:S_H^\varphi B\cE|_H \rightarrow B\cE|_H$.
The isomorphism $\gamma_\cE$ is the composite map
$$\xi_HS_H^\varphi B\cE|_H\xrightarrow[\cong] {\xi_H\varphi}\xi_H(B\cE|_H)\xrightarrow \cong\xi_{H_X}B\cE \xrightarrow \cong B\cE$$
where the second and the third isomorphisms are from Proposition \ref{shffres} and the fact that $B\cE$ is a sheaf (Lemma \ref{extres}(1)).
\end{proof}

Now we want to define a pullback functor $\bva(X)\rightarrow \bva(Y)$ induced by a map $f:Y\rightarrow X$ of schemes.
If $E$ is an $H$-vector bundle, then the $f^*H$-presheaf $f^*E$ is an $f^*H$-vector bundle.
To prove this, suppose $Z\xrightarrow g Y$ is an object of $f^*H$. We need to prove that $f^*E(\smap Zg)$ is a standard free module and that the map $\bOZ^n\rightarrow g^*f^*E$ induced by the standard basis of $f^*E(\smap Zg)$ is an isomorphism.
But those follow from the condition of $E$ being an $H$-vector bundle since $Z\xrightarrow g Y \xrightarrow f X$ is an object of $H$, $f^*E(\smap Zg) = E(\smap Z{fg})$, and $g^*f^*E=(fg)^*E$ by Lemma \ref{strpb}. Therefore we can define the pullback of $(H,E)$ to be $(f^*H,f^*E)$.
Suppose $\alpha:(H,E)\rightarrow (K,F)$ is a morphism of standard vector bundles in $\bva(X)$, that is, a morphism $\alpha:\xi_HE\rightarrow \xi_KF$ of sheaves.
Then $f^*\alpha$ is a morphism $f^*\xi_HE\rightarrow f^*\xi_KF$, which is a morphism
$\xi_{f^*H}f^*E \rightarrow \xi_{f^*K}f^*F$ by Lemma \ref{xipb}. So it is a map $(f^*H,f^*E)\rightarrow (f^*K,f^*F)$ of standard vector bundles. The pullback of the map $\alpha$ of standard vector bundles is defined to be $f^*\alpha$. If $\beta:(K,F)\rightarrow (L,G)$ is another map of standard vector bundles, then $f^*(\beta\alpha) = f^*\beta f^*\alpha$ since $f^*$ is a functor. Therefore, we have defined a functor $\bva(X)\rightarrow \bva(Y)$. We will denote it by $f^*$ as well.
\begin{prop}\label{functvb}
Suppose $f:Y\rightarrow X$ and $g:Z\rightarrow Y$ are maps of schemes. Then $(fg)^*=f^*g^*$ as functors $\bva(Z)\rightarrow \bva(X)$. (This is a strict equality, not a natural isomorphism.)
\end{prop}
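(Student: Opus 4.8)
The plan is to deduce the claim directly from the two strictness results already established at the presheaf and sheafification levels, Lemma~\ref{strpb} and Lemma~\ref{xipb}. Recall that $f^*$ and $g^*$ are the pullback functors induced by $f\colon Y\to X$ and $g\colon Z\to Y$, so that the composite to be compared with $(fg)^*$ is the functor sending a standard vector bundle on $X$ first through $f^*$ and then through $g^*$. Every identification used below is a strict equality rather than a natural isomorphism, so there is no coherence data to track; the only real work is to confirm that sources and targets coincide on the nose.

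First I would check agreement on objects. For a standard vector bundle $(H,E)$ on $X$, the definition of pullback gives $(fg)^*(H,E)=\big((fg)^*H,(fg)^*E\big)$ and $g^*f^*(H,E)=\big(g^*f^*H,g^*f^*E\big)$. Lemma~\ref{strpb} asserts precisely that $(fg)^*H=g^*f^*H$ as sieves and $(fg)^*E=g^*f^*E$ as presheaves, both strictly, so the two pulled-back objects are literally equal.

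For morphisms I would use that a map $\alpha\colon(H,E)\to(K,F)$ is, by Definition~\ref{defnvb}, a map $\alpha\colon\xi_HE\to\xi_KF$ of sheaves on the full site $\ZAR{X}$, and that its pullback is the sheaf-level pullback of $\alpha$ read through Lemma~\ref{xipb}. Applying Lemma~\ref{strpb} with the full sieve $H_X$ shows that the pullback functors on $Pre(\ZAR{X})$ already satisfy $(fg)^*=g^*f^*$ strictly; restricting to sheaves and evaluating at $\alpha$ gives $(fg)^*\alpha=g^*f^*\alpha$ as maps of sheaves. It then remains to see that these maps are the same morphism of standard vector bundles, i.e.\ that their reinterpreted sources and targets match: Lemma~\ref{xipb} applied to $fg$ gives $(fg)^*\xi_HE=\xi_{(fg)^*H}(fg)^*E$, while applying it successively to $f$ and to $g$ gives $g^*f^*\xi_HE=\xi_{g^*f^*H}g^*f^*E$, and these two sheaves coincide by the object-level equalities of Lemma~\ref{strpb}; the same applies to $(K,F)$. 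Strict functoriality on composable morphisms, $(fg)^*(\beta\alpha)=g^*f^*(\beta\alpha)$, is then immediate from functoriality of the sheaf pullback.

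The step I expect to require the most care is not any computation but the bookkeeping just described: one must verify that the identification produced by Lemma~\ref{xipb} applied once to $fg$ is literally the identification obtained by applying it twice, to $f$ and then $g$, rather than merely a canonical isomorphism between them. This is exactly the point at which Lemma~\ref{strpb} is indispensable, since it supplies the strict equalities of sieves and presheaves that force the two sheafifications to be the same object.
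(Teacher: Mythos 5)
Your proposal is correct and follows essentially the same route as the paper, whose proof simply cites the definition of the pullback functors together with Lemma~\ref{strpb} (with Lemma~\ref{xipb} already built into the definition of pullback on morphisms); you have merely written out explicitly the object-level and morphism-level bookkeeping that the paper leaves implicit.
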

\begin{proof}
This follows from the definition of the pullback functors and Lemma \ref{strpb}.
\end{proof}

\begin{lemma}\label{Bsmall}
Suppose $(H,E)$ is a standard vector bundle on $X$. If $f:Y\rightarrow X$ is a map of schemes,
then the induced map $\lambda:f^*(\xi_HE|_X) \rightarrow \xi_HE|_Y$ is an isomorphism.
\end{lemma}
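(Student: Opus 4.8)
The plan is to verify that $\lambda$ is an isomorphism locally on $Y$ and to match each local piece with the extension $B$ of a free sheaf, where Lemma \ref{extres}(2) supplies the conclusion. Write $G=\xi_H E$, a sheaf on $\ZAR{X}$. Since $H$ belongs to $\ZAR{X}$, there is a Zariski covering $\{U_i\to X\}$ with each $U_i\in\cC_H$; every $\iota_i\colon U_i\hookrightarrow X$ is an open immersion, so the base changes $V_i:=U_i\times_X Y$ form an open cover of $Y$, and $f$ restricted to the open $V_i$ factors as $V_i\xrightarrow{h_i}U_i\xrightarrow{\iota_i}X$. Because a map of sheaves on $\zar{Y}$ is an isomorphism once it is so on each member of an open cover, it suffices to show that $\lambda|_{V_i}$ is an isomorphism for every $i$.

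The crucial point is a compatibility of $\lambda$ with localization. Using $f|_{V_i}=\iota_i h_i$, the definitions of the restriction functors, and Lemma \ref{pbres}, one has canonical identifications $f^*(G|_X)|_{V_i}=h_i^*\big((\iota_i^*G)|_{U_i}\big)$ and $(G|_Y)|_{V_i}=(\iota_i^*G)|_{V_i}$. Under these identifications I would show that $\lambda|_{V_i}$ coincides with the map $\lambda_{\iota_i^*G,\,h_i}$ attached to the sheaf $\iota_i^*G$ on $\ZAR{U_i}$ and the morphism $h_i\colon V_i\to U_i$ over $U_i$. This is proved by unwinding the adjoint construction of $\lambda$ from the map $\rho$ on global sections: both $\rho$ and $\lambda$ are built only from the values of $G$ on schemes lying over $U_i$, so passing from the base $X$ to the open $U_i$ and from $Y$ to the open $V_i$ does not alter the construction. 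Equivalently, one uses that $\lambda$ for an open immersion is the identity together with the compatibility of $\lambda$ with composition exhibited in the proof of Lemma \ref{extres}. Thus it remains to prove that each $\lambda_{\iota_i^*G,\,h_i}$ is an isomorphism.

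Next I would identify $\iota_i^*G$. Since $U_i\in\cC_H$ we have $\iota_i^*H=H_{U_i}$, so Lemma \ref{xipb} gives $\iota_i^*G=\iota_i^*\xi_H E=\xi_{H_{U_i}}(\iota_i^*E)$. The defining isomorphism $\epsilon_{\iota_i}\colon\bO_{U_i}^{\,n_i}\to\iota_i^*E$ of Definition \ref{defnvb}, together with the canonical identification $\bO_{U_i}^{\,n_i}=B(\cO_{U_i}^{n_i})$ coming from Example \ref{exvb}, yields $\iota_i^*E\cong B(\cO_{U_i}^{n_i})$ as sheaves on $\ZAR{U_i}$. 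As $B(\cO_{U_i}^{n_i})$ is already a sheaf by Lemma \ref{extres}(1), its sheafification is isomorphic to itself by Lemma \ref{unitmap}, whence $\iota_i^*G\cong B(\cO_{U_i}^{n_i})$.

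Finally, Lemma \ref{extres}(2), applied with base $U_i$, free sheaf $\cO_{U_i}^{n_i}$, and the morphism $h_i$ over $U_i$, states exactly that $\lambda_{B(\cO_{U_i}^{n_i}),\,h_i}$ is an isomorphism; by the naturality of $\lambda$ in the sheaf variable, the isomorphism $\iota_i^*G\cong B(\cO_{U_i}^{n_i})$ transports this to $\lambda_{\iota_i^*G,\,h_i}$, which is therefore an isomorphism. Combined with the local criterion of the first paragraph, this shows that $\lambda$ is an isomorphism. The main obstacle is the compatibility asserted in the second paragraph: matching $\lambda|_{V_i}$ with $\lambda_{\iota_i^*G,\,h_i}$ requires carefully tracing the adjoint construction of $\lambda$ through restriction to opens and localization of the base, and this is the one genuinely delicate bookkeeping step in the argument.
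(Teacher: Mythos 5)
Your proposal is correct and is essentially the paper's own argument: both proofs localize $Y$ over the trivializing cover contained in $H$ and then use the trivialization of Definition \ref{defnvb} together with naturality of $\lambda$ in the sheaf variable to reduce to a case where $\lambda$ is already known to be invertible. The only cosmetic differences are that the paper localizes stalkwise and checks the trivial case by a direct diagram for $\bO_U^{\,n}$ (using $\lambda_{\bO}$), whereas you restrict to the cover $V_i=U_i\times_X Y$ and quote Lemma \ref{extres}(2) through the identification $\iota_i^*(\xi_H E)\cong B(\cO_{U_i}^{n_i})$; the restriction-compatibility of $\lambda$ that you flag as the delicate bookkeeping step is asserted with the same brevity in the paper's proof.
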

\begin{proof}
We prove this by showing that the map at every stalk is an isomorphism. Suppose $y\in Y$ and $x = f(y)$.
We can choose an open subscheme $U\subset X$ containing $x$ such that the inclusion $i:U\rightarrow X$ is in the sieve $H$.
Let $V$ be an open subscheme of $Y$ containing $f^{-1}(U)$, and $j:V\rightarrow Y$ the inclusion, and $g=f|_V :V\rightarrow U$.
$$\xymatrix{
V \ar[r]^j \ar[d]_g &Y\ar[d]^f \\
U \ar[r]_i &X }$$
Since $E$ is an $H$-vector bundle, there is an isomorphism $\epsilon:\bOU[n] \rightarrow i^*E$ induced by the standard basis of $E(U)$. Since $i^*E|_U = E|_U$ and $i^*E|_V = E|_V$ by Lemma \ref{pbres}, 
we obtain the following commutative diagram, which shows that the induced map $\lambda_E:g^*E|_U\rightarrow E|_V$ is an isomorphism.
$$\xymatrix{
\OV[n] \ar[r]^-\cong \ar[dr]_1&
g^*\OU[n] \ar[r]^{g^*\epsilon|_U}_\cong \ar[d]^{\lambda_{\bO}}&
g^*E|_U \ar[d]^{\lambda_E}\\
& \OV[n] \ar[r]_{\epsilon|_V}^\cong &
E|_V
}$$
Now $j^*f^*(\xi_HE|_X) \cong g^*i^*(\xi_HE|_X) \cong g^*(\xi_HE|_U) \cong g^*E|_U$ and $j^*(\xi_HE|_Y) \cong \xi_HE|_V\cong E|_V$. Thus we have the following commutative diagram, which shows that $\lambda|_V$ is an isomorphism.
$$\xymatrix{f^*(\xi_HE|_X)|_V \ar[r]^-{\lambda|_V} \ar[d]_\cong&
\xi_HE|_V \ar[d]^\cong \\
g^*E|_U \ar[r]_{\lambda_E}^\cong & E|_V
}$$
Therefore, the localized map $\lambda_y$ is an isomorphism as it is the localization of the top row at $y$.
\end{proof}

\subsection{Direct sum and tensor product}
We will define two bifunctors $\oplus: \bva(X)\times\bva(X)\rightarrow \bva(X)$ and $\otimes:\bva(X)\times\bva(X)\rightarrow \bva(X)$ called direct sum and tensor product of standard vector bundles.
First we define presheaf versions.
$$\begin{array}{c}
\noplus:Pre_H(\ZAR{X})\times Pre_H(\ZAR{X})\rightarrow Pre_H(\ZAR{X})\\
\notimes:Pre_H(\ZAR{X})\times Pre_H(\ZAR{X})\rightarrow Pre_H(\ZAR{X})
\end{array}$$
If $E$ and $F$ are $H$-presheaves, then define $(E\noplus F)(Y)=E(Y)\oplus F(Y)$ for each object $Y$, and $(E\noplus F)(g) = E(g)\oplus F(g)$ for each morphism $g$.
If $\gamma:E\rightarrow E'$ and $\delta:F\rightarrow F'$ are maps of presheaves, then
$$(\gamma\noplus \delta)(Y)=\gamma(Y)\oplus \delta(Y):E(Y)\oplus F(Y)\rightarrow E'(Y)\oplus F'(Y).$$
Similarly, define $(E\notimes F)(Y) = E(Y)\otimes_{\OY(Y)}F(Y)$ on objects,  $(E\notimes F)(g) = E(g)\otimes F(g)$ on morphisms, and
$$(\gamma\notimes\delta)(Y) = \gamma(Y)\otimes \delta(Y):E(Y)\otimes_{\OY(Y)} F(Y)\rightarrow E'(Y)\otimes_{\OY(Y)} F'(Y).$$

Let $H$ and $K$ be sieves on $X$ that belong to $\ZAR{X}$, and let $E$ be an $H$-vector bundle and $F$ a $K$-vector bundle.
We will define their direct sum $E\oplus F$ as an $H\cap K$-vector bundle.
The presheaf direct sum $E\noplus F$ is not a standard vector bundle since the value at an object is not a standard free module.
But we can make it into one through a standardization process (Lemma \ref{stdize}).
For each object $Y\xrightarrow f X$ of $\cC_{H\cap K}$,
we have isomorphisms $\alpha:\bOY[r]\rightarrow f^*E$ and $\beta:\bOY[s]\rightarrow f^*F$
induced by the standard bases of $E(Y)$ and $F(Y)$.
Let $\varphi_f$ be the composite map
$$\varphi_f: \bOY[r+s]\xrightarrow[\cong]\sigma \bOY[r]\noplus \bOY[s] \xrightarrow[\cong]{\alpha\noplus \beta} f^*E\noplus f^*F=f^*(E\noplus F)$$
where $\sigma$ is the isomorphism
\begin{equation}\label{sigma}
(a_1,\ldots,a_r,a_{r+1},\ldots,a_{r+s})\mapsto((a_1,\ldots,a_r),(a_{r+1},\ldots,a_{r+s})).
\end{equation}
Then define $E\oplus F = S_{H\cap K}^\varphi (E\noplus F)$.
Since $E\oplus F\cong E\noplus F$, there is an isomorphism
\begin{equation*}
\omega:\xi_{H\cap K}(E\oplus F)\cong \xi_{H\cap K}(E\noplus F)\cong \xi_H E\noplus \xi_K F.
\end{equation*}
If $\gamma:(H,E)\rightarrow (H'E')$ and $\delta:(K,F)\rightarrow (K',F')$ are maps of standard vector bundles, that is,
maps $\gamma:\xi_HE\rightarrow \xi_{H'}E'$ and $\delta:\xi_KF\rightarrow \xi_{K'}F'$ of associated sheaves, then $\gamma\oplus \delta$ is defined to be the following composite map.
$$\xi_{H\cap K}(E\oplus F) \xrightarrow\omega \xi_HE \noplus \xi_K F\xrightarrow {\gamma\noplus \delta} \xi_{H'}E'\noplus \xi_{K'}F' \xrightarrow {\omega^{-1}} \xi_{H'\cap K'}(E'\oplus F')$$
If $\gamma':(H',E')\rightarrow (H',E'')$ and $\delta':(K',F')\rightarrow (K'',F'')$ are another pair of maps of standard vector bundles, then $(\gamma'\oplus\delta')(\gamma\oplus\delta) = \gamma'\gamma \oplus \delta'\delta$ since a similar formula for $\noplus$ holds.
Therefore, we have defined a bifunctor $\oplus:\bva(X)\times\bva(X)\rightarrow \bva(X)$.
The isomorphism $\omega$ also allows us to define projections and injections of standard vector bundles
\begin{align*}
p_E&:E\oplus F \rightarrow E & \quad i_E&:E\rightarrow E\oplus F \\
p_F&:E\oplus F \rightarrow F & \quad i_F&:F\rightarrow E\oplus F
\end{align*}
such that $i_Ep_E+i_Fp_F=1_{E\oplus F}$, $p_Ei_E=1_E$, $p_Fi_F=1_F$, $p_Ei_F=0$, and $p_Fi_E=0$.
So the direct sum operation $\oplus$ is a biproduct operation in $\bva(X)$.
This construction can be generalized to the direct sum of multiple terms.
The category $\bva(X)$ is an additive category with $\oplus$ as the biproduct operation.

The construction of the tensor product $\otimes:\bva(X)\times\bva(X)\rightarrow \bva(X)$ is similar.
If $E$ is an $H$-vector bundle and $F$ is a $K$-vector bundle, then $E\otimes F$ will be an $H\cap K$-vector bundle.
For each object $Y\xrightarrow f X$ of $\cC_{H\cap K}$, let $\alpha:\OY[r] \rightarrow f^*E$ and $\beta:\OY[s]\rightarrow f^*F$ be the isomorphisms induced by the standard bases of $E(Y)$ and $F(Y)$.
Define $\varphi_f$ to be the composite map
$$\varphi_f : \bOY[rs] \xrightarrow[\cong]{\pi^{-1}} \bOY[r]\notimes\bOY[s] \xrightarrow[\cong]{\alpha\notimes\beta}
f^*E\notimes f^*F = f^*(E\notimes F)$$
where $\pi$ is the isomorphism
\begin{equation}\label{pi}
(a_1,\ldots,a_r)\otimes(b_1,\ldots,b_s)\mapsto (a_1b_1,\ldots,a_1b_s,\ldots,a_rb_1,\ldots,a_rb_s).
\end{equation}
Using Lemma \ref{stdize} with this collection of isomorphisms, define the tensor product of $E$ and $F$ to be $E\otimes F = S_{H\cap K}^\varphi(E\notimes F)$. Suppose $\gamma:(H,E)\rightarrow (H'E')$ and $\delta:(K,F)\rightarrow (K',F')$ are maps of standard vector bundles. They are the maps $\gamma:\xi_HE\rightarrow \xi_{H'}E'$ and $\delta:\xi_KF\rightarrow \xi_{K'}F'$ of the associated sheaves.
Since $E$ is an $H$-sheaf, there is a natural isomorphism $E\cong (\xi_HE)|_H$ by Lemma \ref{unitmap}. There are similar isomorphisms for other standard vector bundles as well. Then $E\notimes F \cong (\xi_HE)|_H\notimes(\xi_KF)|_K =\xi_HE|_{H\cap K}\notimes \xi_HF|_{H\cap K} = (\xi_HE\notimes\xi_KF)|_{H\cap K}$, and by Proposition \ref{shffres}, there is an isomorphism $\zeta$ defined by composing a series of isomorphisms.
\begin{multline}\label{shftp}
\zeta:\xi_{H\cap K}(E\otimes F)\cong\xi_{H\cap K}(E\notimes F)\\
\cong \xi_{H\cap K}((\xi_HE\notimes\xi_KF)|_{H\cap K}) \cong \xi_{H_X}(\xi_HE\notimes\xi_KF).
\end{multline}
Now the map $\gamma\otimes\delta$ is defined to be the composite map
\begin{multline*}
\gamma\otimes\delta: \xi_{H\cap K}(E\otimes F) \xrightarrow{\mathmakebox[3em]{\zeta}} \xi_{H_X}(\xi_HE\notimes \xi_KF) \\
\xrightarrow{\mathmakebox[3em]{\xi(\gamma\notimes\delta)}} \xi_{H_X}(\xi_{H'}E'\notimes \xi_{K'}F')
\xrightarrow{\mathmakebox[3em]{\zeta^{-1}}}\xi_{H'\cap K'}(E'\otimes F')
\end{multline*}
If $\gamma':(H',E')\rightarrow (H',E'')$ and $\delta':(K',F')\rightarrow (K'',F'')$ are another pair of maps of standard vector bundles, then $(\gamma'\otimes\delta')(\gamma\otimes\delta) = \gamma'\gamma \otimes \delta'\delta$ since a similar formula for $\notimes$ holds.
Thus, we have defined a bifunctor $\otimes:\bva(X)\times\bva(X)\rightarrow \bva(X)$.

\begin{thm}\label{ds}
Let $X$ be a scheme and $\bva(X)$ the category of standard vector bundles on $X$.
\begin{enumerate}
\item The direct sum $\oplus:\bva(X)\times\bva(X)\rightarrow \bva(X)$ is strictly associative. In other words, the following diagram commutes (strictly, not up to a natural isomorphism).
$$\xymatrix{\bva(X)\times \bva(X)\times\bva(X) \ar[rr]^-{\oplus\times 1} \ar[d]_{1\times\oplus}&&
\bva(X)\times\bva(X) \ar[d]^\oplus \\
\bva(X)\times\bva(X) \ar[rr]_-\oplus && \bva(X)}$$
\item The zero bundle 0 is the strict identity with respect to $\oplus$, In other words,
 for any $E\in\bva(X)$, $0\oplus E = E\oplus 0 = E$ (identities, not natural isomorphisms), and if $\gamma:E\rightarrow F$ is a map of standard vector bundles, then $1_0\oplus \gamma = \gamma\oplus 1_0 = \gamma$.
\item If $f:Y\rightarrow X$ is a map of schemes, then $f^*$ preserves $\oplus$ and the identity object. In other words,
$f^*0= 0$, and the following diagram commutes
$$\xymatrix{
\bva(X)\times\bva(X) \ar[r]^-\oplus \ar[d]_{(f^*,f^*)} &
\bva(X) \ar[d]^{f^*} \\
\bva(Y)\times\bva(Y) \ar[r]_-\oplus & \bva(Y)
}$$
\end{enumerate}
\end{thm}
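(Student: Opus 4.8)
The plan is to reduce all three statements to strict equalities of presheaves on an appropriate sieve, exploiting three facts that are already strict on the nose: intersection of sieves is strictly associative with $H_X$ as a strict unit (it is just set-theoretic intersection of subfunctors), integer addition of ranks is strictly associative with $0$ as a strict unit, and the grouping isomorphism $\sigma$ of (\ref{sigma}) turns the standardization $S_H^\varphi$ of Lemma \ref{stdize} into a purely combinatorial reindexing. The strategy is to match the two sides first as objects of $\bva(X)$ (i.e.\ as pairs of a sieve and an $H$-presheaf) and then as maps of sheaves.

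For part (1) I would first record that $(H\cap K)\cap L=H\cap(K\cap L)$, so both $(E\oplus F)\oplus G$ and $E\oplus(F\oplus G)$ are $H\cap K\cap L$-presheaves. Then I would compare them object by object: for $Y\xrightarrow f X$ in $\cC_{H\cap K\cap L}$ of ranks $r,s,t$, each side has value $\OY(Y)^{r+s+t}$ by associativity of addition, and on a morphism $g$ the standardized transition map is, by the explicit form of $\sigma$, the block-diagonal matrix $\mathrm{diag}(M_E(g),M_F(g),M_G(g))$. Since nesting of block-diagonal matrices is strictly associative, the two $H\cap K\cap L$-presheaves coincide, so the two objects of $\bva(X)$ are literally equal. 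This settles the object part of the functor identity.

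The morphism part of (1) is the step I expect to be the main obstacle, because a morphism of standard vector bundles is by definition a map of the sheafifications, and $\gamma\oplus\delta$ is built through the structural isomorphism $\omega$, whereas the sheaf-level $\noplus$ is associative only up to the canonical, non-strict isomorphism of module direct sums. To avoid wrestling with $\omega$ directly, I would use that the source and target objects already agree and then check that $(\gamma\oplus\delta)\oplus\epsilon$ and $\gamma\oplus(\delta\oplus\epsilon)$ agree after restriction to a common sieve refining all the sieves involved. On such a sieve Lemma \ref{unitmap} identifies each sheafification with its defining standardized presheaf, so both maps become the same concrete presheaf map assembled componentwise from $\gamma,\delta,\epsilon$ via $\sigma$, where associativity is once more the strict associativity of block-matrix maps. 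As a map of sheaves is determined by its restriction to a sieve belonging to the site, the two composites are equal.

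For part (2) I would rerun the object computation with the zero bundle: $H_X\cap H=H$, the value is $\OY(Y)^{0+r}=\OY(Y)^r$, and because $\sigma$ with an empty summand is the identity reindexing, the transition matrix is exactly $M_E(g)$; hence $0\oplus E=E$ and symmetrically $E\oplus 0=E$ as presheaves, so as objects, and $1_0\oplus\gamma=\gamma$ follows by the restriction argument of part (1) with the empty block contributing nothing. For part (3) I would first note $f^*(H\cap K)=f^*H\cap f^*K$ and $f^*0=0$ straight from the definition of $f^*$ on sieves and presheaves. Since $f^*$ acts by precomposition with $f$ (Lemma \ref{strpb}) and the standardization datum $\sigma$ depends only on the ranks, which are unchanged, $f^*(E\oplus F)$ and $f^*E\oplus f^*F$ have equal values and equal transition matrices on $\cC_{f^*(H\cap K)}$, giving equality of objects; on morphisms I would invoke Lemma \ref{xipb} to commute $f^*$ past the sheafifications and $\omega$, checking $f^*(\gamma\oplus\delta)=f^*\gamma\oplus f^*\delta$ after the same restriction to a common sieve where both sides are the identical componentwise block map.
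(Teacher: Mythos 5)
Your proposal is correct, and its object-level half (equal sieves via associativity of intersection, equal ranks, equal block-diagonal transition matrices coming from $\sigma$) is exactly the paper's argument. Where you genuinely diverge is the morphism-level step. The paper proves $(\gamma\oplus\delta)\oplus\varepsilon=\gamma\oplus(\delta\oplus\varepsilon)$ by a commuting cube of sheaf maps: the top and bottom faces are the definition of $\oplus$ on morphisms (conjugation by $\omega$), the front face is the naturality of the associativity isomorphism $\alpha$ of $\noplus$, and the left and right faces express that the structural isomorphisms derived from $\sigma$ are compatible with $\alpha$; commutativity of the back face then follows formally because the slanted maps are isomorphisms. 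You instead reduce equality of the two sheaf maps to equality of their restrictions to a common sieve $M$ contained in all six sieves involved, identify sections over $\cC_M$ with the standardized presheaves via Lemma \ref{unitmap}, and conclude from the strict associativity of splitting concatenated tuples. This works, but it rests on a fact the paper never states and which you should prove: two maps of sheaves on $\ZAR{X}$ that agree after restriction to $\cC_M$, for $M$ a sieve belonging to the site, are equal. (This is easy: every object $U$ of $\ZAR{X}$ admits a covering by objects of $\cC_M$, obtained by pulling back a covering of $X$ contained in $M$ and using Proposition \ref{svcatprop}, and the target sheaf is separated.) You should also be aware that unwinding $\omega$ over objects of $\cC_M$ requires the unit maps of Lemma \ref{unitmap} to be compatible with the isomorphisms of Proposition \ref{shffres}; that verification is straightforward from the explicit constructions, but it is precisely where the content of the paper's left and right cube faces reappears in your version, so you have relocated rather than eliminated that check. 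What your route buys is a uniform ``check it on sections over a small enough sieve'' principle that disposes of parts (2) and (3) with no further diagrams (your use of Lemma \ref{strpb} and Lemma \ref{xipb} in part (3) is exactly what justifies the paper's terser statement that $f^*E$ and $f^*\gamma$ agree with $E$ and $\gamma$ wherever they are defined); what the paper's route buys is that it never needs the faithfulness lemma, staying entirely at the level of sheaf diagrams.
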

\begin{proof}
These statements are about the commutativity of various diagrams of functors.
Two composite functors are the same when they agree on objects and on morphisms.
First, the equality of objects, i.e., standard vector bundles, (which are functors,)  is shown by proving that they have equal modules of sections and equal restriction maps.
Since the modules of sections of standard vector bundles are standard free modules, two of them are the same if and only if they have the same rank. It can be verified easily.
So we only need to see if they have the same restriction maps.
Suppose $(H,E)$ is a standard vector bundle, and $g:Z\rightarrow Y$ is a morphism of $\cC_H$.
Since $E(Y)$ and $E(Z)$ are standard free modules,
the map $E(g):E(Y)\rightarrow E(Z)$ is represented by a matrix (with respect to the standard bases).
Suppose $K$ is the sieve associated to $F$, and $g$ is in $\cC_{H\cap K}$.
Then $(E\oplus F)(g)$ is represented by the block matrix
$$\quadmat {E(g)}00{F(g)}$$
because the standard basis of $(E\oplus F)(W)$ corresponds to those of $E(W)$ and $F(W)$ via the isomorphism
$$\sigma:(a_1,\ldots,a_r,a_{r+1},\ldots,a_{r+s})\mapsto ((a_1,\ldots,a_r),(a_{r+1},\ldots,a_{r+s}))$$
for all relevant objects $W$. 
Thus the commutativity of the diagrams on objects follows.

For the commutativity of the first diagram on morphisms, suppose $\gamma:(H,E)\rightarrow (H'E')$, $\delta:(K,F)\rightarrow (K',F')$, and $\varepsilon:(L,G)\rightarrow (L',G')$ are morphisms of standard vector bundles. We need to show $(\gamma\oplus\delta)\oplus\varepsilon = \gamma\oplus(\delta\oplus\varepsilon)$. It suffices to show the commutativity of the following diagram as then the back square shows the equality.
$$\xymatrix@C=-5pt{
\xi((E\oplus F)\oplus G) \ar[rr]^{(\gamma\oplus\delta)\oplus\varepsilon} \ar[dr]_\cong \ar[ddd]_1 
&&
\xi((E'\oplus F')\oplus G') \ar[dr]^\cong \ar'[d][ddd]_1\\
& (\xi E\noplus \xi F)\noplus \xi G \ar[rr]^(.5){(\gamma\noplus\delta)\noplus\varepsilon} \ar[ddd]_(.4)\alpha 
&&
(\xi E'\noplus \xi F')\noplus \xi G' \ar[ddd]^\alpha \\\\
\xi(E\oplus (F\oplus G)) \ar'[r][rr]^(.25){\gamma\oplus(\delta\oplus\varepsilon)} \ar[dr]_\cong 
&&
\xi(E'\oplus (F'\oplus G')) \ar[dr]^\cong \\
& \xi E\noplus (\xi F \noplus \xi G) \ar[rr]_{\gamma\noplus(\delta\noplus\varepsilon)}
&&
\xi E'\noplus(\xi F'\noplus \xi G')
}$$
In the diagram, the isomorphism $\alpha$ is the associativity isomorphism
\begin{align*}
(((a_1,\ldots,a_r),&\,(b_1,\ldots,b_s)),(c_1,\ldots,c_t))\\
&\longmapsto ((a_1,\ldots,a_r),((b_1,\ldots,b_s),(c_1,\ldots,c_t)))
\end{align*}
so the front square commutes. The slanted arrows are derived from the isomorphism $\sigma$ defined by (\ref{sigma}). Therefore, the left and right squares commute. The top and bottom squares commute by definition. Therefore, the whole diagram commutes.
The property (2) of the theorem is proved similarly. The property (3) follows directly from the definition of $f^*$ since $f^*E$ and $f^*\gamma$ are the same as $E$ and $\gamma$ everywhere they are defined for any standard vector bundle $E$ and any map $\gamma$ of standard vector bundles.
\end{proof}

\begin{thm}\label{tp}
Let $X$ be a scheme and $\bva(X)$ the category of standard vector bundles on $X$.
\begin{enumerate}
\item \label{satp} The tensor product $\otimes:\bva(X)\times\bva(X)\rightarrow \bva(X)$ is strictly associative. In other words, the following diagram commutes (strictly, not up to a natural isomorphism).
$$\xymatrix{\bva(X)\times \bva(X)\times\bva(X) \ar[rr]^-{\otimes\times 1} \ar[d]_{1\times\otimes}&&
\bva(X)\times\bva(X) \ar[d]^\otimes \\
\bva(X)\times\bva(X) \ar[rr]_-\otimes && \bva(X)}$$
\item The trivial standard line bundle $\bOX$ is the strict identity with respect to $\otimes$, In other words,
 for any $E\in\bva(X)$, $\bOX\otimes E = E\otimes \bOX = E$ (identities, not natural isomorphisms), and if $\gamma:E\rightarrow F$ is a map of standard vector bundles, then $1_{\bOX}\otimes \gamma = \gamma\otimes 1_{\bOX} = \gamma$.
\item \label{commlb} Let $\lbdl(X)$ be the category of standard line bundles on $X$, a full subcategory of $\bva(X)$. Then $\lbdl(X)$ is a strict center in the sense that
$E\otimes L = L \otimes E$ (identity, not natural isomorphism) for all $E\in\bva(X)$ and $L\in\lbdl(X)$, and $\gamma\otimes\beta=\beta\otimes\gamma$ for all morphisms $\gamma$ of $\bva(X)$ and $\beta$ of $\lbdl(X).$
\item \label{ftp} If $f:Y\rightarrow X$ is a map of schemes, then $f^*$ preserves $\otimes$ and the identity object. In other words,
$f^*\bOX = \bOY$, and the following diagram commutes
$$\xymatrix{
\bva(X)\times\bva(X) \ar[r]^-\otimes \ar[d]_{(f^*,f^*)} &
\bva(X) \ar[d]^{f^*} \\
\bva(Y)\times\bva(Y) \ar[r]_-\otimes & \bva(Y)
}$$
\end{enumerate}
\end{thm}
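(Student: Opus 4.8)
The plan is to follow the template of the proof of Theorem \ref{ds}, reducing every assertion to a corresponding strict property of the Kronecker product of matrices together with a compatibility check on the ordering $\pi$ from (\ref{pi}). As in that proof, two composite functors into $\bva(X)$ agree once they agree on objects and on morphisms; on objects it suffices to compare ranks and restriction maps, and since all modules of sections are standard free, each restriction map $E(g)$ of a standard vector bundle is encoded by a matrix $M_{E(g)}$ with respect to the standard bases. The key observation is that, by the definition of $\otimes$ via the ordering $\pi$, the restriction map of $E\otimes F$ is represented precisely by the Kronecker product $M_{E(g)}\otimes M_{F(g)}$, exactly as in the discussion of standard vector spaces in the introduction.

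For the object part of (\ref{satp}), both $(E\otimes F)\otimes G$ and $E\otimes(F\otimes G)$ have rank $rst$ at an object where $E,F,G$ have ranks $r,s,t$, and their restriction maps are represented by $(M_{E(g)}\otimes M_{F(g)})\otimes M_{G(g)}$ and $M_{E(g)}\otimes(M_{F(g)}\otimes M_{G(g)})$ respectively. These are equal by the strict associativity of the Kronecker product, so it remains only to confirm that the two iterated applications of $\pi$ produce the same lexicographic labeling of the triple tensor basis; this is the bookkeeping heart of the argument. The object parts of (2) and (\ref{commlb}) are handled the same way: tensoring with the rank-one bundle $\bOX$ (whose restriction matrix is the $1\times1$ identity) leaves both the rank and the matrix unchanged because $\pi$ degenerates to the identity ordering when a factor has rank one, giving $\bOX\otimes E = E\otimes\bOX = E$; and for a line bundle $L$ the matrix $M_{L(g)}$ is $1\times1$, so $M_{E(g)}\otimes M_{L(g)} = M_{L(g)}\otimes M_{E(g)}$, which together with the matching orderings yields $E\otimes L = L\otimes E$.

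For the morphism parts of (\ref{satp}), (2), and (\ref{commlb}), I would reproduce the large commutative diagram from the proof of Theorem \ref{ds}, with the isomorphism $\zeta$ of (\ref{shftp}) playing the role of $\omega$ and with the module-level tensor associativity (respectively unit, respectively symmetry-against-a-line) isomorphism of $\notimes$ playing the role of $\alpha$. The front face of this diagram commutes by naturality of these structural isomorphisms for $\notimes$ on sheaves, the slanted faces commute because $\zeta$ is built functorially from Lemma \ref{unitmap} and Proposition \ref{shffres}, and the top and bottom faces commute by the definition of $\otimes$ on morphisms; the back face then yields the desired equality of composites. Statement (\ref{ftp}) requires no diagram: since $f^*$ is defined by precomposition and acts as the identity on the underlying values, it commutes strictly with the standardization $S^\varphi_{H\cap K}$, with $\notimes$, and with $\xi$ (the last by Lemma \ref{xipb}), so $f^*(E\otimes F) = f^*E\otimes f^*F$ and $f^*\bOX = \bOY$ exactly as in Theorem \ref{ds}(3).

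The main obstacle is the ordering bookkeeping for $\pi$ in the associativity case: one must verify that forming $(M_A\otimes M_B)\otimes M_C$ via two nested applications of the lexicographic ordering yields the very same indexing of basis triples $e_a\otimes e_b\otimes e_c$ as $M_A\otimes(M_B\otimes M_C)$, so that the two standard vector bundles are literally equal and not merely isomorphic. Everything else is either an instance of the already-established strict properties of the Kronecker product or a routine diagram chase identical in shape to the one carried out for $\oplus$.
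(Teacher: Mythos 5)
Your proposal is correct and follows essentially the same route as the paper's own proof: reducing the object-level equalities to strict properties of the Kronecker product of matrices (associativity, triviality against $1\times1$ matrices), and handling morphisms by reproducing the cube diagram from Theorem \ref{ds} with $\zeta$ from (\ref{shftp}) in place of $\omega$ and the identities $\pi(\pi(u,v),w)=\pi(u,\pi(v,w))$ and $\pi(u,v)=\pi(v,u)$ (when a factor is one-dimensional) supplying the commutativity of the side faces. The ``ordering bookkeeping'' you flag as the main obstacle is exactly the fact the paper invokes for the left and right squares, so nothing essential is missing.
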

\begin{proof}
This theorem is analogous to the previous theorem on direct sums. So the idea of the proof is the same.
It is worth to note that if $(H,E)$ and $(K,F)$ are standard vector bundles, and $g$ is a morphism in $\cC_{H\cap K}$, then $(E\otimes F)(g)$ is represented by the tensor product of the matrices representing $E(g)$ and $F(g)$ where the tensor product of two matrices $A$ and $B$ is defined to be the following block matrix.
$$\left(\begin{matrix}
a_{11}B & a_{12}B & \cdots \\
a_{21}B & a_{22}B & \cdots \\
\vdots & \vdots& \ddots
\end{matrix}\right)$$
Therefore, object-wise, $0\otimes E=E\otimes 0=0$ since all involved matrices are empty matrices, (1) is true since $(A\otimes B)\otimes C = A\otimes(B\otimes C)$ for any matrices $A$,$B$, and $C$.
(2) is true since $\bOX(g)$ is the 1-by-1 matrix 1, (3) is true since for any standard line bundle $L$, $L(g)$ is a 1-by-1 matrix, and (4) is true since $f^*E$ is the same as $E$ everywhere it is defined.

To prove the equation of (2) on morphisms, suppose $\gamma:(H,E)\rightarrow (K,F)$ is a morphism, then there is a commutative diagram
$$\xymatrix@C=5pt{
\xi_H(\bOX\otimes E) \ar[rr]^{1\otimes\gamma} \ar[dd]_1 \ar[dr]_(.4)\zeta &&
\xi_{K}(\bOX\otimes F) \ar[dr]^\zeta \ar'[d]_(.7)1[dd] \\
&\xi_{H_X}(\bOX\notimes \xi_H E) \ar[rr]_(.4){\xi(1\notimes\gamma)} \ar[dl]_(.6)\mu&&
\xi_{H_X}(\bOX\notimes\xi_K F) \ar[dl]^\mu \\
\xi_H E \ar[rr]_{\gamma} &&
\xi_K F
}$$
In this diagram, $\zeta$ is the isomorphism (\ref{shftp}), which was derived from the isomorphism $\pi^{-1}$ where $\pi$ is the isomorphism defined by (\ref{pi}), and $\mu$ is the isomorphism derived by $\pi$. Therefore, the triangles commute. The top and the bottom squares commute by definition. Therefore, the back square commutes, and $1\otimes\gamma=\gamma$.
The commutativity of the diagram in (1) is proved similarly. If $(\gamma,\delta,\varepsilon)$ is a morphism of $\bva(X)\times\bva(X)\times\bva(X)$, then the following diagram similar to the diagram used in the proof of the previous theorem commutes.
$$\xymatrix@C=-10pt{
\xi((E\otimes F)\otimes G) \ar[rrr]^{(\gamma\otimes\delta)\otimes\varepsilon} \ar[dr]_\cong \ar[ddd]_1 
&&&
\xi((E'\otimes F')\otimes G') \ar[dr]^\cong \ar'[d][ddd]_1\\
& \xi(\xi(\xi E\notimes \xi F)\notimes \xi G) \ar[rrr]^(.47){\xi(\xi(\gamma\notimes\delta)\notimes\varepsilon)} \ar[ddd]_(.4)\alpha 
&&&
\xi(\xi(\xi E'\notimes \xi F')\notimes \xi G') \ar[ddd]^\alpha \\\\
\xi(E\otimes (F\otimes G)) \ar'[r][rrr]^(.22){\gamma\otimes(\delta\otimes\varepsilon)} \ar[dr]_\cong 
&&&
\xi(E'\otimes (F'\otimes G')) \ar[dr]^\cong \\
& \xi(\xi E\notimes \xi(\xi F \notimes \xi G)) \ar[rrr]_{\xi(\gamma\notimes\xi(\delta\notimes\varepsilon))}
&&&
\xi(\xi E'\notimes\xi(\xi F'\notimes \xi G'))
}$$
Note that for the commutativity of the left and the right squares, we use the fact that
$\pi(\pi(u,v),w) = \pi(u,\pi(v,w))$ for any three vectors $u,v,$ and $w$.
The property (3) follows from the commutativity of the next diagram.
$$\xymatrix{
\xi(E\otimes L) \ar[rr]^{\gamma\otimes\beta} \ar[dd]_1 \ar[dr]_\cong &&
\xi(E'\otimes L') \ar'[d][dd]_(.3)1 \ar[dr]^\cong \\
& \xi(\xi E\notimes\xi L) \ar[rr]^(.35){\xi(\gamma\notimes\beta)} \ar[dd]_(.3)\tau &&
\xi(\xi E'\notimes\xi L') \ar[dd]^\tau \\
\xi(L\otimes E) \ar'[r][rr]_(.25){\beta\otimes\gamma} \ar[dr]_\cong &&
\xi(L'\otimes E') \ar[dr]^\cong \\
& \xi(\xi L\notimes\xi E)\ar[rr]_{\xi(\beta\notimes\gamma)} &&
\xi(\xi L'\otimes\xi E')
}$$
For the commutativity of the left and the right squares,
we need the fact that $\pi(u,v)=\pi(v,u)$ if $u$ or $v$ is a 1-dimensional vector.
The property (5) follows from the fact that $f^*E$ and $f^*\gamma$ are the same as $E$ and $\gamma$ everywhere they are defined for any standard vector bundle $E$ and any map $\gamma$ of standard vector bundles.
\end{proof}

\begin{rmk}\label{notcommgen}
The reason the tensor product is not strictly commutative in general is that
for a commutative ring $A$, a choice needs to be made 
to define an isomorphism $A^r\otimes A^s \rightarrow A^{rs}$,
and no choice is symmetric unless $r\le1$ or $s\le1$.
For example, if $a, b, c$, and $d$ are elements of $A$, then
$(a, b)\otimes (c, d)=\left( \begin{array}{cc}
ac & ad\\
bc & bd
        \end{array}
\right)
$
and
$(c, d)\otimes (a, b)=\left( \begin{array}{cc}
ac & bc\\
ad & bd
\end{array}
\right)
$.
Thus, for an arbitrary isomorphism $f:A^r\otimes A^s \rightarrow A^{rs}$,
we cannot expect $f((a,b)\otimes(c,d))$ and $f((c,d)\otimes(a,b))$ to be
equal since $ad\ne bc$ in general.
\end{rmk}

\begin{thm}\label{thmvb}
Let $X$ be a scheme and $\bva(X)$ the category of standard vector bundles on $X$.
\begin{enumerate}
\item $\bva(X)$ is a small exact category.
\item Let $\vbundle{X}$ be the category of locally free $\OX$-modules of finite rank.
There are exact functors
$\Phi:\bva(X)\rightarrow \vbundle{X}$ and $\Psi:\vbundle{X}\rightarrow \bva(X)$
that are equivalences of categories.
\item \label{functvbitem} If $f:Y\rightarrow X$ is a map of schemes, then $f^*:\bva(X)\rightarrow \bva(Y)$ is an exact functor. If $g:Z\rightarrow Y$ is another map of schemes, then $g^*f^*=(fg)^*$ as functors
$\bva(X)\rightarrow \bva(Z)$. (It is an equality, not simply a natural isomorphism.)
\item \label{tpzero} The tensor product $\otimes:\bva(X)\times\bva(X)\rightarrow \bva(X)$ is a biexact pairing, in other words,
for any $E\in\bva(X)$, $0\otimes E=E\otimes0=0$, and if $\cS$ is a short exact sequence of $\bva(X)$, then so are $\cS\otimes E$ and $E\otimes \cS$.
\end{enumerate}
\end{thm}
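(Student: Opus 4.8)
The plan is to derive all four parts from the equivalence asserted in (2), so I would establish (2) first and then read off (1), (3), and (4). I would define $\Phi(H,E)=(\xi_HE)|_X$, the restriction of the associated sheaf to the small Zariski site of $X$, with $\Phi(\alpha)=\alpha|_X$ on morphisms; since $E$ is locally standard free, $(\xi_HE)|_X$ is locally free of finite rank, so $\Phi$ lands in $\vbundle{X}$. For $\Psi$ I would use Corollary \ref{stdvb}: after choosing a trivializing sieve $H$ and data $\varphi$, set $\Psi(\cE)=S_H^\varphi B\cE|_H$, and send a morphism $\phi:\cE\to\cF$ to the sheaf map $\gamma_\cF^{-1}\circ B\phi\circ\gamma_\cE:\xi_H\Psi(\cE)\to\xi_{H'}\Psi(\cF)$ built from the isomorphisms $\gamma$ of that corollary. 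Then $\Phi\Psi\cong\mathrm{id}$ because $\xi_HS_H^\varphi B\cE|_H\cong B\cE$ (Corollary \ref{stdvb}) and $(B\cE)|_X\cong\cE$ (Lemma \ref{extres}(1) with $f=\mathrm{id}_X$); and $\Psi\Phi\cong\mathrm{id}$ because the associated sheaf of $\Psi\Phi(H,E)$ is isomorphic to $B((\xi_HE)|_X)$, while Lemma \ref{Bsmall} supplies the hypothesis of Lemma \ref{resext}, giving $B((\xi_HE)|_X)\cong\xi_HE$. As morphisms in $\bva(X)$ are by definition morphisms of associated sheaves, this is an isomorphism in $\bva(X)$.

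With the equivalence in hand, I would equip $\bva(X)$ with the exact structure in which a sequence is a short exact sequence precisely when its image under $\Phi$ is a short exact sequence of locally free sheaves. Since $\vbundle{X}$ is an exact category---being a full, extension-closed subcategory of the abelian category of $\cO_X$-modules---this transports along the equivalence to make $\bva(X)$ exact and to make $\Phi$ and $\Psi$ exact functors, which proves (1) and completes (2). Smallness of $\bva(X)$ is immediate from the standing restriction to the small category $\schemes$ fixed at the outset.

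For (3), the identity $g^*f^*=(fg)^*$ is exactly Proposition \ref{functvb}, so only exactness of $f^*$ remains, and for (4) the vanishing $0\otimes E=E\otimes0=0$ is the rank computation already noted in the proof of Theorem \ref{tp}. Both remaining claims I would reduce to classical facts by transporting the operations through $\Phi$. For pullback, $\Phi(f^*(H,E))=(\xi_{f^*H}f^*E)|_Y=(f^*\xi_HE)|_Y$ by Lemma \ref{xipb}, and $(f^*\xi_HE)|_Y=(\xi_HE)|_Y\cong f^*((\xi_HE)|_X)=f^*\Phi(H,E)$ by Lemma \ref{pbres} and Lemma \ref{Bsmall}; thus $\Phi f^*\cong f^*\Phi$ with $f^*$ on the right the ordinary pullback of locally free sheaves, which is exact since a short exact sequence of vector bundles is locally split. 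For tensor, the isomorphism $\zeta$ of (\ref{shftp}), followed by restriction to $X$ via Proposition \ref{shffresii} and sheafification, identifies $\Phi(E\otimes F)$ with $\Phi E\otimes_{\cO_X}\Phi F$; as locally free sheaves are flat, $-\otimes F$ and $E\otimes-$ are exact, giving biexactness. Since $\Phi$ both preserves and reflects exactness, these conclusions pass back to $\bva(X)$.

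The main obstacle I anticipate is not a single hard step but the bookkeeping needed to verify that $\Phi$ genuinely intertwines pullback and tensor product with their classical counterparts on $\vbundle{X}$, naturally in all variables. Each intertwining isomorphism must be assembled from the compatibility results (Lemmas \ref{xipb}, \ref{Bsmall}, \ref{pbres}, Propositions \ref{shffres}, \ref{shffresii}, and the isomorphism $\zeta$), and the care lies in checking naturality, so that exactness can honestly be imported from the well-understood exact category of vector bundles instead of being reproved by hand.
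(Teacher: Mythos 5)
Your proposal matches the paper's proof in all essentials: the same definitions of $\Phi=(\xi_H\,\cdot\,)|_X$ and $\Psi=S_H^\varphi B(\cdot)|_H$, the same use of Corollary \ref{stdvb}, Lemmas \ref{extres}, \ref{resext}, \ref{Bsmall} for the two natural isomorphisms $\Phi\Psi\cong\mathrm{id}$ and $\Psi\Phi\cong\mathrm{id}$, the same transport of the exact structure along the equivalence, and the same reductions of (3) and (4) to classical exactness of pullback and biexactness of tensor product on $\vbundle{X}$ via Lemmas \ref{xipb}, \ref{pbres} and the isomorphism $\zeta$ with Proposition \ref{shffresii}. The only point worth tightening is smallness: it is not just that $\schemes$ is small, but that the values of a standard vector bundle are required to be standard free modules, which is what bounds the collection of such presheaves to a set.
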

\begin{proof}
The category $\bva(X)$ is small because $\ZAR{X}$ is small and the values of a standard vector bundle at objects are standard free modules. It will be shown to be an exact category later.

Define a functor $\Phi:\bva(X)\rightarrow \vbundle{X}$ as follows.
Suppose $E$ is a standard vector bundle on $X$ with the associated sieve $H$.
Define $\Phi E=\xi_H E|_X$, the restriction of the sheafification of $E$ to the small Zariski site of $X$.
There is a Zariski covering $\{U_i\xrightarrow{f_i} X\}$ with $f_i$ in $\cC_H$ since $H$ belongs to $\ZAR{X}$. For each $i$, we have an isomorphism $\bO_{U_i}^n\rightarrow f_i^*E$. Restricting it to the small Zariski site of $U_i$, we get an isomorphism $\cO_{U_i}^n\rightarrow (f_i^*E)|_{U_i} = E|_{U_i}$.
Applying the sheafification functor, we get an isomorphism $\cO_{U_i}^n\rightarrow \xi(E|_{U_i})$. By Proposition \ref{shffresii}, $\Phi E|_{U_i}=(\xi_H E|_X)|_{U_i} = \xi(E|_{U_i})$ Hence, $\Phi E$ is indeed a locally free sheaf.
If $(K,F)$ is another standard vector bundle on $X$ and $\alpha:(H,E)\rightarrow (K,F)$ is a morphism, that is, a map $\alpha:\xi_H E\rightarrow \xi_K F$ of sheaves on $\ZAR{X}$, then $\Phi(\alpha)$ is defined to be the induced map
$\alpha|_X:\xi_H E|_X \rightarrow \xi_K F|_X$ of sheaves on $X$. This assignment respects the composition of morphisms since the restriction $-|_X$ is a functor. Hence $\Phi$ is a functor.

Next, we define the inverse $\Psi:\vbundle{X}\rightarrow \bva(X)$.
If $\cE$ is a locally free sheaf, then define $\Psi\cE=S_H^\varphi B\cE|_H$.
(See Lemma \ref{stdize} and Corollary \ref{stdvb}.)
Note that a choice of a sieve $H$ and a collection of isomorphisms $\varphi$ needs to be made for each $\cE$.
Suppose that $\cF$ is another locally free sheaf and that $\beta:\cE \rightarrow \cF$ is a map of sheaves.
If the sieve and the isomorphisms associated to $\cF$ are $K$ and $\psi$, then $\Psi \cF = S_K^\psi B\cE|_K$,
and there are isomorphisms of sheaves $\gamma_\cE:\xi_H\Psi\cE \rightarrow B\cE$ and $\gamma_\cF:\xi_K\Psi \cF \rightarrow B\cF$ by Corollary \ref{stdvb}.
Using them, define $\Psi\beta=\gamma_\cF^{-1}(B\beta)\gamma_\cE$.
$$\Psi\beta:\xi_H\Psi \cE\xrightarrow {\gamma_\cE}B\cE \xrightarrow{B\beta} B\cF \xrightarrow {\gamma_\cF^{-1}}\xi_K\Psi \cF$$
It was defined in such a way that the following diagram commutes, so that we may identify the sheafification of $\Psi\cE$ with $B\cE$ intrinsically without reference to the choice of $H$ and $\varphi$.
\begin{equation}\label{psimor}
\xymatrix{
\xi_H\Psi\cE \ar[r]^{\gamma_\cE} \ar[d]_{\Psi\alpha} &
B\cE \ar[d]^{B\alpha} \\
\xi_K\Psi\cF \ar[r]_{\gamma_\cF} &
B\cF
}
\end{equation}
This assignment respects the composition of morphisms since $B$ is a functor.
Therefore, $\Psi$ is a functor.

Now we prove that $\Phi$ and $\Psi$ are inverses to each other.
Suppose $\cE\in \vbundle{X}$. By Lemma \ref{extres}, there is an isomorphism
$$\Phi\Psi\cE=(\xi_H\Psi\cE)|_X\xrightarrow[\cong]{\gamma_\cE|_X} B\cE|_X \cong 1_X^*\cE \cong \cE.$$
This isomorphism is natural in $\cE$ since for a morphism $\alpha:\cE\rightarrow \cF$ of $\vbundle{X}$,
the following diagram commutes. (The left square commutes by definition, the middle by (\ref{psimor}), and the right by the naturality of the isomorphism of Lemma \ref{extres}.)
$$\xymatrix{
\Phi\Psi\cE \ar@{=}[r] \ar[d]_{\Phi\Psi\alpha} &
\xi_H\Psi\cE|_X \ar[r]^{\gamma_\cE|_X} \ar[d]_{(\Psi\alpha)|_X} &
B\cE|_X \ar[r]^\cong \ar[d]^{(B\alpha)|_X} & \cE \ar[d]^{\alpha} \\
\Phi\Psi\cF \ar@{=}[r] &
\xi_H\Psi\cF|_X \ar[r]_{\gamma_\cF|_X} &
B\cF|_X \ar[r]_\cong & \cF \\
}$$
Therefore, $\Phi\Psi$ is naturally isomorphic to the identity functor on $\vbundle{X}$.
Conversely, suppose $E$ is an $H$-vector bundle, and suppose $\Psi\Phi E$ turns out to be a $K$-vector bundle.
The isomorphism
$$\gamma_{\Phi \cE}:\xi_K\Psi\Phi E \rightarrow B\Phi E = B(\xi_HE|_X)$$
of Corollary \ref{stdvb} is natural in $E$ by the diagram (\ref{psimor}).
In addition, there is a natural isomorphism $B(\xi_HE|_X)\cong \xi_H E$ by Lemma \ref{resext} and Lemma \ref{Bsmall}.
Composing them together, we obtain a natural isomorphism $\xi_K\Psi\Phi E\cong\xi_H E$. Therefore, $\Psi\Phi$ is naturally isomorphic to the identity functor. This proves the equivalence of $\vbundle{X}$ and $\bva(X)$.

The category $\bva(X)$ of standard vector bundles is additive with $\oplus$ as the biproduct operation.
The functors $\Phi$ and $\Psi$ are additive since
$$\begin{array}{c}
\Phi(E\oplus F) = \xi_{H\cap K} (E\oplus F)|_X \cong \xi_H E|_X\oplus \xi_K F|_X=\Phi E\oplus \Phi F,\\
\Psi(\cE\oplus \cF) \cong B(\cE\oplus \cF) \cong B\cE \oplus B\cF \cong \Psi\cE \oplus \Psi\cF,
\end{array}$$
and projections and injections are preserved.
The category $\vbundle{X}$ is well known to be an exact category, (as a full subcategory of the abelian category of $\OX$-modules closed under extensions,)
and  $\bva(X)$ is equivalent to $\vbundle{X}$.
Therefore, $\bva(X)$ can be given the structure of an exact category such that the equivalences $\Phi$ and $\Psi$ become exact functors by transporting the notion of exactness from $\vbundle{X}$ to $\bva(X)$, that is, a sequence $0\rightarrow E\rightarrow F\rightarrow G \rightarrow 0$ of standard vector bundles is defined to be exact if and only if  $0\rightarrow \Phi E\rightarrow \Phi F \rightarrow \Phi G \rightarrow 0$ is exact.

To prove the third property of the theorem, suppose $f:Y\rightarrow X$ is a map of schemes. For any standard vector bundle $(H,E)$ in $\bva(X)$, we have a natural isomorphism $$\Phi f^*E = \xi_{f^*H} f^*E|_Y = f^*\xi_HE|_Y=\xi_H E|_Y \xleftarrow\cong f^*(\xi_HE|_X)=f^*\Phi E$$
by the definition of $\Phi$, Lemma \ref{xipb}, and Lemma \ref{pbres}.
If $0\rightarrow E\rightarrow F\rightarrow G\rightarrow 0$ is a short exact sequence in $\bva(X)$, then the sequence $0\rightarrow \Phi E\rightarrow \Phi F \rightarrow \Phi G\rightarrow 0$ is exact. Hence $0\rightarrow f^*\Phi E\rightarrow f^*\Phi F\rightarrow f^*\Phi G\rightarrow 0$ is exact, and so is $0\rightarrow \Phi f^* E\rightarrow \Phi f^*F\rightarrow \Phi f^*G\rightarrow 0$. Therefore,
$0\rightarrow f^*E\rightarrow f^*F\rightarrow f^*G\rightarrow 0$ is an exact sequence in $\bva(Y)$. This proves that $f^*:\bva(X)\rightarrow \bva(Y)$ is an exact functor. If $g:Z\rightarrow Y$ is another map of schemes, then $(fg)^*=g^*f^*$ by Proposition \ref{functvb}.

Finally, we prove that $\otimes$ is biexact.
First note that for any any scheme $Y$ over $X$ and any standard vector bundles $(H,E)$ and $(K,F)$ on $X$,
there is an isomorphisms.
\begin{align}
\label{shftpsmi} \xi_{H\cap K}(E\otimes F)|_Y &\cong \xi_{H_X}(\xi_HE\notimes \xi_KF)|_Y \\
\label{shftpsmii} &= \xi((\xi_HE\notimes \xi_KF)|_Y) \\
\label{shftpsmiii}&= \xi(\xi_HE|_Y \notimes \xi_KF|_Y) \\
\label{shftpsmiv} &= \xi_HE|_Y\otimes_{\OY} \xi_KF|_Y
\end{align}
We used the isomorphism (\ref{shftp}) for (\ref{shftpsmi}), Proposition \ref{shffresii} for (\ref{shftpsmii}) and the definition of the tensor product of $\OY$-modules for (\ref{shftpsmiv}). If $\alpha:(H,E)\rightarrow (H',E')$ and $\beta:(K,F)\rightarrow (K',F')$ are maps of standard vector bundles, then the map $(\alpha\otimes\beta)|_Y$ corresponds to the map $\alpha|_Y\otimes\beta|_Y$.
Suppose $\cS$ is a short exact sequence below,
$$0\rightarrow(H,E)\xrightarrow \alpha (K,F)\xrightarrow \beta (L,G) \rightarrow 0$$
and $(M,D)$ is a standard vector bundle. It is enough to prove the following sequence $D\otimes \cS$ is exact, the other being similar.
$$0\rightarrow \xi_{M\cap H}(D\otimes E)\xrightarrow{1\otimes \alpha} \xi_{M\cap K}(D\otimes F)\xrightarrow{1\otimes\beta} \xi_{M\cap L}(D\otimes G) \rightarrow 0$$
By the definition of exactness for standard vector bundles, we need to prove that the following sequence of $\OX$-modules is exact.
$$0\rightarrow \xi_{M\cap H}(D\otimes E)|_X\xrightarrow{(1\otimes \alpha)|_X} \xi_{M\cap K}(D\otimes F)|_X\xrightarrow{(1\otimes\beta)|_X} \xi_{M\cap L}(D\otimes G)|_X \rightarrow 0$$
But it is isomorphic to the sequence
$$0\rightarrow \xi_MD|_X\otimes \xi_HE|_X\xrightarrow{1\otimes \alpha|_X} \xi_MD|_X\otimes \xi_KF|_X\xrightarrow{1\otimes\beta|_X} \xi_MD|_X\otimes \xi_LG|_X \rightarrow 0,$$
which is an exact sequence of locally free $\OX$-modules since $\otimes$  is a biexact pairing on the category of locally free $\OX$-modules.

\end{proof}

\subsection{Twisted sheaf as a standard line bundle}\label{twshlb}
In this section, we discuss the twisted sheaf $\cO(n)$ on a projective space.
There could be many standard vector bundles that correspond to $\cO(n)$.
But there is a particular one that behaves well under pullbacks and base change.
In Theorem \ref{thmvb}, the way we constructed a standard vector bundle from an ordinary vector bundle was to use Corollary \ref{stdvb} after choosing a covering that trivializes the vector bundle and an isomorphism to a standard free module for each scheme factoring through one of the open covers. We will show how the choices can be made universally for $\cO(n)$.

Let $\PP^r_X = X\times_\ZZ\Proj \ZZ[x_0,x_1,\ldots,x_r].$
It is covered by $U_0,U_1,\ldots,U_r$ where
$$U_k=X\times_\ZZ\Spec \ZZ\left[\frac{x_0}{x_k},\frac{x_1}{x_k},
\ldots,\widehat{\frac{x_k}{x_k}},\ldots,\frac{x_r}{x_k}\right]
\quad \mbox{for}\quad k=0,1,\ldots, r. $$
Let $i_k:U_k\rightarrow \PP^r_X$ be the inclusions, and let $H$ be the sieve generated by them.
For each $k$, there is a map $x_k^n:\cO_{U_k}\rightarrow i_k^*\cO_{\PP^r_X}(n)$
of sheaves on $\zar{(U_k)}$ defined by multiplication by $x_k^n$. It is an isomorphism because $x_k$ is invertible in $U_k$.
Suppose $Y\xrightarrow h \PP^r_X$ is an object of $\cC_H$. Then $h$ factors as
$$Y\xrightarrow {h_k} U_k\xrightarrow {i_k} \PP^r_X$$
for some $k$. We choose the largest such $k$. (One could make many different choices here, but our choice is made for Proposition \ref{oninf} to work.) Then there is an isomorphism
$$\alpha_h:\cO_Y\cong h_k^*\cO_{U_k}\xrightarrow[\cong] {h_k^*(x_k^n)} h_k^*i_k^*\cO_{\PP^r_X}(n)\xrightarrow[\cong]{} h^*\cO_{\PP^r_X}(n) $$
Define $\varphi_h$ to be the following composite map as in Corollary \ref{stdvb}.
$$\bOY \cong B\OY \xrightarrow[\cong]{B\alpha_h} Bh^*\cO_{\PP^r_X}(n) \xrightarrow[\cong]{} h^*(B\cO_{\PP^r_X}(n)|_H).$$
With these choices of $H$ and $\varphi_h$'s, 
define $\cO_{\PP^r_X}(n)=S_H^\varphi B\cO_{\PP^r_X}(n)|_H$. 
This is a standard line bundle.

\begin{prop}\label{oninf}
Suppose $i_{\infty}:X\rightarrow \PP^1_X$ is the inclusion of the point $\infty=[0:1]$. Then $i_\infty^*\cO_{\PP^1_X}(n)=\cO_X$ for any $n$.
\end{prop}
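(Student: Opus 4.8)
The plan is to show directly that the two standard line bundles $i_\infty^*\cO_{\PP^1_X}(n)$ and $\cO_X$ coincide as $H$-vector bundles, i.e. as presheaves together with their associated sieves. Since both are standard of rank $1$, their sections over every object are forced to be standard free modules of rank one, so it suffices to verify three things: that the associated sieves agree, that the modules of sections agree, and that the restriction maps agree. First I would identify the sieve. By definition $i_\infty^*H$ consists of those $a\colon Z\rightarrow X$ for which $i_\infty a$ lies in $H$. Because $\infty=[0:1]$ lies in $U_1$ and not in $U_0$, the section $i_\infty$ factors through the open immersion $i_1\colon U_1\rightarrow\PP^1_X$, hence so does every composite $i_\infty a$; therefore $i_\infty a\in H$ for all $a$, giving $i_\infty^*H=H_X$, the full sieve on $X$, which is exactly the sieve of $\cO_X$.

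Next, from the construction $\cO_{\PP^1_X}(n)=S_H^\varphi B\cO_{\PP^1_X}(n)|_H$ and Lemma~\ref{stdize}, each value $i_\infty^*\cO_{\PP^1_X}(n)(\smap Za)=\cO_{\PP^1_X}(n)(\smap Z{i_\infty a})$ equals $\OZ(Z)$, matching $\cO_X$. The crux is the restriction maps, and here the key observation is uniformity of the chart: for every object $\smap Za$ of $\cC_{H_X}$ the composite $i_\infty a$ factors through $U_1$ but not through $U_0$, since its image lies in $\{x_0=0\}$, which is disjoint from $U_0$. Consequently the convention of choosing the \emph{largest} index $k$ in the definition of $\alpha_h$ and $\varphi_h$ always selects $k=1$ and the single trivializing section $x_1^n$. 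This is precisely the reason that convention was adopted, and it guarantees that no transition between the two charts $U_0,U_1$ ever enters the computation.

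Finally I would trace a restriction map. For a morphism $h\colon W\rightarrow Z$ over $X$, the map $i_\infty^*\cO_{\PP^1_X}(n)(h)$ is the composite
\begin{multline*}
\OZ(Z)\xrightarrow{\varphi_{i_\infty a}(Z)}(B\cO_{\PP^1_X}(n)|_H)(\smap Z{i_\infty a})\\
\xrightarrow{(B\cO_{\PP^1_X}(n)|_H)(h)}(B\cO_{\PP^1_X}(n)|_H)(\smap W{i_\infty ah})\xrightarrow{\varphi_{i_\infty ah}^{-1}(W)}\OW(W).
\end{multline*}
By construction $\varphi_{i_\infty a}(Z)$ sends $1$ to the pullback of $x_1^n$ along $i_\infty a$, and $\varphi_{i_\infty ah}(W)$ sends $1$ to the pullback of $x_1^n$ along $i_\infty ah$. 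Since both factorizations through the open immersion $i_1$ are compatible, $(i_\infty ah)_1=(i_\infty a)_1 h$ by uniqueness of factorization, so the middle map carries the first section to the second. Hence the composite sends $1\mapsto 1$; that is, it is represented by the $1\times1$ identity matrix, exactly as the restriction maps of $\cO_X$ are. With all three pieces of data agreeing, $i_\infty^*\cO_{\PP^1_X}(n)=\cO_X$.

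The step I expect to be the main obstacle is verifying the compatibility of the chosen trivializations under restriction, namely $(i_\infty ah)_1=(i_\infty a)_1 h$ together with the resulting equality of the pulled-back sections of $x_1^n$. All the genuine content of the statement is concentrated in the fact that the ``largest $k$'' convention forces $k=1$ uniformly across every object, so that a single global trivialization governs everything and every transition function is trivial; once that is pinned down, the equality of restriction maps is the straightforward unwinding above.
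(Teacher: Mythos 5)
Your proposal is correct and follows essentially the same route as the paper's proof: identify $i_\infty^*H=H_X$ via the factorization through $U_1$, then observe that the largest-$k$ convention makes the trivialization uniformly $x_1^n$, so the pullback bundle coincides with $\cO_X$. The paper states this tersely ("defined uniformly by the isomorphism $x_1^n$"), whereas you unwind the restriction maps of $S_H^\varphi B\cO_{\PP^1_X}(n)|_H$ explicitly to verify they send $1\mapsto 1$; that is a fuller writing-out of the same argument, not a different one.
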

\begin{proof}
The projective line $\PP^1_X$ is covered by two affine lines $U_0$ and $U_1$ as above.
Let $H$ be the sieve generated by them. For any scheme $Y$ over $X$, the composite map $Y\rightarrow X\xrightarrow {i_{\infty}} \PP^1_X$ factors through $U_1$.
Hence $i_\infty^*H=H_X$, and $i_\infty^*\cO_{\PP^1_X}(n)$ is defined uniformly by the isomorphism $x_1^n$ identifying $\cO_{U_1}$ with $i_1^*\cO_{\PP^1_X}(n)$. Therefore, $i_{\infty}^*\cO_{\PP^1_X}(n)=\cO_X$.
\end{proof}
\begin{prop}\label{onfunc}
Suppose $f:Y\rightarrow X$ be a map of schemes. Let $g:\PP_Y^r\rightarrow \PP_X^r$ be the induced map $f\times1$. Then $g^*\bO_{\PP_X^r}(n) = \bO_{\PP_Y^r}(n)$ (equality, not simply natural isomorphism) for every $n$.
\end{prop}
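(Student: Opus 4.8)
The plan is to check the strict equality straight from the definition of a standard vector bundle as a pair. Writing $\bO_{\PP^r_X}(n)=(H,E)$ with $E=S_H^\varphi B\cO_{\PP^r_X}(n)|_H$, and letting $(H_Y,E')$ be the line bundle produced by the identical recipe on $\PP^r_Y$ (with its own charts $U_0^Y,\ldots,U_r^Y$, sieve $H_Y$ generated by the inclusions $i_k^Y$, and isomorphisms $\varphi'$), I have $g^*\bO_{\PP^r_X}(n)=(g^*H,g^*E)$ and $\bO_{\PP^r_Y}(n)=(H_Y,E')$. So two things must be shown: that the sieves $g^*H$ and $H_Y$ coincide, and that the $H_Y$-presheaves $g^*E$ and $E'$ coincide on objects and on morphisms. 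Everything rests on one geometric fact about $g=f\times 1$: because $g$ alters only the base and fixes the projective coordinates, it satisfies $g^{-1}(U_k^X)=U_k^Y$, restricts to maps $g_k:U_k^Y\rightarrow U_k^X$, and fixes each coordinate, $g_k^*x_k=x_k$.

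For the sieves, an object $Z\xrightarrow p\PP^r_Y$ lies in $\cC_{g^*H}$ exactly when $Z\xrightarrow{gp}\PP^r_X$ factors through some $i_k^X$; by $g^{-1}(U_k^X)=U_k^Y$ this holds iff $p$ factors through $i_k^Y$, i.e. iff $p\in\cC_{H_Y}$. Hence $g^*H=H_Y$. The presheaf values then match at once: for any $Z\xrightarrow p\PP^r_Y$ in $\cC_{H_Y}$, both $g^*E(Z)=E(Z\xrightarrow{gp}\PP^r_X)$ and $E'(Z)$ equal $\cO_Z(Z)$, since the value of a standardized presheaf at an object depends only on its rank, which is $1$ on both sides.

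The real work is the agreement of restriction maps. By the definition of the pullback presheaf, $g^*E$ acts on a morphism $q:W\rightarrow Z$ of $\cC_{H_Y}$ exactly as $E$ does, and by Lemma \ref{stdize} this is the composite $\varphi_{\zeta q}^{-1}(W)\circ B\cO_{\PP^r_X}(n)(q)\circ\varphi_{\zeta}(Z)$, where $\zeta=gp$ is the structure map of $Z$ to $\PP^r_X$; the parallel formula with $\varphi'$ and $B\cO_{\PP^r_Y}(n)$ computes $E'(q)$. I would prove the two composites are the same map $\cO_Z(Z)\rightarrow\cO_W(W)$ by matching them stage by stage under the canonical identification $Bg^*\cong g^*B$ of Lemma \ref{Bf}, which together with the canonical isomorphism $g^*\cO_{\PP^r_X}(n)\cong\cO_{\PP^r_Y}(n)$ carries $B\cO_{\PP^r_X}(n)(q)$ to $B\cO_{\PP^r_Y}(n)(q)$. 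The reason the end isomorphisms $\varphi$ and $\varphi'$ correspond is that both are assembled by the same deterministic rule---restrict to the \emph{largest} chart $U_k$ through which the structure map factors and multiply by $x_k^n$---and this rule commutes with $g$: by the chart compatibility above, the largest chart for $Z\xrightarrow{gp}\PP^r_X$ has the same index $k$ as the largest chart for $Z\xrightarrow p\PP^r_Y$, and $g_k^*x_k^n=x_k^n$ identifies the generators $\alpha$ accordingly.

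I expect the main obstacle to be precisely this bookkeeping, which is where the ``largest $k$'' convention earns its keep. The delicate case is when the largest chart of the target $W$ carries a different index $k'$ than that of the source $Z$; then each composite absorbs a transition factor of the shape $x_k^n/x_{k'}^n$, and I must check it is literally the same element of $\cO_W(W)$ on the $\PP^r_X$ and $\PP^r_Y$ sides. This is forced by the two facts already isolated: the indices $k$ and $k'$ are selected identically on both sides, and $g$ fixes every $x_i$, so the coordinate expression of the transition factor is unchanged. With the stagewise correspondence in hand, the two composites coincide, giving $g^*E=E'$ and therefore the strict equality $g^*\bO_{\PP^r_X}(n)=\bO_{\PP^r_Y}(n)$.
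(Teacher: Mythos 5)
Your proposal is correct and follows essentially the same route as the paper's proof: both rest on the two facts that $U_{Y,k}=U_{X,k}\times_{\PP_X^r}\PP_Y^r$ forces $g^*H=K$, and that the standardizing isomorphisms on both sides are multiplication by $x_k^n$ with the same (largest) index $k$ since $g$ fixes the coordinates. The paper states the second point in one sentence where you spell out the stagewise matching of restriction maps and transition factors, but this is elaboration of the same argument, not a different one.
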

\begin{proof}
Let $H$ be the sieve on $\PP_X^r$ generated by the covering $\{U_{X,k}\rightarrow \PP_X^r\}$ described above,
and let $K$ be the sieve on $\PP_Y^r$ generated by the analogous covering
$\{U_{Y,k}\rightarrow \PP_Y^r\}$  of $\PP_Y^r$.
For each $0\le k\le r$, a map $h:Z\rightarrow \PP_Y^r$ factors through $U_{Y,k}$ if and only if the composite $gh$ factors through $U_{X,k}$ since $U_{Y,k}\cong U_{X,k}\times_{\PP_X^r}\PP_Y^r$.
$$\xymatrix{
Z\ar[dr] \ar[ddr]_h \ar@{-->}[drr] & & & Z\ar@{-->}[dr]\ar[ddr]_h \ar[drr] \\
& U_{Y,k} \ar[d] \ar[r] & U_{X,k}\ar[d]& & U_{Y,k} \ar[d] \ar[r] & U_{X,k}\ar[d]\\
& \PP_Y^r \ar[r]_g & \PP_X^r & & \PP_Y^r \ar[r]_g & \PP_X^r\\
}$$
Therefore, $g^*H=K$.
Moreover, for each object $h:Z\rightarrow \PP_Y^r$ of $\cC_K$,
the standardizing maps $\varphi_h$ for $\cO_{\PP_Y^r}(n)(Z)$ and $g^*\cO_{\PP_X^r}(n)(Z)$ are defined by multiplication by $x_k^n$, both with the same $k$. Therefore, $g^*\bO_{\PP_X^r}(n)=\bO_{\PP_Y^r}(n)$ as standard line bundles.
\end{proof}

\bibliographystyle{plain}
\bibliography{SVB}

\begin{thebibliography}{1}

\bibitem{SGAIII}
M.~Artin, J.~E. Bertin, M.~Demazure, P.~Gabriel, A.~Grothendieck, M.~Raynaud,
  and J.-P. Serre.
\newblock {\em Sch\'emas en groupes. {F}asc. 1: {E}xpos\'es 1 \`a 4}, volume
  1963 of {\em S\'eminaire de G\'eom\'etrie Alg\'ebrique de l'Institut des
  Hautes \'Etudes Scientifiques}.
\newblock Institut des Hautes \'Etudes Scientifiques, Paris, 1963/1964.

\bibitem{FAG}
Barbara Fantechi, Lothar G{\"o}ttsche, Luc Illusie, Steven~L. Kleiman, Nitin
  Nitsure, and Angelo Vistoli.
\newblock {\em Fundamental algebraic geometry}, volume 123 of {\em Mathematical
  Surveys and Monographs}.
\newblock American Mathematical Society, Providence, RI, 2005.
\newblock Grothendieck's FGA explained.

\bibitem{FS}
Eric~M. Friedlander and Andrei Suslin.
\newblock The spectral sequence relating algebraic {$K$}-theory to motivic
  cohomology.
\newblock {\em Ann. Sci. \'Ecole Norm. Sup. (4)}, 35(6):773--875, 2002.

\bibitem{G}
Daniel~R. Grayson.
\newblock Weight filtrations via commuting automorphisms.
\newblock {\em $K$-Theory}, 9(2):139--172, 1995.

\bibitem{Kassel}
Christian Kassel.
\newblock {\em Quantum groups}, volume 155 of {\em Graduate Texts in
  Mathematics}.
\newblock Springer-Verlag, New York, 1995.

\bibitem{Schauenburg}
Peter Schauenburg.
\newblock Turning monoidal categories into strict ones.
\newblock {\em New York J. Math.}, 7:257--265 (electronic), 2001.

\end{thebibliography}

\end{document}